\titleformat{\subsection}[runin]
  {\normalfont\normalsize\bfseries\centering}
  {\thesubsection}{5pt}{}[.]
\titleformat{\subsubsection}[runin]
  {\normalfont\normalsize\bfseries\centering}
  {\thesubsubsection}{5pt}{}[.]
\small\contentslabel{1.5em}}
\small\contentslabel{2.5em}}
\newtheorem{thm}{Theorem}[section]
\newtheorem{prop}[thm]{Proposition}
\newtheorem{lem}[thm]{Lemma}
\newtheorem{cor}[thm]{Corollary}
\theoremstyle{definition}
\newtheorem{rem}[thm]{Remark}
\newtheorem{dfn}[thm]{Definition}
\numberwithin{equation}{section}
\DeclareRobustCommand{\pmod}{\@ifstar\@pmods\@@pmod}
\def\@pmods#1{\mkern4mu({\operator@font mod}\mkern 6mu#1)}
\newcommand{\matrica}[1]
	{\left(\begin{array}{cc} #1 \end{array} \right)}
\newcommand\tm{\times}
\newcommand\otm{\otimes}
\newcommand{\N}{\mathbb{N}}
\newcommand{\Z}{\mathbb{Z}}
\newcommand{\RR}{\mathbb{R}}
\newcommand{\CC}{\mathbb{C}}
\newcommand{\KK}{\mathbb{K}}
\newcommand{\ra}{\longrightarrow}		% strelica nadedsno (za displayed equations)
\newcommand{\hra}{\hookrightarrow}		% injektivna strelica nadesno
\newcommand{\hla}{\hookleftarrow}		% injektivna strelica nalevo
\newcommand{\sra}{\twoheadrightarrow}	% surjektivna strelica nalevo
\newcommand{\xra}[1]{\xhookrightarrow{#1}}
\newcommand{\xla}[1]{\xhookleftarrow{#1}}
\newcommand{\calC}{\mathcal{C}} 		% proizv. kategorija
\renewcommand{\O}{\mathbb{O}}		% objekat neke kategorije C
\newcommand{\1}{\mathbb{I}}		% jedinica kao objekat
\newcommand\mj{{\bf 1}} 		% jedinica kao automorfizam
\renewcommand{\b}{\beta}
\newcommand{\be}{{\beta\unit}}
\newcommand{\g}{\gamma}
\newcommand{\Cyl}{\mathrm{Cyl}}		% mapping cylinder
\newcommand{\unit}{\varepsilon}		% jedinica
\newcommand{\counit}{\eta}			% kojedinica
\newcommand{\swap}{\mathrm{sw}}		% swap map
\newcommand{\Cob}{3\mathsf{Cob}}			% kategorija 3Cob
\renewcommand{\Vec}{\mathsf{Vec}}				% kategorija kompleksnih vekt.prostora
\newcommand{\MCG}{\mathrm{MCG}}				% mapping class group
\newcommand{\bMCG}{\b\mathsf{MCG}}			% kategorija generisana sa beta (pairing) i MCG
\newcommand{\beMCG}{\b\unit\mathsf{MCG}}	% kategorija generisana sa beta (pairing) epsilon (unit) i MCG
\newcommand{\SL}{SL(2,\Z)}			% grupa matrica det=1
\newcommand{\GL}{GL(2,\Z)}			% grupa invetrtibilnih matrica, dakle det=\pm1
\newcommand{\Mat}{Mat}				% the set of matrices
\newcommand{\tr}{\mathrm{tr}}		% trace
\newcommand{\Aut}{\mathrm{Aut}}
\newcommand{\wt}{\widetilde}
\newcommand{\wh}{\widehat}
\title{Restricted (2+1)-TQFTs supported by thickened and solid tori}
\author[D. {\DJ}or{\dj}evi\'c]{Du\v{s}an {\DJ}or{\dj}evi\'c}
\address{University of Belgrade, Faculty of Physics, Studentski trg 12, 11000 Belgrade, Serbia}
\email{dusan.djordjevic@ff.bg.ac.rs}
\author[D. Kosanovi\'c]{Danica Kosanovi\'c}
\address{Universität Bern, Mathematisches Institut (MAI), Alpeneggstrasse 22, 3012 Bern, Switzerland}
\email{danica.kosanovic@unibe.ch}
\author[J. Nikoli\'c]{Jovana Nikoli\'c}
\address{University of Belgrade, Faculty of Mathematics, Studentski trg 16, 11000 Belgrade, Serbia}
\email{jovana.nikolic@matf.bg.ac.rs}
\author[Z. Petri\'c]{Zoran Petri\'c}
\address{Serbian Academy of Sciences and Arts, Mathematical Institute, Knez Mihailova 36, 11001 Belgrade, Serbia}
\email{zpetric@mi.sanu.ac.rs}
\begin{document}
\begin{abstract}
	A faithful $(1+1)$ TQFT has recently been constructed, but the existence of a faithful $(2+1)$ TQFT remains an open question, that subsumes the hard problem of linearity of mapping class groups of surfaces. To circumvent the latter problem we construct a subcategory of the category of 3-cobordisms, containing disjoint unions of tori and simplest cobordisms between them. On this we define TQFTs that are able to distinguish pairs of torus bundles and lens spaces, previously shown not to be distinguishable by quantum invariants.
\end{abstract}

\maketitle

A $(2+1)$-dimensional topological quantum field theory (TQFT) is a symmetric monoidal functor from the category $\Cob$ to finite dimensional complex vector spaces $\Vec_{\CC}$. The objects of $\Cob$ are oriented compact surfaces without boundary, and its morphisms are oriented 3-dimensional cobordisms, considered up to orientation-preserving homeomorphisms preserving boundary components. \emph{In this paper we are concerned with the question of existence of a faithful $(2+1)$ TQFT}, that is, a functor that is injective both on objects and morphisms.

To put this question into context, in one dimension lower it has long been known that $(1+1)$ TQFTs are in bijective correspondence with commutative Frobenius algebras. However, only recently a faithful one has been constructed, in the joint work of one of present authors with Gajovi\'c and Telebakovi\'c-Oni\'c~\cite{G20}.

In the case of $(2+1)$ TQFTs the classification has recently been given in terms of J-algebras by Juh\'asz~\cite{Juhasz}, but these objects are very difficult to construct and classify. Moreover, note that the question of faithfulness of a $(2+1)$ TQFT subsumes that of linearity of mapping class groups ($\MCG$) of all surfaces. Indeed, the mapping cylinder $\Cyl_f$ of a homeomorphism $f: \Sigma\to\Sigma$ is a cobordism to which a TQFT $F$ assigns a linear map $F(\Cyl_f): F(\Sigma)\to F(\Sigma)$. If $F$ is faithful, then this gives an injection $\MCG(\Sigma)\hra GL(n,\CC)$ for some $n\geq1$. 

The most well-studied source of $(2+1)$ TQFTs are modular tensor categories, defined by Turaev~\cite{Turaev}, following his work with Reshetikhin~\cite{Resh-Turaev}. The main examples arise as semisimple quotients of representation categories of quantum groups. However, already at the level of $\MCG$ representations these TQFTs fail to be faithful, as one can show that the action of any Dehn twist has finite order. 

In fact, Dong, Lin, and Ng~\cite{DLN} showed that already for the torus $T$ each quantum representation of $\SL$ has nontrivial kernels of finite index (and is a congruence subgroups, that is, contains $\ker (\SL\sra SL(2,\Z/N\Z))$ for some $N>1$).

Building on this, Funar~\cite{F13} has shown that there exist infinitely many pairs $Bun_{K_n}$ and $Bun_{L_n}$ of non-homeomorphic closed 3-manifolds whose Reshetikhin--Turaev invariants are the same for all modular tensor categories. These pairs are the torus bundles over $S^1$ whose monodromies are homeomorphisms of the torus given by certain matrices $K_n,L_n\in \MCG(T)\cong \SL$, that are conjugate in $SL(2,\Z/N\Z)$ for every $N>1$; see also Theorem~\ref{thm:Funar2} below.

Let us mention that recently modular categories have been studied in the non-semisimple setting, with examples in which Dehn twists have infinite order~\cite{BCGP,DGGPR,Muller-Woike}, and could potentially give faithful representations.

Therefore, our initial faithfulness question is heavily influenced by the theory of mapping class groups representations, and seems very hard. Since our viewpoint is rather that of 3-manifold theory, we will study a modification. Following \cite{F23} a \emph{restricted} $(2+1)$ TQFT is a symmetric monoidal functor defined on a symmetric monoidal subcategory of $\Cob$, with values in $\Vec_{\CC}$. \emph{Our goal is to construct a faithful restricted $(2+1)$ TQFT on large subcategories of $\Cob$.}

Firstly, we restrict to objects that are \emph{disjoint unions of tori}, for which the mapping class group is already itself linear. Next, we restrict the class of 3-manifolds underlying the arrows, so that they are all obtained by gluing several copies of some simple generators. Let $N^k$ denotes the disc $D^2$ minus $k\geq0$ disjoint open disks. Some simple classes of cobordisms are supported by thickened tori $N^1\tm S^1$ -- that includes cylinders and (co)pairings -- and these generate the subcategory that we call $\bMCG<\Cob$. In particular, $\bMCG$ contains all torus bundles (as cobordisms from the empty set to itself); see Corollary~\ref{cor:support-b}. If we additionally include solid tori $N^0\tm S^1$ -- that represent (co)units -- we obtain the category called $\beMCG<\Cob$, that includes all lens spaces; see Corollary~\ref{cor:support-be}. 

Our main results are summarised as follows. Let $\KK$ be $\RR$ or $\CC$.

\begin{thm}[Corollary~\ref{cor:TQFT}]
There is a subcategory $\beMCG<\Cob$ that contains all torus bundles and lens spaces, and such that a symmetric monoidal functor $F:\beMCG\to\Vec_{\KK}$ is completely described by the data:
	\begin{align*}
		n_F& \in\Z_{\geq1},\quad
		\rho^F_a  \in GL(n_F,\KK),\quad
		\rho^F_b \in GL(n_F,\KK),\\
		\b^F& \in \Mat(2n_F \tm 1,\KK),\quad
		\g^F \in \Mat(1\tm 2n_F,\KK),\quad
		\unit^F \in\Mat(1\tm n_F,\KK),
	\end{align*}
	that satisfy:
	\begin{align*}
		&\rho^F_a\rho^F_b\rho^F_a=\rho^F_b\rho^F_a\rho^F_b,\quad
		(\rho^F_a\rho^F_b)^6=1,\\
		&\b^F\circ\tau=\b^F, \quad
		\tau\circ\g^F=\g^F,\quad
		\rho^F_a\circ \unit^F = \unit^F,\\
		&(\b^F\otm E_{n_F})\circ(E_{n_F}\otm\g^F)=E_{n_F}=(E_{n_F}\otm\beta^F)\circ(\g^F\otm E_{n_F}),\\
		&\b^F\circ(\rho^F_a\otm E_{n_F}) = \b^F\circ(E_{n_F}\otm \rho^F_b),
	\end{align*}
	for the identity matrix $E_{n_F}\in GL(n_F,\KK)$ and $\tau=\matrica{0_{n_F} & E_{n_F}\\ E_{n_F} & 0_{n_F}}\in GL(2n_F,\KK)$.
\end{thm}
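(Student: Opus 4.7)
The plan is to deduce this corollary from a presentation of $\beMCG$ as a symmetric monoidal category by generators and relations, which I expect to be established in the body of the paper preceding the statement. First I would identify the generating data: the only generating object is the torus $T$; the generating morphisms are the two Dehn twists $a, b : T \to T$ which, together with the braid relation and the order-four relation on $aba$, present $\MCG(T) \cong \SL$; the pairing $\b : T \sqcup T \to \emptyset$ and copairing $\g : \emptyset \to T \sqcup T$ obtained from the thickened torus $N^1 \tm S^1$ with two different in/out decompositions of its boundary; the unit $\unit : \emptyset \to T$ coming from the solid torus $N^0 \tm S^1$; and the symmetry isomorphisms inherited from $\Cob$.

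Next, I would enumerate the relations forced by the topology of $N^1 \tm S^1$ and $N^0 \tm S^1$. The snake/zigzag identities for $\b$ and $\g$ follow because the result of gluing the copairing to the pairing along a shared torus boundary is the cylinder $T \tm I$. The swap-invariance $\b \circ \tau_{T,T} = \b$ and $\tau_{T,T} \circ \g = \g$ expresses the fact that $N^1 \tm S^1$ admits a self-homeomorphism exchanging its two boundary tori. The relation $\b \circ (a \otm \mathrm{id}) = \b \circ (\mathrm{id} \otm b)$ records that a Dehn twist on one boundary torus of $N^1 \tm S^1$ can be slid across the annulus cross-section to become (by the orientation-induced identification of the two boundary tori) the corresponding Dehn twist generator on the other side. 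Finally, $a \circ \unit = \unit$ expresses that the Dehn twist along the meridian of $\partial(N^0 \tm S^1)$ extends over the solid torus.

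Given such a presentation, a symmetric monoidal functor $F : \beMCG \to \Vec_\KK$ is determined by its images on generators subject to the generating relations. Set $n_F = \dim F(T)$, and observe that the symmetric monoidal structure used here satisfies $\dim F(T^{\sqcup k}) = k \cdot n_F$, matching both the appearance of $2n_F$ in the dimensions of $\b^F, \g^F$ and the block form of the swap $\tau$. The image data $\rho^F_a, \rho^F_b, \b^F, \g^F, \unit^F$ then captures the values of $F$ on generating morphisms, with invertibility of $\rho^F_a, \rho^F_b$ following from the fact that $a, b$ are isomorphisms in $\beMCG$. Applying $F$ to the list of relations gives precisely the matrix equations in the statement. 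Conversely, any matrix data satisfying these equations extends uniquely to a symmetric monoidal functor by universality of the presentation.

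The main obstacle is not in the corollary itself, which amounts to a formal translation, but in the underlying presentation theorem: one must prove both that every morphism of $\beMCG$ is a composite of the listed generators (closure of the generating system under symmetric monoidal composition) and that the listed relations are a complete set (no ``hidden'' relations arise from the 3-dimensional topology). The completeness of the relation set is the delicate part, requiring one to track how Dehn twists on boundary tori interact with cut-and-paste decompositions into copies of $N^1 \tm S^1$ and $N^0 \tm S^1$, and to verify that no cobordism identity escapes the listed relations.
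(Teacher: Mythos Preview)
Your approach is correct and essentially matches the paper's: the corollary is deduced from a presentation theorem (Theorem~\ref{thm:freely}) asserting that $\beMCG$ is the free symmetric strict monoidal category on what the paper calls a $T^{\beta\varepsilon}$-object, which is precisely the package of generators and relations you list; the corollary is then the formal translation of this universal property into linear-algebraic data for the target $\Vec_\KK$. You also correctly identify where the real work lies, namely the completeness of the relation list, which the paper handles via a normal-form argument together with the classification theorems for torus bundles and lens spaces.

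One slip: the monoidal structure on $\Vec_\KK$ is the tensor product, so $\dim F(T^{\otm k}) = n_F^k$, not $k\cdot n_F$. The ``$2n_F$'' in the statement and the block form of $\tau$ appear to be typos; compare the example $F_3$ in Section~\ref{sec:tqft}, where $n_{F_3}=3$ and $\beta^{F_3}$ has $9=n_{F_3}^2$ entries, not $6$.
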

This follows from our main result about these subcategories, Theorem~\ref{thm:freely}, that describes them explicitly in terms of generators and relations. The key ingredients for this are the classical results, Theorems~\ref{thm:torus-bundles} and~\ref{thm:lens-spaces}, classifying the mentioned two classes of closed 3-manifolds, torus bundles and lens spaces.

As a corollary we can easily construct TQFTs that distinguish between pairs of 3-manifolds not distinguished by quantum invariants.

\begin{thm}[Theorem~\ref{thm:F3}]
	The functor $F_3:\beMCG\to\Vec_{\RR}$ defined by
\begin{align*}
	&n_{F_3}=3,\quad
	\rho^{F_3}_a=\frac{1}{2}\begin{pmatrix}1 & 4 & 2\\0 & 2 & 2\\ 0 & 0 & 4\end{pmatrix},\quad
	\rho^{F_3}_b=\frac{1}{2}\begin{pmatrix}4 & 0 & 0\\-2 & 2 & 0\\2 & -4 & 1\end{pmatrix},\\
	&\b^{F_3}=\begin{pmatrix}2 & 4 & 1 & 4 & -4 & -4 & 1 & -4 & 2\end{pmatrix},\quad
	\g^{F_3}=\frac{1}{36}\begin{pmatrix}8 & 4 & 4 & 4 & -1 & -4 & 4 & -4 & 8\end{pmatrix}^T,\\
	&\unit^{F_3}=\begin{pmatrix}4 & 1 & 0 \end{pmatrix}^T,
\end{align*}
	is a restricted TQFT that distinguishes all Funar's pairs of torus bundles not distinguished by any Turaev--Viro invariant, some Funar's pairs of torus bundles not distinguished by any Reshetikhin--Turaev invariant, and some homotopy equivalent lens spaces.
\end{thm}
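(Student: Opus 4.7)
The plan is to proceed in two steps: first verify that the given data defines a functor, then compute and compare the resulting invariants on the relevant closed 3-manifolds.

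For the first step, by Corollary~\ref{cor:TQFT} it suffices to check the listed relations on the tuple $(n_{F_3},\rho^{F_3}_a,\rho^{F_3}_b,\b^{F_3},\g^{F_3},\unit^{F_3})$. Each is a matrix identity of size at most $9$: the braid relation and the order-twelve relation for $\rho^{F_3}_a\rho^{F_3}_b$ are $3\tm 3$; the symmetry and unit-compatibility relations are $1\tm 9$, $9\tm 1$, and $3\tm 1$ respectively; the zigzag/duality identities contract a $1\tm 9$ with a $9\tm 3$ down to the $3\tm 3$ identity; and the final relation $\b^{F_3}\circ(\rho^{F_3}_a\otm E_3)=\b^{F_3}\circ(E_3\otm\rho^{F_3}_b)$ is a $1\tm 9$ identity. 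All are routine to check, most easily by computer algebra; none is conceptually deep.

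For the second step, I would first derive general formulas for $F_3$ on the manifolds in question. By Theorem~\ref{thm:freely} and Corollary~\ref{cor:support-b}, the torus bundle with monodromy $M\in\MCG(T)\cong\SL$ is represented in $\beMCG$ as the closed cobordism $\b\circ(\rho_M\otm\mathrm{id}_T)\circ\g:\emptyset\to\emptyset$, hence
$$F_3(Bun_M)=\b^{F_3}\cdot(\rho^{F_3}_M\otm E_3)\cdot\g^{F_3}\in\RR,$$
where $\rho^{F_3}_M$ is obtained by substituting $\rho^{F_3}_a,\rho^{F_3}_b$ into any word in $a,b$ representing $M$. Similarly, using Theorem~\ref{thm:lens-spaces} and Corollary~\ref{cor:support-be}, each lens space $L(p,q)$ is represented by an explicit word in $\unit$, its dual (built from $\b$ and $\g$), and $\rho_a,\rho_b$, yielding an explicit scalar expression for $F_3(L(p,q))\in\RR$.

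With these formulas in hand, the task reduces to matrix arithmetic. For each pair $(K_n,L_n)$ on Funar's list one substitutes the explicit presentations of $K_n,L_n$ as words in $a,b$ (taken from~\cite{F13}) into the torus-bundle formula and compares the two scalars. The homotopy-equivalent lens spaces are handled in the same way, case by case. The main obstacle is distinguishing the \emph{entire} infinite subfamily of Funar pairs not caught by Turaev--Viro invariants: rather than verifying cases one at a time, one must identify a closed-form pattern (typically polynomial or exponential in $n$) in the entries of the iterated matrix products $\rho^{F_3}_{K_n}$ and $\rho^{F_3}_{L_n}$. The specific upper/lower-triangular form of $\rho^{F_3}_a$ and $\rho^{F_3}_b$ likely plays a key role here, as it makes long products tractable and produces entries of a predictable arithmetic type, allowing a uniform comparison across the whole family.
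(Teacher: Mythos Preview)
Your plan is correct and matches the paper's approach: verify the axioms of Corollary~\ref{cor:TQFT} by direct matrix computation, then evaluate the resulting scalar invariants on the relevant closed manifolds and compare.

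Two refinements worth noting. First, the paper observes (Lemma~\ref{lem:trace}) that the torus-bundle invariant $\b^{F_3}\cdot(\rho^{F_3}_M\otm E_3)\cdot\g^{F_3}$ is simply $\tr(\rho^{F_3}_M)$; this collapses your $1\times 9$ contraction to a trace and makes the comparison much cleaner. Second, for the infinite Turaev--Viro family the key structural fact is that Funar's matrices decompose as \emph{single} products $G_{k,q,v}=D_b^{-kv}D_a^{kq^2}$ and $H_{k,q,v}=D_b^{-kq^2v}D_a^{k}$, so the ``closed-form pattern'' you anticipate is obtained by computing $(\rho^{F_3}_a)^m$ and $(\rho^{F_3}_b)^m$ once (triangularity makes this elementary) and then comparing the $2$-adic valuations of the two traces. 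The remaining cases (the four Reshetikhin--Turaev pairs $X_i,Y_i$ and the two lens-space pairs) are handled in the paper exactly as you propose, by one-off numerical evaluation.
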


A natural next class of cobordisms to study is generated by $N^0\tm S^1$ and $N^2\tm S^1$. This gives precisely the class of \emph{graph manifolds} introduced in the seminal work of Waldhausen \cite{W67a, W67b} (see also \cite{FM97} where this class is called $\mathcal{H}$, and \cite{N05}). Equivalently, an irreducible graph manifold either has Sol geometry or has only Seifert pieces in its JSJ decomposition. This is a very prominent class, for which we plan to investigate existence of a faithful restricted $(2+1)$ TQFT in further work. 

\subsection*{Acknowledgements}
The authors were partially supported by the Science Fund of the Republic of Serbia, Grant No. 7749891, Graphical Languages - GWORDS.

{
	\tableofcontents
}

%%%
\section{Preliminaries}

\subsection{Conventions}
\label{subsec:conventions}
Let us first list some general notation.
\begin{enumerate}
	\item\label{conv:disj} 
		We parametrise the disjoint union of spaces $X$ and $Y$ by
		\[
			X\sqcup Y\coloneqq \big( X\tm\{0\} \big) \cup \big( Y\tm\{1\} \big),
		\]
		and denote the swap map by
		\[
			\swap:X\sqcup Y\to Y\sqcup X,\quad  \begin{cases}(x,0) \mapsto (x,1),\\ (y,1)\mapsto (y,0).\end{cases}
		\]
	\item\label{conv:disj-incl}
		In particular, $X\sqcup X\subset X\tm I$, where $I\coloneqq [0,1]$. Given homeomorphisms $f,g:X\to X$ we use the same notation both for the homeomorphism $f\sqcup g: X\sqcup X\hra X\sqcup X$ and for the embedding
		\[
			f\sqcup g: X\sqcup X\hra X\tm I,\quad \begin{cases}(x,0) \mapsto (f(x),0),\\ (x,1)\mapsto (g(x),1).\end{cases}
		\]
	\item\label{conv:swap}
		We will also use the embedding
		\[
			(f\sqcup g)\circ \swap:X\sqcup X\hra X\tm I,\quad \begin{cases}(x,0)\mapsto (g(x),1),\\ (x,1)\mapsto (f(x),0).\end{cases}
		\]
	\item\label{conv:time-incl-proj}
		Given $f:X\to X$ and $t\in I$, let us denote
		\[
			(f,t):X\hra X\tm I,\quad x\mapsto (f(x),t).
		\]
	\item
		For $t\in I$ we write $\pi_{X,t}:X\tm\{t\}\to X$ for the first projection, which we abbreviate by $\pi_t$ when $X$ is clear from the context.
	\item 
		We write $M_0\cong M_1$ if and only if the oriented manifolds $M_0$ and $M_1$ are orientation preserving homeomorphic.
	\item\label{conv:boundary}
		To orient the boundary $\partial M$ of an oriented manifold $M$ we use the ``outward normal first'' convention.
		In particular, for an oriented manifold $\Sigma$, we give $\Sigma\tm I$ the product orientation, so the orientation of $\Sigma\tm\{0\}$ is such that $(id,0):\Sigma\hra\Sigma\tm I$ is an orientation reversing embedding, whereas $(id,1):\Sigma\hra\Sigma\tm I$ is orientation preserving. 
	\item\label{conv:gluing}
		Given a subset $\Sigma\subseteq\partial M_1$ and a map $\varphi:\Sigma\to\partial M_0$ we define
		\[
			M_0\cup_\varphi M_1\coloneqq (M_0\sqcup M_1)/\sim,
		\]
		where $\sim$ is given by $(\varphi(x),0)\sim (x,1)$ for $x\in \Sigma$. If both $M_i$ are oriented manifolds, $\Sigma$ is the union of some components of $\partial M_1$, and $\varphi$ is an orientation reversing embedding, then $M_0\cup_\varphi M_1$ is an oriented manifold. This comes with natural orientation preserving inclusions {\black $M_i\hra M_0\cup_\varphi M_1$}.
		\item 
		Let 
		$E\coloneqq\matrica{1 & 0\\ 0 & 1},\,
		J\coloneqq\matrica{ 0 & 1 \\ 1 & 0 },\,
		 D_a\coloneqq\matrica{ 1 & 1 \\ 0 & 1 },\,
		 D_b\coloneqq\matrica{ 1 & 0 \\ -1 & 1 }$.
%	    \item We denote $T\coloneqq S^1\tm S^1$.
\end{enumerate}

Let us now discuss the category $\Cob$, whose objects are closed surfaces $\Sigma$ and arrows are 3-dimensional cobordisms $M$.
\begin{enumerate}[resume]
	\item
		A closed surface means a finite sequence of equivalence classes up to homeomorphism of connected oriented compact 2-manifolds without boundary. 
	\item\label{conv:equiv-of-cob}
		A cobordism from $\Sigma^s$ to $\Sigma^t$ means an oriented compact 3-manifold $M$ together with a homeomorphism $f^s\sqcup f^t$ from $\Sigma^s\sqcup\Sigma^t$ to $\partial M$, written
	\[
		\Sigma^t\xra{f^t} M\xla{f^s} \Sigma^s
	\]
		such that $f^s$ is \emph{orientation reversing}, while $f^t$ is \emph{orientation preserving}. Such manifolds $M$ are considered up to orientation preserving homeomorphisms $\Phi$ that preserve the boundary. More precisely, if the diagram
		\[\begin{tikzcd}
			\Sigma^t\rar[hook]{f^t_1}\dar[equals] & M_1\dar{\Phi}[swap]{\cong} &\lar[hook'][swap]{ f^s_1} \Sigma^s\dar[equals]\\
			\Sigma^t\rar[hook]{f^t_2} & M_2 &\lar[hook'][swap]{ f^s_2} \Sigma^s
		\end{tikzcd}
		\]
		commutes, then the two rows define the same arrow in $\Cob$.
	\item\label{conv:otm}
		The symmetric monoidal structure on $\Cob$ is given by the concatenation $\otm$ on objects and arrows, and is strictly associative. The symmetry transposes two surfaces in the sequence, as discussed later in~\eqref{eq:tau}.
	\item\label{conv:composition}
		The composition of arrows in $\Cob$ is given by gluing as in \ref{conv:gluing} the target of the cobordism $C_1$ to the source of $C_0$ by
		\begin{align*}
			C_0\circ C_1
			&= \big( \Sigma^t\xra{f^t_0} M_0\xla{f^s_0} \Sigma \big)\circ \big( \Sigma\xra{f^t_1} M_1\xla{f^s_1} \Sigma^s \big)\\
			&\coloneqq \big( \Sigma^t\xra{f^t_0} M_0\cup_{f^s_0\circ (f^t_1)^{-1}} M_1\xla{f^s_1} \Sigma^s \big).
		\end{align*}
	\item\label{conv:beta}
			If $f:\Sigma\to\Sigma$ preserves orientation and $g:\Sigma\to\Sigma$ reverses it, then using conventions~\ref{conv:disj-incl} and~\ref{conv:boundary} the map $f\sqcup g: \Sigma\sqcup\Sigma\hra \Sigma\tm I$ is orientation reversing. Thus, there is a cobordism $(\emptyset\hra \Sigma\tm I \hla \Sigma\sqcup\Sigma: f\sqcup g)$. Similarly, there is a cobordism $(g\sqcup f:\Sigma\sqcup\Sigma \hra \Sigma\tm I \hla \emptyset)$.
	\item\label{conv:Cyl}
		If $f,g:\Sigma\to\Sigma$ are orientation preserving homeomorphisms, then there is a cobordism $(g,1):\Sigma\hra \Sigma\tm I \hla \Sigma : (f,0)$, using notation~\ref{conv:time-incl-proj}. We also briefly write this as $g:\Sigma\hra \Sigma\tm I \hla \Sigma : f$.
	\item\label{conv:self-gluing}
		For a cobordism $(g:\Sigma\hra M \hla \Sigma: f)$ the \emph{self-gluing} is the closed manifold obtained as the quotient of $M$ by the relation $f(x)\sim g(x)$ for every $x\in\Sigma$. This operation is sometimes called ``mending'' in the literature.
%	\item
%		We read schematic drawings of cobordisms from top to bottom. For example, the gluing from \ref{conv:gluing} and self-gluing from \ref{conv:self-gluing} are schematically represented in Figure~\ref{fig:gluing}.
\end{enumerate}
%\begin{figure}[htbp!]
%    \centering
%    \includegraphics[width=0.5\linewidth]{fig/fig-gluing}
%    \caption{The gluing and the self-gluing.}
%    \label{fig:gluing}
%\end{figure}

%%%
\subsection{Useful facts about (self-)gluing}

\begin{lem}[g-Collar]
\label{lem:collar}
	 Fix an oriented 3-manifold $M$ and
	 a closed (possibly disconnected) surface $\Sigma\subseteq\partial M$ with induced orientation. Given
 an orientation reversing embedding $g:\Sigma\to \partial M$ onto $\Sigma$
and an orientation preserving homeomorphism $f:\Sigma\to\Sigma$,
	there is an orientation preserving homeomorphism
	\[
		h:M\cup_{g\circ \pi_1} (\Sigma\tm I)\ra M
	\] 
	such that $h\circ(f,0,1)=g\circ f$
	and $h$ keeps the other boundary components of $M$ fixed.
\end{lem}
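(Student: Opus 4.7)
The plan is to invoke the collar neighborhood theorem to exhibit $N\coloneqq M\cup_{g\circ\pi_1}(\Sigma\tm I)$ as an enlargement of $M$ along an existing collar of $\Sigma$, and then to absorb the newly attached cylinder back into that collar. First, pick an orientation-preserving collar $c:\Sigma\tm I\hra M$ with $c(x,0)=x$, disjoint from all other components of $\partial M$. With the ``outward normal first'' convention of~\ref{conv:boundary}, orientation-preservation amounts to the inward direction $\partial/\partial t$ on $\Sigma\tm\{0\}$ being carried by $c$ to a vector pointing into $M$, which is built into the collar construction.

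Next, reparametrise the newly attached cylinder by $\phi:\Sigma\tm[-1,0]\to\Sigma\tm I$, $\phi(x,t)=(g^{-1}(x),-t)$. Since both $g^{-1}$ on $\Sigma$ and $t\mapsto -t$ on the interval are orientation-reversing, $\phi$ is orientation-preserving. Moreover, $\phi(x,0)=(g^{-1}(x),0)$ is identified in $N$ with $g(g^{-1}(x))=x\in\Sigma\subseteq\partial M=c(\Sigma\tm\{0\})$, so $c$ and $\phi$ combine into a single orientation-preserving embedding of a ``double collar'' $\Sigma\tm[-1,1]\hra N$.

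Now define $h:N\to M$ by declaring $h$ to be the identity on $M\setminus c(\Sigma\tm[0,1))$ and sending the double collar $\Sigma\tm[-1,1]\hra N$ into $c(\Sigma\tm[0,1])\subseteq M$ by the rule $(x,t)\mapsto c(x,(t+1)/2)$. The rules agree on all overlaps: at $t=1$ both give $c(x,1)$, and at the glued $t=0$ both give $c(x,1/2)$. Hence $h$ is a well-defined continuous bijection, piecewise an orientation-preserving homeomorphism, so an orientation-preserving homeomorphism overall.

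By construction $h$ fixes every boundary component of $M$ other than $\Sigma$. Reading the symbol $(f,0,1)$ via~\ref{conv:time-incl-proj} as the embedding $(f,1):\Sigma\hra\Sigma\tm I\hra N$, $x\mapsto (f(x),1)$, the relation $\phi(y,-1)=(g^{-1}(y),1)$ gives $(f(x),1)=\phi(g(f(x)),-1)$, hence $h((f(x),1))=c(g(f(x)),0)=g(f(x))=(g\circ f)(x)$, as required. The only non-routine point is the orientation bookkeeping for $\phi$: the two reversals (of $g^{-1}$ on $\Sigma$ and of $t\mapsto -t$ on $I$) must cancel so that the glued cylinder assembles with $c$ into an orientation-preserving double collar; once this is verified, the rest is the standard ``swallow a collar'' construction.
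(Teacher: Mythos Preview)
Your strategy is exactly the paper's: absorb the attached cylinder into a collar of $\Sigma$. The paper carries this out in two steps---first an explicit homeomorphism $h'$ onto the external-collar space $M\cup_{i\circ\pi_0}(\Sigma\times I)$, then a black-box appeal to Uniqueness of Collars---while you write down a single explicit $h$ using a chosen interior collar $c$. Either packaging is fine.

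The bookkeeping, however, has the interval ends swapped. In $N=M\cup_{g\circ\pi_1}(\Sigma\times I)$ the symbol $\pi_1$ is the projection $\Sigma\times\{1\}\to\Sigma$ (the paper says so just after the lemma), so it is $\Sigma\times\{1\}$ that is glued to $M$ and $\Sigma\times\{0\}$ that becomes the new boundary; correspondingly $(f,0,1)$ is the map $x\mapsto(f(x),0,1)$ into $\Sigma\times\{0\}\times\{1\}$, not into $\Sigma\times\{1\}$. With your $\phi(x,t)=(g^{-1}(x),-t)$ one gets $\phi(x,0)=(g^{-1}(x),0)$, which lies on the \emph{free} end and is \emph{not} identified with $x=c(x,0)$ in $N$; hence the two halves of your ``double collar'' do not actually meet at $t=0$. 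The repair is immediate: take $\phi(x,t)=(g^{-1}(x),1+t)$, so that $\phi(x,0)=(g^{-1}(x),1)\sim x$ and $\phi(x,-1)=(g^{-1}(x),0)$ is free; then $(f(x),0,1)=\phi(g(f(x)),-1)$ and $h$ sends it to $c(g(f(x)),0)=g(f(x))$ as desired. A secondary point: under the ``outward normal first'' convention~\ref{conv:boundary} the collar $c$ with $c(\cdot,0)=\mathrm{id}_\Sigma$ is orientation \emph{reversing} (for the same reason that $(id,0)$ is), and so is the corrected $\phi$; since both reverse, the assembled double collar is consistently oriented and $h$ is orientation preserving, but not for the reason you state.
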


Here we use the notation~\ref{conv:time-incl-proj} for the first projection $\pi_1:\Sigma\tm\{1\}\to\Sigma$, and notation~\ref{conv:gluing} for the gluing, so that $\Sigma\tm\{0\}\tm\{1\}\subset M\cup_{g\circ \pi_{1}} \Sigma\tm I$. Moreover, we generalise~\ref{conv:time-incl-proj} by defining $(f,0,1):\Sigma\to \Sigma\tm\{0\}\tm\{1\}$ by $x\mapsto (f(x),0,1)$. %Note that $\Sigma$ is not necessarily connected.

\begin{proof}
	We first define an orientation preserving homeomorphism
	\[
		h':M\cup_{g\circ\pi_1} (\Sigma\tm I) \ra M\cup_{i\circ\pi_0} (\Sigma\tm I),
	\]
	where $i:\Sigma\subseteq\partial M$. For $x\in M$, let $h'(x,0)=(x,0)$. For $(y,t)\in\Sigma\tm I$, let
	\[
		h'(y,t,1)= (g(y),1-t,1).
	\]
	We check that this is well defined, i.e.\ two points are related by $\sim$ in the source if and only if their $h'$ images are related by $\sim$ in the target. For the direction $(\Rightarrow)$ we know that these two points must be of the form $(g(x),0)$ and $(x,1,1)$ and their $h'$ images are $(g(x),0)$ and $(g(x),0,1)$, which are indeed related by $\sim$ in the target. The $(\Leftarrow)$ direction is similar. Moreover, note that $h'$ is the identity on $\partial M$ and
	\[
		h'\circ (f,0,1)(y)= h'(f(y),0,1)= (g(f(y)),1,1).
	\]
	Now note that the target of $h'$ is obtained from $M$ by attaching an external collar along $\Sigma$. 
	By the Uniqueness of Collars Theorem, there is an orientation preserving homeomorphism $h'':M\cup_{i\circ\pi_0} (\Sigma\tm I)\to M$ such that $h''(y,1,1)= y\in\Sigma\subset M$ and $h''(y)=y$ for $y\in\partial M\setminus\Sigma$. Define $h:=h''\circ h'$, so $h\circ (f,0,1)= g\circ f$ as desired.
\end{proof}

If  in the statement of the lemma we replace $\pi_1:\Sigma\tm\{1\}\to\Sigma$ by $\pi_0:\Sigma\tm\{0\}\to\Sigma$, and ``reversing'' for $g$ by ``preserving'', then we have the same conclusion but with $(f,0,1)$ replaced by $(f,1,1)$. We have the following two corollaries of these results.

\begin{cor}\label{cor:collar1}
	$\big( \Sigma^t\xra{h} M\xla {g_2}\Sigma \big) \circ \big( \Sigma\xra {g_1}\Sigma\tm I\xla{f}\Sigma \big)
	= \big( \Sigma^t\xra{h} M\xla {g_2\circ g_1^{-1}\circ f}\Sigma \big)$.
\end{cor}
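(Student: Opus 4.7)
The plan is to expand the composition on the LHS using the categorical conventions and then apply Lemma~\ref{lem:collar} directly.

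First, by the composition convention~\ref{conv:composition} applied with $f^t_0=h$, $f^s_0=g_2$, $f^t_1=g_1$ and $f^s_1=f$, the LHS equals
\[
\big( \Sigma^t \xra{h} M \cup_\varphi (\Sigma\tm I) \xla{(f,0)} \Sigma \big),
\]
where the gluing map $\varphi\colon \Sigma\tm\{1\}\to\partial M$ is $\varphi = (g_2\circ g_1^{-1})\circ \pi_1$, i.e.\ $(y,1)\mapsto g_2(g_1^{-1}(y))$.

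Next, I would invoke Lemma~\ref{lem:collar}, taking its boundary subsurface $\Sigma\subseteq\partial M$ to be $g_2(\Sigma)$ with its induced orientation, its orientation-reversing self-embedding to be $g\coloneqq g_2\circ g_1^{-1}$ (via the identification of the abstract $\Sigma$ with $g_2(\Sigma)$ using $g_2$), and its orientation-preserving homeomorphism to be $f$. The Lemma then yields an orientation-preserving homeomorphism
\[
\Phi\colon M\cup_\varphi (\Sigma\tm I) \to M
\]
that is the identity on all boundary components of $M$ other than $g_2(\Sigma)$ (in particular, it fixes the $\Sigma^t$ boundary embedded via $h$) and satisfies $\Phi\circ (f,0) = g_2\circ g_1^{-1}\circ f$ on $\Sigma$.

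Finally, this $\Phi$ provides exactly the homeomorphism required by convention~\ref{conv:equiv-of-cob} to witness the equality of the two cobordisms, completing the proof.

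The main subtlety I expect is the bookkeeping of orientations when identifying the abstract $\Sigma$ (with its source-cobordism orientation, for which $g_2$ is orientation reversing) with the subset $g_2(\Sigma)\subseteq\partial M$ (with its induced orientation, which is the opposite). Once this identification is handled correctly, $g_2\circ g_1^{-1}$ becomes the required orientation-reversing self-map of $g_2(\Sigma)$, so the Lemma applies without modification.
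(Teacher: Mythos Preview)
Your argument is correct and is precisely the paper's intended proof: the corollary is stated immediately after Lemma~\ref{lem:collar} (and its $\pi_0$-variant) with no separate argument, and your unpacking of the composition via convention~\ref{conv:composition} followed by a direct application of the lemma with $g=g_2\circ g_1^{-1}$ is exactly how one fills in the details.

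One minor point on your closing paragraph: under the identification you propose (via $g_2$), the map $g_2\circ g_1^{-1}$ actually transports to the orientation-\emph{preserving} self-map $g_2\, g_1^{-1}\, g_2^{-1}$ of $g_2(\Sigma)$ with its induced orientation, since the two orientation reversals from $g_2$ and $g_2^{-1}$ cancel. The clean resolution is simply to observe that the proof of Lemma~\ref{lem:collar} goes through verbatim for an abstract oriented $\Sigma$ equipped with an orientation-reversing embedding $g\colon\Sigma\to\partial M$: the only place the literal inclusion $i\colon\Sigma\subseteq\partial M$ is used is in forming the external collar in the target of $h'$, and there one may replace $i$ by $g$ together with the flip $t\mapsto 1-t$ on the $I$-factor (which is again orientation-preserving). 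This is routine and does not affect the validity of your argument.
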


\begin{cor}\label{cor:collar2}
	$\big( \Sigma\xra{f} \Sigma\tm I\xla {g_1}\Sigma \big)\circ \big( \Sigma\xra {g_2} M\xla{h}\Sigma^s \big) 
	= \big( \Sigma\xra {g_2\circ g_1^{-1}\circ f} M\xla{h}\Sigma^s \big)$.
\end{cor}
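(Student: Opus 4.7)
My approach is to deduce the corollary from the variant of Lemma~\ref{lem:collar} described in the remark right after its proof, which treats an orientation-preserving attaching map along the $\Sigma\tm\{0\}$ face of the cylinder. The geometric content is that a cylinder attached to the target side of $M$ can be absorbed into $M$ at the cost of twisting the target parameterisation by the attaching data.

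First I would unpack the left-hand side via convention~\ref{conv:composition} as
\[
	\big( \Sigma\xra{f} (\Sigma\tm I)\cup_{g_1\circ g_2^{-1}} M \xla{h}\Sigma^s \big),
\]
where the gluing identifies $(g_1(y),0)\in\Sigma\tm\{0\}$ with $g_2(y)\in\partial M$ for each $y\in\Sigma$. Up to the swap of disjoint-union labels (a purely cosmetic change of which piece sits on the ``0'' side in convention~\ref{conv:gluing}), this coincides with the pushout $M\cup_{g\circ\pi_0}(\Sigma\tm I)$ for the attaching map $g\coloneqq g_2\circ g_1^{-1}:\Sigma\to\partial M$. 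Because $g_1$ and $g_2$ are orientation-preserving in the brief notation of~\ref{conv:Cyl}, $g$ is an orientation-preserving embedding onto $g_2(\Sigma)\subset\partial M$, as required by the variant.

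Next I would apply the variant with this $g$ and the given $f$, obtaining an orientation-preserving homeomorphism $\Phi:M\cup_{g\circ\pi_0}(\Sigma\tm I)\to M$ which fixes every boundary component of $M$ other than $g_2(\Sigma)$ and satisfies $\Phi\circ(f,1,1)=g\circ f=g_2\circ g_1^{-1}\circ f$. Finally I would verify via convention~\ref{conv:equiv-of-cob} that $\Phi$ realises the desired equivalence of cobordisms: the source map $h:\Sigma^s\hra M$ is untouched because it factors through a boundary component that $\Phi$ keeps pointwise fixed, while the target map $f:\Sigma\hra\Sigma\tm I\hra M\cup_{g\circ\pi_0}(\Sigma\tm I)$ factors through the free face $(f,1,1)$ and so is carried by $\Phi$ to $g_2\circ g_1^{-1}\circ f$, exactly matching the right-hand side.

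The main obstacle is essentially bookkeeping: keeping straight which of the coordinates $\{0\}$ or $\{1\}$ come from the interval versus from the disjoint-union label, and checking that the orientations of $f,g_1,g_2,h$ are handled consistently with the source/target convention~\ref{conv:equiv-of-cob}. Once these are aligned, the corollary follows from the variant of Lemma~\ref{lem:collar} with no further geometric input.
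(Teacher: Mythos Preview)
Your proof is correct and follows exactly the approach the paper intends: the paper states Corollary~\ref{cor:collar2} as an immediate consequence of the variant of Lemma~\ref{lem:collar} (with $\pi_0$ and orientation-preserving $g$), and you have spelled out precisely that deduction. The only minor quibble is that your appeal to convention~\ref{conv:Cyl} for the orientation of $g_2$ is misplaced, since that convention concerns only the cylinder $\Sigma\times I$; the fact that $g_2$ is orientation preserving comes directly from convention~\ref{conv:equiv-of-cob}, but this does not affect the argument.
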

%\begin{figure}[htbp!]
%    \centering
%    \includegraphics[width=0.35\linewidth]{fig/fig-BunA}
%    \caption{A gluing as a self-gluing.}
%    \label{fig:BunA}
%\end{figure}

For the next helpful fact we use the swap map $\swap$ as in~\ref{conv:swap}, the notion of self-gluing from~\ref{conv:self-gluing}, and we give $\Sigma\times I$ the product orientation as in~\ref{conv:boundary}.
\begin{lem}[Self-Gluing]
\label{lem:self-gluing}
	Given an oriented surface $\Sigma$ and orientation preserving homeomorphisms $f,g:\Sigma\to\Sigma$, the manifold $\Sigma\tm I\cup_{(f\sqcup g)\circ \swap} \Sigma\tm I$ can be obtained as the self-gluing of the cobordism
$(g: \Sigma\hra \Sigma\tm I\hla\Sigma: f)$. 
\end{lem}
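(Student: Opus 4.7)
The plan is to construct an explicit orientation-preserving homeomorphism from $\Sigma\tm I\cup_{(f\sqcup g)\circ\swap}\Sigma\tm I$ onto the self-gluing of $(g:\Sigma\hra \Sigma\tm I\hla\Sigma: f)$, in the spirit of the proof of Lemma~\ref{lem:collar}. The idea is to view the two copies $M_0$ and $M_1$ of $\Sigma\tm I$ on the left as the lower and upper halves of a single longer cylinder $\Sigma\tm I$, with the homeomorphism $g$ accounting for the twist at the middle gluing; the remaining identification is then exactly the self-gluing relation $(f(x),0)\sim (g(x),1)$.

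Concretely, I would define a continuous map $\Psi$ on the disjoint union of the two copies by
\[
	\Psi((y,s),0)=(y,s/2), \qquad \Psi((y,s),1)=(g(y),(1+s)/2),
\]
with notation as in~\ref{conv:disj}. A direct check using conventions~\ref{conv:disj-incl} and~\ref{conv:swap} then shows that of the two identifications imposed by $(f\sqcup g)\circ\swap$, the first (between $(g(x),1)\in M_0$ and $(x,0)\in M_1$) already holds in $\Sigma\tm I$ after applying $\Psi$, as both points are sent to $(g(x),1/2)$; while the second (between $(f(x),0)\in M_0$ and $(x,1)\in M_1$) is sent to the pair $(f(x),0)$ and $(g(x),1)$, which is precisely one of the pairs identified by the self-gluing.

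Composing $\Psi$ with the quotient map to the self-gluing therefore yields a well-defined continuous surjection $\bar\Psi$ from $\Sigma\tm I\cup_{(f\sqcup g)\circ\swap}\Sigma\tm I$ onto the self-gluing. Injectivity follows from a short case analysis: any two points of $M_0\sqcup M_1$ with the same image under $\bar\Psi$ must lie either in the interior of one of the halves (where $\Psi$ is already injective on each), or over $\Sigma\tm\{1/2\}$ (giving exactly the first identification), or over $\Sigma\tm\{0,1\}$ (giving exactly the second identification together with the self-gluing). Since the domain and codomain are both closed oriented $3$-manifolds, $\bar\Psi$ is a continuous bijection of compact Hausdorff spaces, hence a homeomorphism; and it is orientation-preserving because it is built from the orientation-preserving homeomorphism $g$ on $\Sigma$ and from orientation-preserving rescalings of $I$.

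The main obstacle is not any single conceptual step but the careful bookkeeping with the conventions~\ref{conv:disj-incl}--\ref{conv:boundary} and~\ref{conv:swap}: one has to track which copy of $\Sigma$ in $\Sigma\sqcup\Sigma$ maps to which boundary component of which $M_i$, and what the swap contributes. A cleaner packaging would first apply Lemma~\ref{lem:collar} to the partial gluing using only the $g$-component of $(f\sqcup g)\circ\swap$, identifying it with a single cylinder $\Sigma\tm I$; the remaining $f$-component of the identification then descends precisely to the self-gluing relation $(f(x),0)\sim(g(x),1)$.
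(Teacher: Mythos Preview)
Your proof is correct and is essentially the same as the paper's: both split the double gluing into two steps, absorbing one of the two identifications into a single copy of $\Sigma\times I$ (the paper via Corollary~\ref{cor:collar1}, you via the explicit map $\Psi$, which is just that collar homeomorphism written out by hand) and recognising the remaining identification as the self-gluing relation $(f(x),0)\sim(g(x),1)$. Your closing ``cleaner packaging'' paragraph is in fact precisely the paper's argument, with the minor difference that the paper performs the $f$-gluing first (as a cobordism composition with the identity cylinder) and then invokes Corollary~\ref{cor:collar1}.
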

\begin{proof}
	The manifold $\Sigma\tm I\cup_{(f\sqcup g)\circ \swap} \Sigma\tm I$ is obtained by gluing along two copies of $\Sigma\sqcup \Sigma$. This can be done in two steps. First, perform the gluing of $\Sigma\tm\{1\}\tm\{1\}$ to $\Sigma\tm\{0\}\tm\{0\}$ using $f$; second, glue $\Sigma\tm\{0\}\tm\{1\}$ to $\Sigma\tm\{1\}\tm\{0\}$ using $g$. Using the definition of gluing the first step can be written as the cobordism composition:
\[
	\big( \Sigma\xra{g} \Sigma\tm I\xla{f} \Sigma \big) \circ \big( \Sigma\xra{id}\Sigma\tm I\xla{id}\Sigma \big),
\]
	and the second step is precisely the self-gluing of this: $(x,0,1)\sim(g(x),1,0)$.	
	It remains only to observe that by Corollary~\ref{cor:collar1} this displayed composite cobordism is equal to $(g : \Sigma\hra \Sigma\tm I\hla \Sigma : f)$.
\end{proof}

%%%
\subsection{The mapping class group}

In the torus $T\coloneqq S^1\tm S^1$ consider the two oriented curves $a=S^1\tm\{pt\}$ and $b=\{pt\}\tm S^1$.
The right-handed Dehn twists $\rho_a$ and $\rho_b$ are homeomorphisms of $T$ drawn in Figure~\ref{fig:Dehn-twists}.
We recall the following fact, and refer to \cite{MCG} for details.
\begin{prop}\label{prop:MCG}
	The oriented mapping class group of the torus $T$ (i.e.\ the group of its orientation preserving homeomorphisms modulo isotopy) is isomorphic to the group $\SL$ of integral $2\tm2$ matrices of determinant one. The matrix
	\[
		\matrica{ p & r \\ q & s }\in\SL
	\]
	corresponds to a homeomorphism (unique up to isotopy) that satisfies $a\mapsto pa+qb$ and $b\mapsto ra+sb$ in homology.
\end{prop}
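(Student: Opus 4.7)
The plan is to define a homomorphism $\Psi: \MCG^+(T)\to \SL$ by sending $[f]$ to the matrix of $f_*$ on $H_1(T;\Z)\cong \Z[a]\oplus\Z[b]$, and then to verify it is a well-defined isomorphism with the prescribed matrix convention. Well-definedness is immediate from homotopy invariance of singular homology, and functoriality makes $\Psi$ a group homomorphism. The image lies in $\SL$ rather than $\GL$ because an orientation-preserving $f$ has degree $+1$, and on a torus the degree of a self-map equals the determinant of the induced map on $H_1$ (by the K\"unneth formula applied to $H_2(T)\cong H_1(S^1)\otm H_1(S^1)$, where $f_*$ acts on $H_2$ as the determinant of $f_*|_{H_1}$). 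The matrix convention in the statement is just the choice that column vectors encode the image of the basis elements, so $\matrica{p & r\\ q & s}(1,0)^T=(p,q)^T$ expresses $a\mapsto pa+qb$.

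For surjectivity I would use the standard section: any matrix $A\in\SL$ acts linearly on $\RR^2$ preserving the lattice $\Z^2$, hence descends to an orientation-preserving homeomorphism $L_A$ of $T=\RR^2/\Z^2$ with $(L_A)_*=A$. This also conveniently shows that every class in $\MCG^+(T)$ has a linear representative.

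For injectivity, suppose $f_*$ is the identity matrix; I want to show $f$ is isotopic to $\mathrm{id}_T$. First, since $T$ is a $K(\Z^2,1)$, the homotopy class of any self-map is determined by its action on $\pi_1\cong H_1$; hence $f$ is homotopic to the identity. Concretely, lifting $f$ to $\wt f:\RR^2\to\RR^2$, the hypothesis gives $\wt f(x+n)=\wt f(x)+n$ for $n\in\Z^2$, so $\wt f(x)-x$ is $\Z^2$-periodic, and the straight-line homotopy $\wt F_t(x)=(1-t)x+t\wt f(x)$ descends to a homotopy $F_t:T\to T$ from $\mathrm{id}_T$ to $f$. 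Then one upgrades this homotopy to an isotopy by invoking the classical theorem of Baer--Epstein that homotopic orientation-preserving homeomorphisms of a closed orientable surface are isotopic. The main obstacle is exactly this last step, since a linear interpolation between two homeomorphisms need not remain a homeomorphism; however, the Baer--Epstein result (or the direct argument for the torus using the fact that non-separating simple closed curves are determined up to isotopy by their homology class) handles it cleanly.

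Finally, the identification of $\rho_a$ and $\rho_b$ with the specific matrices $D_a$ and $D_b$ in the convention above can be computed directly from the action of a right-handed Dehn twist along $a$ (resp. $b$) on the homology classes: $\rho_a$ fixes $[a]$ and sends $[b]$ to $[a]+[b]$, giving $\Psi(\rho_a)=D_a$, and analogously $\Psi(\rho_b)=D_b$. Since $\{D_a,D_b\}$ is a well-known generating set of $\SL$, this also gives a second proof of surjectivity that does not rely on the linear section. Combining these steps yields the stated isomorphism.
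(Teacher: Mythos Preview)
The paper does not actually prove this proposition: immediately before it, the text reads ``We recall the following fact, and refer to \cite{MCG} for details,'' and no argument is given. Your proposal is correct and is precisely the standard proof one finds in Farb--Margalit: define $\Psi$ via the action on $H_1$, land in $\SL$ by the degree--determinant argument, get surjectivity from the linear action of $\SL$ on $\RR^2/\Z^2$, and get injectivity from the fact that $T$ is a $K(\Z^2,1)$ together with the Baer--Epstein theorem that homotopic surface homeomorphisms are isotopic (indeed the paper itself records this last ingredient later as Theorem~\ref{thm:htpic-iff-istpic}). Your closing computation of $\Psi(\rho_a)=D_a$ and $\Psi(\rho_b)=D_b$ is not part of the proposition as stated but is consistent with what the paper asserts just afterwards in~\eqref{eq:Da,Db}.
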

\begin{rem}
	If a matrix $A$ corresponds to a homeomorphism $\varphi_A$, then the composition rule is $\varphi_B\circ\varphi_A=\varphi_{BA}$, as usual.
\end{rem}

In particular, the right-handed Dehn twists on the curves $a$ and $b$ from Figure~\ref{fig:Dehn-twists} correspond to the matrices
	\begin{equation}\label{eq:Da,Db}
		D_a\coloneqq\matrica{ 1 & 1 \\ 0 & 1 } \quad {\rm and}\quad
		D_b\coloneqq\matrica{ 1 & 0 \\ -1 & 1 }.
	\end{equation}
\begin{figure}[htbp!]
    \centering
\begin{tikzpicture}[baseline=0cm]
	\draw[semithick] (0,0) ellipse (1.8cm and 1cm);
	\draw[semithick] 
			(-0.8,0.05) to[distance=0.5cm,out=-30,in=-150] (0.8,0.05)
			(-0.6,-0.04) to[distance=0.5cm,out=30,in=150] (0.6,-0.04);
	\draw[red, thick] 
			(0,-0.15) to[distance=0.25cm,out=-160,in=160] (0,-1) 
			(-0.3,-0.3) node{\footnotesize$a$};
	\draw[red, thick, densely dotted] 
			(0,-0.15) to[distance=0.25cm,out=-10,in=10] (0,-1);
	\draw[blue, thick] 
			(-0.17,-1) to[distance=0.4cm,out=90,in=-90]
			(-1.3,0) to[distance=0.8cm,out=90,in=90] 
			(1.3,0) to[distance=0.6cm,out=-90,in=-140]
			(0.15,-0.13)
			(0.7,-0.6) node{\footnotesize$b{+}a$};
	\draw[blue, thick, densely dotted]
			(-0.15,-1) to[distance=0.4cm,out=20,in=-70] (0.15,-0.13);
\end{tikzpicture}
\;$\xleftarrow{\rho_a}$
\begin{tikzpicture}[baseline=0cm]
	\draw[semithick] (0,0) ellipse (1.8cm and 1cm);
	\draw[semithick] 
			(-0.8,0.05) to[distance=0.5cm,out=-30,in=-150] (0.8,0.05)
			(-0.6,-0.04) to[distance=0.5cm,out=30,in=150] (0.6,-0.04);
	\draw[red,thick] 
			(0,-0.15) to[distance=0.25cm,out=-160,in=160] (0,-1) 
			(-0.3,-0.3) node{\footnotesize$a$};
	\draw[red,thick,densely dotted] 
			(0,-0.15) to[distance=0.25cm,out=-10,in=10] (0,-1);
	\draw[blue,thick] 
			(0,0) ellipse (1.3cm and 0.6cm)
			(1.1,0) node{\footnotesize$b$};
\end{tikzpicture}\;$\xrightarrow{\rho_b}$
\begin{tikzpicture}[baseline=0cm]
	\draw[semithick] (0,0) ellipse (1.8cm and 1cm);
	\draw[semithick] 
			(-0.8,0.05) to[distance=0.5cm,out=-30,in=-150] (0.8,0.05)
			(-0.6,-0.04) to[distance=0.5cm,out=30,in=150] (0.6,-0.04);
	\draw[red,thick] 
			(0,-0.15) to[distance=0.5cm,out=-150,in=-90]
			(-1.1,0) to[distance=0.8cm,out=90,in=90]
			(1.5,0) to[distance=0.7cm,out=-90,in=150] (0,-1) 
			(0.55,-0.76) node{\footnotesize$a{-}b$};
	\draw[red,thick,densely dotted] 
			(0,-0.15) to[distance=0.25cm,out=-10,in=10] (0,-1);
	\draw[blue,thick] 
			(0,0) ellipse (1.3cm and 0.6cm)
			(1.1,0) node{\footnotesize$b$};
\end{tikzpicture}
    \caption{The right-handed Dehn twists on $a$ and $b$.}
    \label{fig:Dehn-twists}
\end{figure}
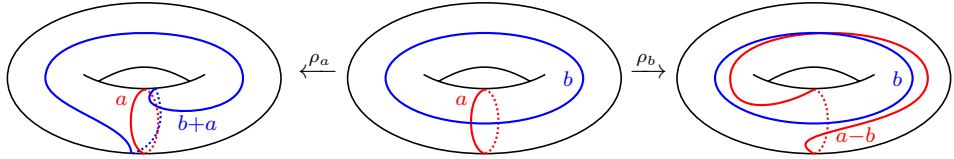

We will also make use of the homeomorphisms
	\begin{equation}\label{eq:E,J}
		E\coloneqq\matrica{1 & 0\\ 0 & 1},\quad J\coloneqq\matrica{ 0 & 1 \\ 1 & 0 },
	\end{equation}
where $J$ is orientation reversing. We have $J^2=E$ and for $P\neq Q$ in $\{D_a,D_b\}$:
\begin{equation}\label{eq:J}
	JP^{\pm}J=Q^{\mp}.
\end{equation}
\begin{rem}\label{row-column}
	If $A,\wh{A}\in\SL$ have the same first column, then $\wh{A}=A (D_a)^m$ for some $m\in \Z$. If $A,\wh{A}\in\SL$ have the same first row, then $\wh{A}=(D_b)^m A$ for some $m\in \Z$. The proofs follow from computing $A^{-1}\wh{A}$ and $\wh{A}A^{-1}$ respectively.
\end{rem}

The following can be extracted from~\cite[Chapter~9]{Karpenkov} for example. See also \cite{Fosse} for an alternative formula.
\begin{prop}\label{prop:SL-presentation}
	There is a presentation
\[
		\SL=\langle D_a,D_b \mid D_aD_bD_a=D_bD_aD_b,\; (D_aD_b)^6=1\rangle.
\]
	For $A=\matrica{ p & r \\ q & s }\in\SL$ let $[m_1,\dots,m_k]$ be a regular negative continued fraction expansion for $p/q$, namely:
 \[
 	\frac{p}{q}=m_1-\frac{1}{m_2-\frac{1}{\dots-\frac{1}{m_k}}} \,,
 \]
 with $m_1\in\Z$ and $m_i\in\Z_{>0}$ for $i=2,\dots,k$. Then the matrix
	\begin{equation}\label{eq:A-hat}
		\wh{A}=\mathrm{sgn}(p) D_a^{m_1-1}D_b^{-1}D_a^{m_2-2}\cdots D_b^{-1} D_a^{m_k-2}D_b^{-1}.
	\end{equation}
has the same column as $A$, so $\wh{A}^{-1}A=D_a^m$ for some $m\in\Z$. In particular, $A=\wh{A}(\wh{A}^{-1}A)$ is an expression for $A$ in terms of the generators $D_a$ and $D_b$.
\end{prop}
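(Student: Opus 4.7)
I would handle the two parts separately.

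\emph{The presentation.} A direct multiplication using~\eqref{eq:Da,Db} verifies both relations: $D_aD_bD_a=\matrica{0 & 1\\ -1 & 0}=D_bD_aD_b$ and $(D_aD_b)^3=-E$. To show completeness of these relations I would reduce to the classical amalgamated product description $SL(2,\Z)\cong\Z/4*_{\Z/2}\Z/6$. Working in the abstract group $G=\langle a,b\mid aba=bab,\;(ab)^6\rangle$, set $s\coloneqq aba$ and $t\coloneqq ab$. Using only the braid relation one has $t^3=(ab)^3=aba\cdot bab=aba\cdot aba=s^2$, so $t^6=1$ implies $s^4=1$. Conversely $t^{-1}s=b^{-1}a^{-1}\cdot aba=a$ and $st^{-1}=aba\cdot b^{-1}a^{-1}=bab\cdot b^{-1}a^{-1}=b$, so $\{s,t\}$ generates $G$, and $G$ is a quotient of the abstract group $\langle s,t\mid s^4,t^6,s^2t^{-3}\rangle\cong\Z/4*_{\Z/2}\Z/6$. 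Since this last group is classically isomorphic to $SL(2,\Z)$ via the images of $s,t$, the composite $\Z/4*_{\Z/2}\Z/6\twoheadrightarrow G\to SL(2,\Z)$ is an isomorphism, forcing both arrows to be isomorphisms as well.

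\emph{The explicit formula.} I would argue by induction on the length $k$ of the regular negative continued fraction. The key computation, by direct multiplication using~\eqref{eq:Da,Db}, is
\[
	D_a^{m-1}D_b^{-1}D_a^{-1}=\matrica{m & -1\\ 1 & 0},
\]
whose action on a column $(p',q')^T$ produces $(mp'-q',p')^T$, which is exactly the recurrence governing the convergents of a regular negative continued fraction. In the base case $k=1$ one has $p/q=m_1$, and the first column of $\operatorname{sgn}(p)D_a^{m_1-1}D_b^{-1}$ computes to $(p,q)^T$ directly. For the inductive step, let $\wh{B}_{k-1}$ denote the analogous formula associated to $[m_2,\ldots,m_k]$ with convergent $p'/q'$. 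Since the leading exponent shifts from $m_2-1$ in $\wh{B}_{k-1}$ to $m_2-2$ in $\wh{A}$, one has
\[
	\wh{A}=\frac{\operatorname{sgn}(p)}{\operatorname{sgn}(p')}\,D_a^{m_1-1}D_b^{-1}D_a^{-1}\,\wh{B}_{k-1},
\]
and the displayed identity above propagates the first column from $(p',q')^T$ to $(p,q)^T$. The final assertion that $A=\wh{A}(\wh{A}^{-1}A)$ then realises $A$ as a word in $D_a,D_b$ via Remark~\ref{row-column}, since sharing a first column forces $\wh{A}^{-1}A=D_a^m$ for some integer $m$.

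The main obstacle I anticipate is the sign bookkeeping in the inductive step: the ratio $\operatorname{sgn}(p)/\operatorname{sgn}(p')$ must match the recurrence $p=m_1 p'-q'$, which I would verify by a short case analysis exploiting the positivity of $m_i$ for $i\geq 2$. The only nontrivial external input is the classical identification $SL(2,\Z)\cong\Z/4*_{\Z/2}\Z/6$ used for injectivity in part (i), which I would cite rather than reprove.
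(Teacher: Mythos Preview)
The paper does not actually prove this proposition: the sentence preceding it says the result ``can be extracted from~\cite[Chapter~9]{Karpenkov}'' and points to~\cite{Fosse} for an alternative, and no proof environment follows. Your proposal therefore supplies what the paper deliberately omits. Both halves of your argument are the standard ones and are correct in outline: for the presentation you pass to the generators $s=aba$, $t=ab$ and reduce to the classical amalgam $\Z/4*_{\Z/2}\Z/6$; for the formula you use the convergent recurrence encoded by the identity $D_a^{m-1}D_b^{-1}D_a^{-1}=\matrica{m & -1\\ 1 & 0}$. There is nothing to compare method-wise, since the paper offers none.

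One correction to your self-diagnosis: the sign issue you anticipate in the inductive step is already present in the base case. With $(p,q)=(-3,1)$ one has $m_1=-3$, and the first column of $\mathrm{sgn}(p)\,D_a^{m_1-1}D_b^{-1}$ is $(3,-1)^T\neq(p,q)^T$. Tracing your recurrence shows that the stated formula with the factor $\mathrm{sgn}(p)$ yields first column $(p,q)^T$ precisely when $p$ and $q$ share a sign; in the base case the correct scalar is $\mathrm{sgn}(q)$ rather than $\mathrm{sgn}(p)$. This is a minor imprecision in the formula as stated (harmless for the paper's purposes, since one can always replace $A$ by $(-E)A=(D_aD_b)^3A$ to arrange $pq>0$), not a defect in your method.
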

% mathematica CODE for cont.fraction of x with n steps and sign b:
% continuedFraction[x_, n_, b_: 1] := Sign[b] Reap[NestWhile[b/# - Sow@Floor[b/#] &, Abs[b]/x, # != 0 &, 1, n]][[2, 1]];

\begin{rem}
	If $p/q$ has an expansion $[m_1,\dots,m_k]$ that yields the matrix $\wh{A}$ as in~\eqref{eq:A-hat}, then $[m_1,\dots,m_k+1,1]$ is also an expansion, and \eqref{eq:A-hat} gives $\wh{A}D_a^{-1}$ instead.
\end{rem}

%%%
\subsection{Torus bundles}
\label{subsec:torus-bundles}

We will need some facts about torus bundles over the circle, that are not readily found in the literature.

Given $A\in\SL$ we can form the 3-manifold $Bun_A$ that is a bundle over $S^1$ with fibres homeomorphic to $T$, and with monodromy $A$. It is defined by
\begin{equation}\label{eq:BunA}
	Bun_A\coloneqq \faktor{T\tm I}{(Ax,0)\sim (x,1)}.
\end{equation}
The product orientation on $T\tm I$ induces an orientation on $Bun_A$.
\begin{lem}
\label{lem:BunA-as-gluing}
	We have $Bun_A\cong T\tm I\cup_{(A\sqcup E)\circ \swap} T\tm I$.
\end{lem}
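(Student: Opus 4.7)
The plan is to apply the Self-Gluing Lemma (Lemma~\ref{lem:self-gluing}) directly, with $\Sigma=T$, $f=A$ and $g=E$. Both $A\in\SL$ and $E$ are orientation preserving, so the lemma applies and yields
\[
	T\tm I\cup_{(A\sqcup E)\circ\swap} T\tm I \;\cong\; \text{self-gluing of }(E:T\hra T\tm I\hla T:A),
\]
where the target and source embeddings are $(E,1):x\mapsto(x,1)$ and $(A,0):x\mapsto(A(x),0)$, respectively, in accordance with convention~\ref{conv:Cyl}.

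Next I would unwind the definition of self-gluing from~\ref{conv:self-gluing}: for the cobordism just displayed, this takes the quotient of $T\tm I$ identifying the image of the source with the image of the target, namely $(A(x),0)\sim(x,1)$ for every $x\in T$. Comparing with~\eqref{eq:BunA}, this is precisely the defining relation of $Bun_A$, so as unoriented manifolds the two are homeomorphic.

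Finally I would verify that the orientations agree. By convention~\ref{conv:boundary}, $T\tm I$ carries the product orientation, and the identification $(A(x),0)\sim(x,1)$ glues the orientation-reversing boundary component $T\tm\{0\}$ to the orientation-preserving component $T\tm\{1\}$ via the orientation preserving map $A$; this is exactly the orientation condition used in~\ref{conv:gluing}, so both sides receive matching orientations induced from $T\tm I$.

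The only substantive step is the application of Lemma~\ref{lem:self-gluing}; the remainder is bookkeeping about the defining equivalence relation. The potential pitfall is a swap-versus-no-swap confusion in $(A\sqcup E)\circ\swap$ (cf.\ conventions~\ref{conv:disj-incl} and~\ref{conv:swap}), so I would double-check that under $(A\sqcup E)\circ\swap$ one has $(x,0)\mapsto(x,1)$ and $(x,1)\mapsto(A(x),0)$, matching the two embeddings $(E,1)$ and $(A,0)$ that appear in the self-gluing decomposition of the lemma.
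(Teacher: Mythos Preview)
Your proof is correct and follows exactly the same approach as the paper: apply Lemma~\ref{lem:self-gluing} with $f=A$, $g=E$, then unwind the self-gluing definition to recover the defining relation~\eqref{eq:BunA} of $Bun_A$. Your added orientation and swap-convention checks are extra bookkeeping the paper omits, but the core argument is identical.
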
 
\begin{proof}
	By the Self-Gluing Lemma~\ref{lem:self-gluing} the right hand side is equal to the self-gluing of the cobordism $(E : T\hra T\tm I\hla T : A)$. By definition, this is the quotient of $T\tm I$ by $(Ax,0)\sim(Ex,1)$ for every $x\in T$, so the claim follows using $Ex=x$.
\end{proof}

\begin{lem}
\label{lem:any-BunA}
	A torus bundle over $S^1$ has the form $Bun_A$ for some $A\in\SL$.
\end{lem}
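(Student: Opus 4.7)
The plan is to construct a monodromy for an arbitrary oriented torus bundle $p:E\to S^1$, apply Proposition~\ref{prop:MCG} to replace it by $\varphi_A$ for some $A\in\SL$, and then observe that the resulting mapping torus depends only on the isotopy class of the monodromy. Viewing $S^1$ as $I/\{0\sim 1\}$, pull $p$ back along the quotient $I\to S^1$ to obtain an oriented bundle $\wt p:\wt E\to I$; since $I$ is contractible, $\wt p$ admits an orientation-preserving trivialization $\wt E\cong T\tm I$. The two restrictions $i_0,i_1:T\to p^{-1}(*)$ to the boundary tori of $\wt E$ are then homeomorphisms, and the composite $\varphi\coloneqq i_0^{-1}\circ i_1: T\to T$ is orientation-preserving, since the orientations on $E$ and $T\tm I$ agree along the gluing. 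By construction $E\cong (T\tm I)/\!\!\sim$, where $(x,1)\sim(\varphi(x),0)$.

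By Proposition~\ref{prop:MCG}, $\varphi$ is isotopic to $\varphi_A$ for some $A\in\SL$; equivalently, $\varphi_A^{-1}\circ\varphi$ is isotopic to $id_T$ through orientation-preserving homeomorphisms. Pick such an isotopy $(H_s)_{s\in I}$ with $H_0=id$ and $H_1=\varphi_A^{-1}\circ\varphi$, and define
\[
	\Phi:T\tm I\to T\tm I,\qquad \Phi(x,t)\coloneqq(H_t(x),t).
\]
Then $\Phi$ is an orientation-preserving self-homeomorphism of $T\tm I$, and for every $x\in T$ one has
\[
	\Phi(x,1) = (\varphi_A^{-1}\varphi(x),1) \;\sim\; (\varphi(x),0) = \Phi(\varphi(x),0)
\]
in $Bun_A$, using that $(\varphi_A^{-1}\varphi(x),1)\sim(\varphi_A\cdot \varphi_A^{-1}\varphi(x),0)=(\varphi(x),0)$ in $Bun_A$. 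Hence $\Phi$ descends to an orientation-preserving homeomorphism $E\cong Bun_A$, as required.

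The only genuinely delicate step is the existence of the orientation-preserving trivialization $\wt E\cong T\tm I$, which follows from standard fiber-bundle theory applied to the contractible base $I$ (together with the fact that fiber and total space are oriented). Everything else is a routine unwinding of definitions; the crucial structural input is the identification $\MCG(T)\cong\SL$ from Proposition~\ref{prop:MCG}.
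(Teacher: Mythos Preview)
Your proof is correct. Both your argument and the paper's follow the same core idea---trivialize the bundle over a contractible base and read off the monodromy---but the implementations differ slightly. The paper decomposes $S^1$ into two semicircles, trivializes over each, and obtains two gluing homeomorphisms $B,C\in\SL$; it then invokes Lemma~\ref{lem:self-gluing} to rewrite the resulting double gluing as the self-gluing $(Bx,0)\sim(Cx,1)$, whence $X\cong Bun_{BC^{-1}}$. You instead pull back along the single quotient $I\to S^1$, extract one monodromy $\varphi$ directly, and then give an explicit isotopy argument showing that the mapping torus depends only on the class of $\varphi$ in $\MCG(T)\cong\SL$. Your route is marginally more self-contained and makes the isotopy-invariance step visible; the paper's version is tailored to its gluing-lemma framework and absorbs that step by treating the gluing maps as matrices from the outset.
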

\begin{proof}
	Any fibre bundle over the interval is trivial, i.e.\ homeomorphic to $T\tm I$. This means that for a fibre bundle $X$ over $S^1$, the restrictions $X_1,X_2$ of $X$ to two semicircles are of the shape $T\tm I$. We obtain $X$ by gluing to $T\tm\{i\}\subset T\tm I\cong X_1$ the end $T\tm\{1-i\}\subset T\tm I\cong X_2$, for $i=0,1$ via two homeomorphisms $B,C\in\SL$. In other words, 	
	\[
		 X\cong T\tm I\cup_{(B\sqcup C)\circ \swap} T\tm I.
	\]
	By Lemma~\ref{lem:self-gluing} this is equivalent to the self-gluing of $(C: T\hra T\tm I\hla T :B)$.
	Thus, $X$ is the quotient of $T\tm I$ by $(Bx,0)\sim (Cx,1)$ for all $x\in T$, or equivalently by $(BC^{-1}y,0)\sim (y,1)$ for all $y\in T$. This gives $Bun_{BC^{-1}}$ by definition~\eqref{eq:BunA}.
\end{proof}

% ALTERNATIVNI DOKAZ
%	{\blue Equivalently, we can first apply $C\tm id_I$ to the whole second copy, and then glue one end via $BC^{-1}$ and the other via $E$. In other words, there is a homeomorphism
%	\[
%	\blue \Phi: X\cong T\tm I\cup_{(B\sqcup C)\circ \swap} T\tm I\to T\tm I\cup_{(BC^{-1}\sqcup E)\circ \swap} T\tm I\eqqcolon Bun_{BC^{-1}},
%	\]
%	given for $(x,t)\in T\tm I$ by $\Phi((x,t),0)=((x,t),0)$ and $\Phi((x,t),1)=((Cx,t),1)$. This is well defined since
%\begin{align*}
%	\text{ in the source }\mapsto & \text{ in the target } \\
%	((x,0),1)\sim((Cx,1),0)\mapsto &((Cx,0),1)\sim((Cx,1),0)\iff ((y,0),1)\sim((y,1),0),\\
%	((x,1),1)\sim((Bx,0),0)\mapsto &((Cx,1),1)\sim((Bx,0),0)\iff ((y,1),1)\sim((BC^{-1}y,0),0),
%\end{align*}
%and is clearly an orientation preserving homeomorphism.
%}

We can prove the following directly; for an alternative proof see Proposition~\ref{prop:classify-new-proof}.
\begin{lem}
\label{lem:conjugation}
	If $A,C\in \SL$ we have orientation preserving homeomorphisms
\[
	Bun_{CAC^{-1}} \cong Bun_A \cong Bun_{C(JA^{-1}J)C^{-1}}.
\]
\end{lem}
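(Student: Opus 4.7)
The plan is to produce explicit orientation-preserving homeomorphisms of $T\tm I$ descending to the claimed homeomorphisms of bundles, using the presentation $Bun_A=(T\tm I)/\!\sim$ with $(Ax,0)\sim(x,1)$.

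\medskip

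For the first homeomorphism $Bun_A\cong Bun_{CAC^{-1}}$, I would consider
\[
	\phi:T\tm I\ra T\tm I,\quad (x,t)\mapsto (Cx,t).
\]
Since $C\in\SL$ preserves orientation on $T$, the map $\phi$ preserves the product orientation. Under $\phi$, the relation $(Ax,0)\sim(x,1)$ defining $Bun_A$ becomes $(CAx,0)\sim(Cx,1)$, i.e., setting $y=Cx$, the relation $(CAC^{-1}y,0)\sim(y,1)$ defining $Bun_{CAC^{-1}}$. Hence $\phi$ descends to an orientation-preserving homeomorphism $Bun_A\to Bun_{CAC^{-1}}$.

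\medskip

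For $Bun_A\cong Bun_{JA^{-1}J}$ I would use a "flip" of the bundle:
\[
	\psi:T\tm I\ra T\tm I,\quad (x,t)\mapsto (Jx,1-t).
\]
Here $J$ is orientation reversing on $T$ and $t\mapsto 1-t$ is orientation reversing on $I$, so the product $\psi$ preserves the product orientation on $T\tm I$. Under $\psi$, the relation $(Ax,0)\sim(x,1)$ becomes $(JAx,1)\sim(Jx,0)$. Setting $y=Jx$ (so $x=Jy$ since $J^2=E$) this reads $(y,0)\sim(JAJy,1)$, which defines the same bundle as $(JA^{-1}Jz,0)\sim(z,1)$ after the substitution $z=JAJy$. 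Thus $\psi$ descends to an orientation-preserving homeomorphism $Bun_A\to Bun_{JA^{-1}J}$.

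\medskip

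Finally, combining these two we obtain
\[
	Bun_A \;\cong\; Bun_{JA^{-1}J} \;\cong\; Bun_{C(JA^{-1}J)C^{-1}},
\]
where the first step is the flip $\psi$ and the second is conjugation by $C$ as in the previous paragraph. The only potential subtlety is keeping track of the orientations and the gluing conventions, but the parity of orientation-reversing maps in $\psi$ works out precisely because $J$ and the interval-reversal are both orientation reversing.
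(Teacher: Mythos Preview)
Your proof is correct and follows essentially the same approach as the paper: both use the explicit maps $(x,t)\mapsto(Cx,t)$ for the conjugation homeomorphism and the flip $(x,t)\mapsto(Jx,1-t)$ for the $JA^{-1}J$ homeomorphism, then combine the two. The only cosmetic difference is that the paper checks well-definedness by verifying that images of identified points coincide in the target quotient, whereas you phrase it as the defining relation being carried to the target relation; these are the same verification.
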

\begin{proof}
	The first homeomorphism is given by
	\[\begin{tikzcd}[column sep=large, row sep=small]
		Bun_A\dar[equals]\rar{\Phi_1} & Bun_{CAC^{-1}}\dar[equals]\\
		\faktor{T\tm I}{(Ax,0)\sim (x,1)}\rar{C\tm id_I} &
		\faktor{T\tm I}{(CAC^{-1}y,0)\sim (y,1)}
	\end{tikzcd}
	\]
	In other words, $\Phi_1(x,t)=(Cx,t)$. This is well defined since $\Phi_1(x,1)=(Cx,1)$ agrees with $\Phi_1(Ax,0)=(CAx,0)$, by putting $x=C^{-1}y$.
	
	Now it suffices to show that $JA^{-1}J$ gives the same bundle as $A$. For this we have a homeomorphism
	\[\begin{tikzcd}[column sep=large]
		Bun_A\dar[equals]\rar{\Phi_2} & Bun_{JA^{-1}J}\dar[equals]\\
		\faktor{T\tm I}{(Ax,0)\sim (x,1)}\rar{J\tm (1-id_I)} &
		\faktor{T\tm I}{(JA^{-1}Jy,0)\sim (y,1)}
	\end{tikzcd}
	\]
	given by $\Phi_2(x,t)=(Jx,1-t)$. This is well defined since $\Phi_2(x,1)=(Jx,0)$ agrees with $\Phi_2(Ax,0)=(JAx,1)$, by putting $x=A^{-1}Jy$.
\end{proof}

%\begin{lem}[For later use]
%	$A^{\pm1}$ is conjugate in $\SL$ to $B$ or $JBJ$ if and only if $A^{\pm1}$ is conjugate to $B$ in $GL(2,\Z)$.
%\end{lem}
%\begin{proof}
%	If $B=CA^{\pm1}C^{-1}$ for $X\in GL(2,\Z)$, then $JBJ=(JX)A^{\pm1}(JX)^{-1}$ and $JX\in\SL$.
%\end{proof}

In fact, these are precisely all the relations among the torus bundles, thanks to the following theorem. This can be found in \cite[Lemma~6.2]{N81} without proof, and in \cite[Proposition~1.2, Remark~1.3]{F13} with a different proof.
\begin{thm}
\label{thm:torus-bundles}
	The map $A\mapsto Bun_A$ is a bijection
	\[\begin{tikzcd}
		\faktor{\SL}{\substack{A\sim CAC^{-1}\\A\sim JA^{-1}J}}\rar & \faktor{\{\text{oriented torus bundles over }S^1\}}{\cong}.
	\end{tikzcd}
	\]
\end{thm}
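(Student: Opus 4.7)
The plan is to prove surjectivity from Lemma~\ref{lem:any-BunA}, well-definedness from Lemma~\ref{lem:conjugation}, and then tackle injectivity via the induced automorphism of the fundamental group. So the only real content is showing that an orientation-preserving homeomorphism $\Psi\colon Bun_A\xrightarrow{\cong} Bun_B$ forces $B=CAC^{-1}$ or $B=C(JA^{-1}J)C^{-1}$ for some $C\in\SL$.

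First I would unpack the structure of $\pi_1(Bun_A)\cong \Z^2\rtimes_A\Z$, coming from the fibration $T\hookrightarrow Bun_A\to S^1$. Let $F<\pi_1(Bun_A)$ denote the fiber subgroup $\Z^2$. The key geometric input is that $F$ is characteristic in $\pi_1(Bun_A)$, so $\Psi_*$ restricts to an isomorphism $F\to F'$, where $F'<\pi_1(Bun_B)$ is the corresponding fiber subgroup, and descends to an isomorphism of the quotients $\Z\to\Z$. This identifies a matrix $C\in\GL$ (the restriction of $\Psi_*$ to fiber subgroups, read off in the generators $a,b$) and a sign $\varepsilon\in\{\pm1\}$ (the action on the base $\Z$). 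Compatibility with the semidirect product action forces $B^{\varepsilon}=CAC^{-1}$ in $\GL$.

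Next I would extract the orientation condition. The product orientation of $T\tm I$ descends to $Bun_A$, and if the fiberwise action of $\Psi_*$ is $C\in \GL$ while the action on the base $S^1$ has degree $\varepsilon$, then $\Psi$ being orientation-preserving gives $\det(C)\cdot\varepsilon=+1$. This leaves exactly two cases. If $\det C=+1$ and $\varepsilon=+1$, then $C\in\SL$ and $B=CAC^{-1}$, matching the first relation. If $\det C=-1$ and $\varepsilon=-1$, write $C=JC'$ with $C'\in\SL$; then $B=CA^{-1}C^{-1}=JC'A^{-1}C'^{-1}J$, and rearranging as $(JC'J)(JA^{-1}J)(JC'J)^{-1}$ with $JC'J\in\SL$ matches the composition of the second relation with an $\SL$-conjugation. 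Hence $A$ and $B$ represent the same class in the quotient.

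The main obstacle is the characteristicity of $F$ in $\pi_1(Bun_A)$, which fails or is delicate precisely when $A$ has $|\tr A|\le2$, because then $Bun_A$ admits Euclidean or Nil geometry and can carry non-conjugate torus fibrations (most visibly for $A=E$, where $Bun_A\cong T^3$). I would handle the generic hyperbolic case $|\tr A|>2$ by noting that $F$ is the unique maximal abelian normal subgroup of the (Sol) fundamental group, so the characteristicity is automatic. The remaining finitely many conjugacy classes of elliptic and parabolic $A$ would be treated by direct inspection of the resulting Seifert-fibered structures (using Waldhausen's classification, or the explicit forms of Euclidean and Nil manifolds), checking the finitely many special coincidences and verifying they are all accounted for by the two stated relations. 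Alternatively, I would invoke the classification of torus bundles as stated in~\cite[Prop.~1.2]{F13} and only contribute the orientation refinement given in the paragraph above.
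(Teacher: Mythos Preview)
Your approach is a legitimate alternative to the paper's, and the algebra you extract from $\Psi_*$ is correct: once $\Psi_*(F_A)=F_B$, the relation $B^\varepsilon=CAC^{-1}$ drops out exactly as you say, and the case split on $(\det C,\varepsilon)$ lands on the two stated relations. The paper instead argues geometrically: it takes $\Sigma=\Psi(T\times\{0\})$, an incompressible torus in $Bun_B$, and invokes \cite[Lemma~2.7]{Hatcher} to conclude that either $\Sigma$ is isotopic to a fibre---so $\Psi$ lifts to a self-homeomorphism of $T\times I$ and one reads off $C\in\SL$ or $CJ$ directly from its boundary behaviour---or $B$ is conjugate to $\pm D_a^n$, in which case the Seifert fibred structure still supplies a fibre-preserving representative. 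This handles all monodromies uniformly, without splitting on $|\tr A|$, and never leaves the topological category.

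Two points to tighten. First, the parabolic conjugacy classes $\pm D_a^n$, $n\neq0$, are \emph{infinitely} many, not finitely many, and the fibre subgroup $F$ is \emph{not} characteristic there: for $A=D_a^n$ the group $G$ is two-step nilpotent with $G/Z(G)\cong\Z^2$, and the automorphism exchanging the images of $e_2$ and $t$ while sending $e_1\mapsto -e_1$ does not preserve $F$. So your ``direct inspection'' for the Nil case carries real weight and ends up using the same Seifert input as the paper's case~2). Second, the identity $\det(C)\cdot\varepsilon=+1$ is phrased as though $\Psi$ were already fibre-preserving; working purely from $\Psi_*$ you should either compute the induced action on $H_3(\pi_1)\cong\Z$ via the Lyndon--Hochschild--Serre spectral sequence (where it is indeed $\det(C)\cdot\varepsilon$) or first promote $\Psi$ to a fibre-preserving map. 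Both fixes are routine, but as written that step is underspecified.
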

\begin{proof}
	By Lemma~\ref{lem:conjugation} the displayed map is well defined, and by Lemma~\ref{lem:any-BunA} it is surjective. To show that it is injective we follow closely the proof from Hatcher's Lectures~\cite{Hatcher} of a similar, but non-oriented statement. Let 
	\[
	\Phi:Bun_A\to Bun_B
	\]
	be an orientation preserving homeomorphism, and consider $\Sigma\coloneqq \Phi(T\tm\{0\})$. This is an incompressible surface $\Sigma\subset Bun_B$ because $\Phi$ is a homeomorphism, and $T\tm\{0\}$ is incompressible in $Bun_A$. Then \cite[Lemma~2.7]{Hatcher} says that $\Sigma$ is either
	\begin{enumerate}[label=\arabic*)]
		\item isotopic to a torus fibre in $Bun_B$, or
		\item one can assume that $B$ is conjugate to one of the matrices $\pm\matrica{ 1 & n \\ 0 & 1 }$.
	\end{enumerate}
Consider the case 1). Without loss of generality we can assume that $\Sigma$ is the fibre in $Bun_B$ given by $T\tm\{0\}$. In other words, $\Phi$ sends $T\tm\{0\}$ to itself, so we have a well-defined self-homeomorphism $\wt{\Phi}$ of $T\tm I$ that lifts $\Phi$ under the quotient maps, i.e.\ makes the following square commute:
\[\begin{tikzcd}
	T\tm I\rar{\wt{\Phi}}\dar & T\tm I\dar\\
	Bun_A\rar{\Phi} & Bun_B.
\end{tikzcd}
\]
Then we have either $\wt{\Phi}|_0:T\tm\{0\}\to T\tm\{0\}$ or $\wt{\Phi}|_0:T\tm\{0\}\to T\tm\{1\}$. Since $\Phi$ is orientation preserving, in the first case $\wt{\Phi}|_0$ has to be an orientation preserving homeomorphism of the torus, so represented by an element $C\in\SL$. Similarly, in the second case the orientation of the interval is reversed, so the orientation of $T$ has to be as well, so $\wt{\Phi}|_0$ is represented by $CJ$ for some $C\in\SL$. Moreover, $\wt{\Phi}$ is a pseudo-isotopy from $\wt{\Phi}|_0$ to $\wt{\Phi}|_1$, so they are represented by the same matrix. Therefore, we have one of the following two cases:
\[\begin{tikzcd}
	T\tm \{1\}\rar{C}\dar{A} & T\tm \{1\}\dar{B}
														& T\tm \{1\}\rar{CJ}\dar{A} & T\tm \{0\}\\
	T\tm \{0\}\rar{C} & T\tm \{0\}
														& T\tm \{0\}\rar{CJ} & T\tm \{1\}\uar{B}
\end{tikzcd}
\]
That is, either $B=CAC^{-1}$ or $B=CJA^{-1}JC^{-1}$. Since these are both contained in the relations in the source, the desired map is injective in this case.

For the case 2), we observe that the proof of \cite[Theorem~2.6]{Hatcher} shows that $Bun_B$ is a Seifert fibered manifold, and that there is a choice of a homeomorphism $\Phi$ for which $\Sigma$ is a torus fibre in $Bun_B$. Then the case 1) finishes the proof.
\end{proof}

\begin{rem}
	Note that there is a homeomorphism
	\[
		Bun_A\coloneqq \faktor{T\tm I}{(Ax,0)\sim (x,1)}
		\ra \faktor{T\tm I}{(A^{-1}y,0)\sim (y,1)} \eqqcolon Bun_{A^{-1}}
	\]
	given by $(x,t)\mapsto (x,1-t)$ (put $y=Ax$). But this is \emph{orientation reversing}, which is not allowed under our equivalence relation on cobordisms; see~\ref{conv:equiv-of-cob}. This explains why Hatcher's non-oriented classification~\cite{Hatcher} reads: $Bun_A\cong Bun_B$ if and only if $A^{\pm1}$ is conjugate to $B$ in $GL(2,\Z)$.
\end{rem}

%%%%%%%%
\section{Some cobordisms and relations between them}
\label{sec:thick-tori}
%%%

Our subcategory $\bMCG$ of $\Cob$, mentioned in the introduction and defined in Section~\ref{sec:two-cats} below, will contain all cobordisms that are supported by the 3-manifold $T\tm I$, and its multiple copies. In particular, following the conventions \ref{conv:beta} and \ref{conv:Cyl} for writing cobordisms, we consider:
\begin{align}
	\text{the \emph{identity} }\quad  &\mj\coloneqq \big( E : T\hra T\tm I\hla T : E \big), \label{eq:Id}\\
	\text{the \emph{pairing} }\quad   &\b\coloneqq \big( \emptyset\hra T\tm I\hla  T\sqcup T : E\sqcup J \big),\label{eq:beta}\\
	\text{the \emph{copairing} }\quad &\g\coloneqq \big( J\sqcup E : T\sqcup T \hra T\tm I\hla \emptyset \big),\label{eq:gamma}\\
\intertext{and for any $A\in\SL$ its mapping cylinder:}
	\text{ the \emph{cylinder} }\quad &\Cyl_A\coloneqq \big( E :T\hra T\tm I\hla T: A \big),\label{eq:Cyl}\\
\intertext{as well as the following cobordism supported by two copies of the thick torus:}
	\text{the \emph{symmetry} }\quad &\tau\coloneqq \big( \swap : T\sqcup T \hra (T\sqcup T)\tm I\hla T\sqcup T : id \big),\label{eq:tau}
\end{align}
where we use the swap map $\swap$ from \ref{conv:disj}. To be explicit in $\tau$ we have
\[
(\swap,1):\begin{cases}
	(x,0)\mapsto & ((x,1),1)\\
	(x,1)\mapsto & ((x,0),1)
\end{cases}
\quad \text{and}\quad
\begin{rcases}
	((x,0),0) &\mapsfrom (x,0)\\
	((x,1),0) &\mapsfrom (x,1)
\end{rcases}: (id,0).
\]
\begin{rem}
	Note that $\Cyl_{E}=\mj$. Moreover, note that $\b$ and $\g$ use the map $J$ from \eqref{eq:E,J} on exactly one of the tori in $T\sqcup T$, as the two boundary components of $T\tm I$ are oriented oppositely; the choice of $J$ is without loss of generality.
\end{rem}

\begin{lem}[Snake Identities]
\label{lem:snake}
	We have equalities of cobordisms from $T$ to $T$:
	\[
		(\b\otm\mj)\circ(\mj\otm\g)\; = \;\mj\; = (\mj\otm\b)\circ(\g\otm\mj).
	\]
\end{lem}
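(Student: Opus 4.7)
My plan is to prove the first equality $(\b\otm\mj)\circ(\mj\otm\g)=\mj$ by directly unpacking the composition and recognising the resulting cobordism as a single thickened torus with identity boundary parametrisations; the second equality will follow by a mirror argument.

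First, using conventions~\ref{conv:otm} and~\ref{conv:composition}, I would write out both factors explicitly. The support of $\mj\otm\g$ is two disjoint copies of $T\tm I$, which I label $(T\tm I)_1$ (from $\mj$) and $(T\tm I)_2$ (from $\g$). Its source is $(T\tm\{0\})_1$ via $E$, and its three target tori are $(T\tm\{1\})_1$, $(T\tm\{0\})_2$, $(T\tm\{1\})_2$, parametrised in order by $E$, $J$, $E$. Symmetrically, $\b\otm\mj$ is supported on $(T\tm I)_3\sqcup (T\tm I)_4$, with target $(T\tm\{1\})_4$ via $E$ and three source tori $(T\tm\{0\})_3$, $(T\tm\{1\})_3$, $(T\tm\{0\})_4$ parametrised by $E$, $J$, $E$.

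Next I would apply the gluing formula from~\ref{conv:composition}. On each of the three matched pairs of boundary tori, the gluing map $\varphi=f^s_0\circ(f^t_1)^{-1}$ reduces to the identity on $T$, because the two sides share the same parametrisation ($E$, $J$, and $E$, respectively). With the outward-normal convention~\ref{conv:boundary}, each gluing pairs a $T\tm\{1\}$-face with a $T\tm\{0\}$-face and preserves orientation, so the four supporting thickened tori assemble end-to-end in the sequence $(T\tm I)_1, (T\tm I)_3, (T\tm I)_2, (T\tm I)_4$, producing a single $T\tm[0,4]$ that is orientation-preservingly homeomorphic to $T\tm I$ by rescaling. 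Under this rescaling, the unused source face $(T\tm\{0\})_1$ becomes $T\tm\{0\}\subset T\tm I$ parametrised by $E$, and the unused target face $(T\tm\{1\})_4$ becomes $T\tm\{1\}\subset T\tm I$ parametrised by $E$, exactly recovering~\eqref{eq:Id}, i.e.\ $\mj$.

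The main bookkeeping concern is the middle gluing: the $J$ in $\g$'s first target and the $J$ in $\b$'s second source cancel (via $J^2=E$) so that $\varphi$ is the identity, and this is precisely why the cobordism structure of $\b$ and $\g$ is designed with a $J$ to make the zigzag close up. An alternative route would iterate Corollaries~\ref{cor:collar1} and~\ref{cor:collar2} to absorb each glued cylinder one at a time into reparametrisations of $T\tm I$; I expect no deeper input than the definitions and $J^2=E$ is required for either approach.
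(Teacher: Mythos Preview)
Your argument is correct and follows essentially the same approach as the paper's proof: both unpack $\b\otm\mj$ and $\mj\otm\g$ into four labelled copies of $T\tm I$, compute that the three gluing maps are the identity on the $T$ factor (the paper writes this out as $f^s\circ(g^t)^{-1}$ explicitly), and then assemble the pieces end-to-end into a single $T\tm[0,4]\cong T\tm I$ with identity boundary parametrisations. The paper's labelling $M\tm\{0\},\dots,M\tm\{3\}$ and explicit rescaling map $h$ differ only cosmetically from your $(T\tm I)_1,\dots,(T\tm I)_4$ and your appeal to rescaling $[0,4]\to[0,1]$.
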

\begin{proof}
	We prove the first equality, and the proof of the second is similar. 
	Throughout the proof we write $T\sqcup T\sqcup T\coloneqq (T\tm\{0\})\cup (T\tm\{1\})\cup (T\tm\{2\})$ and $M\coloneqq T\tm I$. By $x$ we denote a point in $T$. From definitions we have
	\[
	\b\otm\mj= \big( f^t:T\hra M\tm\{0\}\cup M\tm\{1\}\hla T\sqcup T\sqcup T:f^s \big)
	\]
	with
\[
f^t: \begin{cases} 
	x\mapsto & (x,1,1)
\end{cases}
\qquad \text{and}\qquad
\begin{rcases}
	(x,0,0) &\mapsfrom (x,0)\\
	(Jx,1,0) &\mapsfrom  (x,1)\\
	(x,0,1) &\mapsfrom  (x,2)
\end{rcases}:f^s,
\]
	and
	\[
		\mj\otm\g=\big( g^t:T\sqcup T\sqcup T\hra M\tm\{2\}\cup M\tm\{3\}\hla T :g^s \big)
	\]
	with
\[
g^t:\begin{cases}
	(x,0)\mapsto & (x,1,2)\\
    (x,1)\mapsto & (Jx,0,3)\\
	(x,2)\mapsto & (x,1,3)
\end{cases}
\qquad \text{and}\qquad
\begin{rcases}
	(x,0,2)\mapsfrom x
\end{rcases}:g^s.
\]
	Composing these two cobordisms gives
	\[
	(\b\otm\mj)\circ(\mj\otm\g)= \big( f^t:T\hra X\hla T:g^s \big)
	\]
	with $X=\big( M\tm\{0\}\cup M\tm\{1\}\big) \cup_{f^s\circ (g^t)^{-1}} (M\tm\{2\}\cup M\tm\{3\})$
	and
\[
f^s\circ(g^t)^{-1}:\begin{cases}
    (x,1,2)\mapsto & (x,0,0) \\
	(x,0,3)\mapsto & (x,1,0) \\
	(x,1,3)\mapsto & (x,0,1)
\end{cases}
\]
	We define a map $h:X\to M=T\tm I$ by
\[h:\begin{cases}
	(x,t,2) \mapsto & (x,\tfrac{t}{4}) \\
	(x,t,0) \mapsto & (x,\tfrac{1+t}{4}) \\
	(x,t,3) \mapsto & (x,\tfrac{2+t}{4}) \\
	(x,t,1) \mapsto & (x,\tfrac{3+t}{4})
\end{cases}
\]
	This is a well-defined homeomorphism, such that $h\circ g^s(x)=h(x,0,2)=(x,0)$ and $h\circ f^t(x)=h(x,1,1)=(x,1)$. Therefore, $(\b\otm\mj)\circ(\mj\otm\g)$ is equivalent to $\mj$.
\end{proof}

Throughout the rest of this section we fix arbitrary matrices $A,B\in\SL$.

%%%%%%%%%%%
\subsection{Cylinders as cobordisms}
We record some properties of cobordisms~\eqref{eq:Cyl}.
\begin{lem}
\label{lem:cylinders}
	The following cobordisms are all equivalent
	\begin{align*}
			A^{-1}B :\;  T\hra  &T\tm I\hla  T \;: E,\\
		  	B :\;  T\hra  &T\tm I\hla  T \;: A,\\
			E :\;  T\hra  &T\tm I\hla  T \;: B^{-1}A.
	\end{align*}
	Moreover, the gluing of cobordisms is given by $\Cyl_{BA}=\Cyl_B\circ\Cyl_A$.
\end{lem}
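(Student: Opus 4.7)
Plan: I will prove the three equivalences and the gluing identity separately, since they rest on different tools (an explicit fibrewise homeomorphism for the former, Corollary~\ref{cor:collar1} for the latter).

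All three candidate cobordisms in the first part share the underlying manifold $T\tm I$ and differ only in the boundary parametrisations. By convention~\ref{conv:equiv-of-cob} it suffices, for each pair, to exhibit an orientation-preserving self-homeomorphism $\Phi:T\tm I\to T\tm I$ that intertwines the corresponding source and target embeddings. The natural candidates are the fibrewise maps $\Phi_C(x,t)=(Cx,t)$ for $C\in\SL$, each of which is orientation-preserving since $C$ acts by an orientation-preserving homeomorphism of $T$ and the identity is used on $I$. A direct check shows that $\Phi_{A}$ intertwines the first and second cobordisms, while $\Phi_{B^{-1}}$ intertwines the second and third. Conceptually, one may note that for any cobordism of the form $(g:T\hra T\tm I\hla T:f)$ with $f,g\in\SL$ the element $g^{-1}f\in\SL$ is preserved under the replacement $(f,g)\mapsto (Cf,Cg)$; all three cobordisms in the statement have this invariant equal to $B^{-1}A$, and so they are mutually equivalent (and coincide with $\Cyl_{B^{-1}A}$).

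For the gluing identity $\Cyl_{BA}=\Cyl_B\circ\Cyl_A$, I apply Corollary~\ref{cor:collar1} with $\Cyl_A$ playing the role of the ``collar tube'' $(\Sigma\xra{g_1}\Sigma\tm I\xla{f}\Sigma)$. Writing $\Cyl_B=(T\xra{E} T\tm I\xla{B} T)$ and $\Cyl_A=(T\xra{E} T\tm I\xla{A} T)$ and reading off $h=E$, $g_2=B$, $g_1=E$, $f=A$, the corollary absorbs the second copy of $T\tm I$ into the source parametrisation and yields
\[
	\Cyl_B\circ\Cyl_A \;=\; \bigl(T\xra{E}\, T\tm I \,\xla{B\cdot E^{-1}\cdot A}\, T\bigr) \;=\; \Cyl_{BA}.
\]

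The only delicate points are orientation bookkeeping: the $\Phi_C$ must be orientation-preserving and Corollary~\ref{cor:collar1} requires the gluing embedding to be orientation-reversing onto the relevant boundary component. Both are automatic here, since $C,A,B\in\SL$ and, by convention~\ref{conv:boundary}, the two boundary components of $T\tm I$ carry opposite induced orientations. I do not foresee a substantive obstacle; the whole lemma is essentially a clean translation of the collar absorption machinery into matrix language.
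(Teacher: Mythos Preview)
Your proof is correct. The gluing identity $\Cyl_{BA}=\Cyl_B\circ\Cyl_A$ is handled exactly as the paper intends, via Corollary~\ref{cor:collar1}.

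For the three equivalences you take a slightly different route from the paper. The paper derives \emph{all} of the lemma from Corollaries~\ref{cor:collar1} and~\ref{cor:collar2}: one composes $(B:T\hra T\tm I\hla T:A)$ on the right with the identity $\mj$ and applies Corollary~\ref{cor:collar2} (with $f=B$, $g_1=A$, $g_2=h=E$) to obtain the first form $(A^{-1}B:T\hra T\tm I\hla T:E)$, and symmetrically composes on the left with $\mj$ and applies Corollary~\ref{cor:collar1} to obtain the third form. You instead exhibit the equivalences directly by the fibrewise homeomorphisms $\Phi_C(x,t)=(Cx,t)$ for $C=A$ and $C=B^{-1}$. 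Your argument is more elementary (it bypasses the g-Collar Lemma entirely and needs nothing beyond convention~\ref{conv:equiv-of-cob}), and your remark that $g^{-1}f$ is the invariant governing the equivalence class is a clean way to see all three at once. The paper's approach has the virtue of uniformity: both halves of the lemma come out of the same collar-absorption machinery, so no separate bookkeeping is needed.
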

\begin{proof}
This is an easy consequence of Corollaries~\ref{cor:collar1} and \ref{cor:collar2}.
\end{proof}
%\begin{proof}
%	The following commutative diagram gives desired homeomorphisms:
%	\[\begin{tikzcd}
%		 T\rar[hook]{A^{-1} B}\dar[equals] & T\tm I\dar{A\tm id_I} & T\lar[hook'][swap]{E}\dar[equals]\\
%		 T\rar[hook]{B} & T\tm I & T\lar[hook'][swap]{A}\\
%		 T\rar[hook]{E}\uar[equals] & T\tm I\uar{B\tm id_I} & T\lar[hook'][swap]{B^{-1} A}\uar[equals]
%	\end{tikzcd}
%	\]
%	Using this we can write $\Cyl_B=(B^{-1}: T\hra  T\tm I\hla  T:E)$, and easily compose:
%	\[
%		\Cyl_B\circ\Cyl_A =
%		( T\xra{B^{-1}}  T\tm I\xla{E}  T)\circ (T\xra{E}  T\tm I\xla{A}  T)=(T\xra{B^{-1}} T\tm I\xla{A}  T).
%	\]
%	This is equal to $(E : T\hra  T\tm I\hla  T:BA)=\Cyl_{BA}$ as desired.
%\end{proof}

By definition~\ref{conv:equiv-of-cob}, cobordisms $\Cyl_A$ and $\Cyl_B$ are equivalent if and only if there is an orientation preserving homeomorphism $\Phi$ of the underlying $3$-manifold, which commutes with boundary inclusions. That is, if and only if there is a commutative diagram
\[
\begin{tikzcd}
		 T\rar[hook]{E}\dar[equals] &  T\tm I\dar{\Phi} &  T\lar[hook'][swap]{A}\dar[equals]\\
		 T\rar[hook]{E} & T\tm I &  T\lar[hook'][swap]{B}
	\end{tikzcd}
\]
This means that $\Phi|_{T\tm\{0\}}\circ A=B$ and $\Phi|_{T\tm\{1\}}=E$, so $\Phi$ is precisely a pseudo-isotopy from $\Phi|_{T\tm\{0\}}=BA^{-1}$ to $E$. We then use the following classical result.

\begin{thm}[{\cite[Theorem~1.9]{M65}}, {\cite[Theorem~6.3]{E66}}]\label{thm:htpic-iff-istpic}
	Two surface homeomorphisms are homotopic if and only if they are pseudo-isotopic if and only if they are isotopic.
\end{thm}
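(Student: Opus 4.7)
The statement packages three implications, so I would first separate out the two routine directions and then concentrate on the hard one. \emph{Isotopic implies pseudo-isotopic} is immediate: an isotopy $F:\Sigma\tm I\to\Sigma$ with $F_0=f$, $F_1=g$ gives the level-preserving homeomorphism $(x,t)\mapsto(F(x,t),t)$ of $\Sigma\tm I$. \emph{Pseudo-isotopic implies homotopic} is equally cheap: if $\Phi:\Sigma\tm I\to\Sigma\tm I$ is a pseudo-isotopy, then $H(x,t)\coloneqq \pi_\Sigma\circ\Phi(x,t)$ is a homotopy from $f=\pi_\Sigma\Phi_0$ to $g=\pi_\Sigma\Phi_1$. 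So the content lies entirely in proving \emph{homotopic implies isotopic} for homeomorphisms of a compact surface (which by taking $g^{-1}\circ f$ reduces to: any surface self-homeomorphism homotopic to the identity is isotopic to it), and then upgrading this isotopy to a pseudo-isotopy (automatic once we have it).

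The plan for the hard direction follows Epstein's scheme. First I would dispatch the handful of surfaces where the argument is special: the sphere $S^2$ (where $\Aut=\Z/2$ is easy via Alexander's trick on each hemisphere), the disk $D^2$ (Alexander's trick), the annulus and Möbius band (direct), and $\RR P^2$ (lift to $S^2$). I would also handle the torus $T$ separately: homotopic homeomorphisms induce the same map on $H_1(T;\Z)$, so by Proposition~\ref{prop:MCG} they represent the same element of $\MCG(T)\cong\SL$, and one then verifies that representatives can be joined by an affine isotopy through $GL(2,\RR)$-type maps. For closed orientable $\Sigma$ of genus $\geq 2$, the strategy is to pick a maximal system of disjoint essential simple closed curves $\{c_1,\dots,c_{3g-3}\}$ cutting $\Sigma$ into pairs of pants, and prove inductively that if $f\simeq id$ then $f$ is isotopic to a homeomorphism fixing each $c_i$ setwise; once $f$ fixes the pants decomposition, each pair of pants is a planar surface where the Alexander trick plus an annulus-twist analysis finishes the job. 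The non-orientable case is reduced to the orientable via the orientation double cover together with equivariance of the lifted homotopy.

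The main obstacle is the inductive step: showing that a homeomorphism $f\simeq id$ can be isotoped to preserve a chosen essential simple closed curve $c$. The key tool is that if two essential simple closed curves $c,c'$ on $\Sigma$ are homotopic (as they must be, since $f(c)\simeq c$), then after a small perturbation into transverse position their geometric intersection number is zero, so by the bigon criterion they cobound an embedded annulus, through which $c$ can be isotoped to $c'=f(c)$. This requires the classical innermost-disk/bigon machinery and a careful argument that the resulting ambient isotopy of $\Sigma$ extends coherently; this is exactly the technical core of~\cite{E66}. Once $f$ can be assumed to fix $c$, cutting along $c$ reduces the genus (or the number of boundary components), and induction applies. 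Finally, any isotopy $F_t$ obtained this way with $F_0=id$, $F_1=g f^{-1}$ upgrades to a pseudo-isotopy via $(x,t)\mapsto (F_t(x),t)$, closing the loop between all three notions.
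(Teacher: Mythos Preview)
The paper does not prove this theorem; it is quoted verbatim as a classical result with references to Milnor and Epstein and then used as a black box (to conclude that $\Cyl_A=\Cyl_B$ implies $A=B$). So there is no ``paper's own proof'' to compare against.

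Your outline is a faithful high-level sketch of Epstein's argument, and the two easy implications are handled correctly. Two places would need tightening in a complete write-up. First, the torus paragraph is a bit glib: an arbitrary homeomorphism homotopic to the identity is not itself affine, so ``an affine isotopy through $GL(2,\RR)$-type maps'' does not literally apply; what one actually shows is that the identity component of $\mathrm{Homeo}^+(T)$ is path-connected (e.g.\ via the evaluation fibration $\mathrm{Homeo}_0(T)\to T$, whose fibre is $\mathrm{Homeo}_0(T,\ast)$, and then lifting to $\RR^2$). Second, in the inductive step, once $f$ has been isotoped to fix $c$ setwise you must still argue that (a) $f$ preserves the two sides of $c$ and can be isotoped to fix $c$ pointwise, and (b) the restriction of $f$ to $\Sigma\setminus c$ is homotopic to the identity \emph{rel boundary}, i.e.\ that no hidden Dehn twist along $c$ has been introduced; this is exactly where Epstein's careful bigon/annulus analysis does real work and cannot be waved through.
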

Therefore, $\Cyl_A=\Cyl_B$ if and only if $A=B$. In fact, we have the following.

\begin{cor}
\label{cor:Cyl}
	For the group $\Aut(T)$ of automorphisms of the object $T$ in $\Cob$, the map $\Cyl:\SL\to\Aut(T)$, $A\mapsto\Cyl_A$, is an isomorphism of groups.
\end{cor}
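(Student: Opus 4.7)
The strategy is three-fold: verify that $\Cyl$ is a group homomorphism, is injective, and is surjective onto $\Aut(T)$. The first point follows immediately from Lemma~\ref{lem:cylinders}: the identity $\Cyl_{BA}=\Cyl_B\circ\Cyl_A$ together with $\Cyl_E=\mj$ shows both that $\Cyl$ lands in $\Aut(T)$ (with $\Cyl_{A^{-1}}$ serving as a two-sided inverse) and that it is a homomorphism. Injectivity is already established in the paragraph preceding the corollary: an equivalence $\Cyl_A=\Cyl_B$ of cobordisms unwinds via convention~\ref{conv:equiv-of-cob} to a pseudo-isotopy from $BA^{-1}$ to $E$, and by Theorem~\ref{thm:htpic-iff-istpic} pseudo-isotopies of $T$ coincide with isotopies, so $A=B$ in $\SL\cong\MCG(T)$.

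For surjectivity, let $M\colon T\to T$ be an arbitrary automorphism in $\Cob$, with categorical inverse $N$. By convention~\ref{conv:composition}, the equivalence $M\circ N=\mj$ yields an orientation-preserving homeomorphism $\Phi\colon M\cup_\varphi N\to T\tm I$ preserving boundary parametrizations. Its image of the identified source boundary of $M$ is a tamely embedded torus $\Sigma\subset T\tm I$ separating the two boundary components $T\tm\{0\}$ and $T\tm\{1\}$.

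The key geometric step is to normalise $\Sigma$ to a horizontal fibre up to isotopy. I would invoke the same 3-manifold classification tools used in the proof of Theorem~\ref{thm:torus-bundles} (namely \cite[Lemma~2.7]{Hatcher} and irreducibility of $T\tm I$) to conclude that $\Sigma$ is incompressible and hence isotopic to $T\tm\{1/2\}$. Applying the isotopy extension theorem to absorb this ambient isotopy into $\Phi$, the restriction $\Phi|_M$ exhibits $M$ as a cobordism of the shape $\bigl((g^t,1)\colon T\hra T\tm I\hla T\colon(g^s,0)\bigr)$ for orientation-preserving self-homeomorphisms $g^s,g^t$ of $T$, corresponding via Proposition~\ref{prop:MCG} to matrices $G^s,G^t\in\SL$. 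A final application of Corollary~\ref{cor:collar2} (equivalently Lemma~\ref{lem:cylinders}) rewrites this cobordism as $\Cyl_A$ with $A=(G^t)^{-1}G^s$, placing $M$ in the image of $\Cyl$.

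The principal obstacle is precisely this geometric classification: checking that $\Sigma$ is incompressible requires genuine 3-manifold input, since a priori $\Phi(M)$ and $\Phi(N)$ could sit inside $T\tm I$ in complicated ways, and one must rule out the possibility that a compressing disk for $\Sigma$ survives inside one of the pieces. Once incompressibility is secured, the remaining bookkeeping with boundary parametrizations is purely formal via the Collar lemma.
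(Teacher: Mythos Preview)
Your approach is essentially the paper's: homomorphism via Lemma~\ref{lem:cylinders}, injectivity via the pseudo-isotopy argument and Theorem~\ref{thm:htpic-iff-istpic}, and surjectivity by finding the separating torus $\Sigma\subset T\tm I$ and appealing to \cite[Lemma~2.7]{Hatcher}. The only substantive difference is that you flag the incompressibility of $\Sigma$ as the ``principal obstacle'', whereas the paper dispatches it in one line: each side of $\Sigma$ has an additional torus boundary component, so neither side can be a solid torus (which is what a compressing disk, together with irreducibility of $T\tm I$, would force). With that observation in hand, your isotopy-extension bookkeeping is correct but slightly heavier than needed --- once $\Sigma$ is $\partial$-parallel, one piece is already a collar $\cong T\tm I$, hence so is its complement.
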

% Our proof of this is different than the one given here https://mathoverflow.net/questions/155380/mapping-class-group-vs-automorphism-group-in-cobordism-category
\begin{proof}
	This is a homomorphism by Lemma~\ref{lem:cylinders}, and is injective by Theorem~\ref{thm:htpic-iff-istpic}.
	
	Let us show that any invertible arrow of $T$ to itself is a cylinder. Fix a cobordism $\varphi\in\Aut(T)$ in $\Cob$. Then there exists $\psi\in\Aut(T)$ with $\varphi\circ\psi=\mj$. In particular, the manifold $T\tm I$ supporting $\mj$ contains an embedded torus which divides it into $\varphi$ and $\psi$. This torus cannot be compressible (since on both sides it bounds a manifold that has one more boundary component), so it must be $\partial$-parallel, by the proof of \cite[Lemma~2.7]{Hatcher}, already mentioned in the proof of Theorem~\ref{thm:torus-bundles}. Thus, $\varphi$ or $\psi$ is supported by $T\tm I$, so the other one is as well.
\end{proof}

From Corollaries~\ref{cor:collar1},~\ref{cor:collar2} and~\ref{cor:Cyl}, one easily concludes the following.

\begin{cor}\label{beta-gamma-cyl}
	If $\beta\circ(\Cyl_A\otm id)=\beta\circ(\Cyl_B\otm id)$ or $(\Cyl_A\otm id)\circ\gamma=(\Cyl_B\otm id)\circ\gamma$, then $A=B$.
\end{cor}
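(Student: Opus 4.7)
The plan is to compute both $\b\circ(\Cyl_A\otm\mj)$ and $(\Cyl_A\otm\mj)\circ\g$ explicitly using the Collar Corollaries, observe that each becomes a cobordism supported on a single thick torus $T\tm I$ with controlled boundary parametrisations, and then extract $A$ by the same pseudo-isotopy/Theorem~\ref{thm:htpic-iff-istpic} argument that underlies Corollary~\ref{cor:Cyl}. The key enabling observation is that $\Cyl_A\otm\mj$ is genuinely of the shape $\Sigma\tm I$ for the (possibly disconnected) surface $\Sigma=T\sqcup T$, so the Collar Corollaries apply verbatim with $\Sigma$ disconnected, as permitted by Lemma~\ref{lem:collar}.

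For the first claim I apply Corollary~\ref{cor:collar1} with $g_1=E\sqcup E$ (target of $\Cyl_A\otm\mj$), $f=A\sqcup E$ (source of $\Cyl_A\otm\mj$), and $g_2=E\sqcup J$ (source of $\b$). A direct substitution gives
\[
\b\circ(\Cyl_A\otm\mj)=\bigl(\emptyset\hra T\tm I\hla T\sqcup T : A\sqcup J\bigr),
\]
with the analogous formula for $B$. The hypothesis then provides, by convention~\ref{conv:equiv-of-cob}, an orientation preserving homeomorphism $\Phi$ of $T\tm I$ with $\Phi\circ(A\sqcup J)=B\sqcup J$; componentwise this forces $\Phi|_{T\tm\{0\}}=BA^{-1}$ and $\Phi|_{T\tm\{1\}}=E$. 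Such a $\Phi$ is a pseudo-isotopy from $BA^{-1}$ to $E$, so by Theorem~\ref{thm:htpic-iff-istpic} (the engine of Corollary~\ref{cor:Cyl}) together with Proposition~\ref{prop:MCG} we conclude $BA^{-1}=E$, i.e.\ $A=B$.

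The second claim follows by a symmetric argument using Corollary~\ref{cor:collar2}: one computes
\[
(\Cyl_A\otm\mj)\circ\g=\bigl(T\sqcup T \xra{\,JA^{-1}\sqcup E\,} T\tm I \hla \emptyset\bigr),
\]
and equality with the $B$-analogue gives an orientation preserving $\Phi$ of $T\tm I$ restricting to $E$ on $T\tm\{1\}$ and to $JB^{-1}AJ^{-1}$ on $T\tm\{0\}$, forcing $JB^{-1}AJ^{-1}=E$ by Theorem~\ref{thm:htpic-iff-istpic}, hence $A=B$. No substantive obstacle arises; the only point requiring attention is orientation bookkeeping in the second case, which is automatic since $J^{\pm1}$ occurs twice and cancels in determinant.
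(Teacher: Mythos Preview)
Your proof is correct and follows essentially the same route the paper indicates: reduce both sides to a single thick torus via Corollaries~\ref{cor:collar1} and~\ref{cor:collar2}, and then invoke the pseudo-isotopy argument behind Corollary~\ref{cor:Cyl} (you unpack that corollary into Theorem~\ref{thm:htpic-iff-istpic}, which is exactly its content). One small remark: your formula $(\Cyl_A\otm\mj)\circ\g=(JA^{-1}\sqcup E:\ldots)$ is the right output of Corollary~\ref{cor:collar2}; it differs superficially from the expression one would read off Lemma~\ref{lem:beta-CylA-CylB} with $B=E$, but this is immaterial to the conclusion, since in either form the restriction $\Phi|_{T\times\{1\}}=E$ forces $\Phi|_{T\times\{0\}}=E$ and hence $A=B$.
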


%%%%%%%%%%%
\subsection{Interactions of cylinders and (co)pairings}
We will often use the following equalities; recall from \ref{conv:otm} the notation $\otm$ of concatenation of cobordisms, which is just the disjoint union.
\begin{lem}\label{lem:beta-CylA-CylB}\label{lem:J-sigma}
	We have 
	\begin{align*}
		\b\circ (\Cyl_A\otm\Cyl_B) &= \big( \emptyset\hra T\tm I\hla  T\sqcup T :A\sqcup JB \big)\\
		&=\big( \emptyset\hra T\tm I\hla  T\sqcup T : (B\sqcup JA)\circ \swap \big)\\
		(\Cyl_A\otm\Cyl_B)\circ\g &= \big( JA\sqcup B: T\sqcup T \hra T\tm I\hla\emptyset \big)\\
		&=\big( (JB\sqcup A)\circ \swap : T\sqcup T \hra T\tm I\hla \emptyset \big).
	\end{align*}
\end{lem}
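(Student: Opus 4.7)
The plan is to handle the four stated equalities in two matched pairs. Each pair consists of one ``computational'' identity—obtained by collapsing a composition into a single copy of $T\tm I$ via Corollary~\ref{cor:collar1} or~\ref{cor:collar2}—together with one ``swap'' identity—obtained by exhibiting an orientation-preserving self-homeomorphism of $T\tm I$ that intertwines the two boundary embeddings.

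For the first equality I view $\Cyl_A\otm\Cyl_B$ as a product cylinder over $T\sqcup T$, supported by $(T\sqcup T)\tm I=(T\tm I)\sqcup(T\tm I)$, with target embedding $E\sqcup E$ and source embedding $A\sqcup B$. Applying Corollary~\ref{cor:collar1} with $g_2=E\sqcup J$ (the source of $\b$), $g_1=E\sqcup E$, and $f=A\sqcup B$ absorbs the product cylinder into $\b$, giving the new source embedding
\[
	g_2\circ g_1^{-1}\circ f \;=\; (E\sqcup J)\circ(A\sqcup B) \;=\; A\sqcup JB,
\]
which is the first displayed cobordism. For the third equality I proceed dually: Corollary~\ref{cor:collar2} absorbs the same product cylinder into $\g$ with the target of $\g$ now playing the role of $g_2 = J\sqcup E$, producing the stated target embedding.

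For the second and fourth equalities I would exhibit a single orientation-preserving homeomorphism $\Phi: T\tm I \to T\tm I$, $\Phi(y,t) = (Jy, 1-t)$, and verify that it intertwines both pairs of embeddings. Orientation preservation is immediate, since $J$ on $T$ and $t\mapsto 1-t$ on $I$ are each orientation reversing, so their product preserves orientation on $T\tm I$. The componentwise checks are short: $\Phi(A(x),0)=(JA(x),1)$ and $\Phi(JB(x),1)=(B(x),0)$, matching $(B\sqcup JA)\circ\swap$; and $\Phi(JA(x),0)=(A(x),1)$ and $\Phi(B(x),1)=(JB(x),0)$, matching $(JB\sqcup A)\circ\swap$. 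Together with the fact that $\Phi$ keeps the other (empty) ``boundary'' components fixed, the diagram in convention~\ref{conv:equiv-of-cob} commutes, establishing both equivalences.

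The main obstacle will be bookkeeping of boundary data: one must carefully track which end of $T\tm I$ (the $t=0$ or $t=1$ side) each component of a disjoint union lands on, so as to apply the corollaries correctly and assign to each surface homeomorphism its correct orientation character. Once that is settled, both techniques are mechanical, and it is pleasing that the very same $\Phi$ witnesses both swap identities—reflecting the fact that the only asymmetry between the two ends of $T\tm I$ is absorbed by the single appearance of $J$ in the definitions of $\b$ and $\g$.
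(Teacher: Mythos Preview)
Your proposal is correct and follows the paper's proof essentially verbatim: you apply Corollary~\ref{cor:collar1} with the same data $g_2=E\sqcup J$, $g_1=E\sqcup E$, $f=A\sqcup B$ for the first equality, and exhibit the very same homeomorphism $\Phi(y,t)=(Jy,1-t)$ for the second; the paper's ``analogously'' for the last two equalities is precisely your dual use of Corollary~\ref{cor:collar2} together with the same $\Phi$.
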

\begin{proof}
Using definitions we find
	\begin{align*}
		&\b\circ (\Cyl_A\otm\Cyl_B)\\
		&\coloneqq \big( \emptyset\hra T\tm I\xla{E\sqcup J}  T\sqcup T \big)\circ \big((T\xra{E} T\tm I\xla{A} T)\otm (T\xra{E} T\tm I\xla{B} T)\big)\\
		&=\big( \emptyset\hra T\tm I\xla{E\sqcup J}  T\sqcup T \big)\circ \big(T\sqcup T\xra{E\sqcup E } (T\sqcup T)\tm I \xla{A\sqcup B} T\sqcup T\big).
	\end{align*}
	By Corollary~\ref{cor:collar1} for $M=T\tm I$, $\Sigma=T\sqcup T$, $g_2=E\sqcup J$, and $g_1=E\sqcup E$, $f=A\sqcup B$, this cobordism is equivalent to to $\emptyset\hra  T\tm I \hla T\sqcup T :g_2\circ g_1^{-1}\circ f= A\sqcup JB$.
	
	To prove the second equality for $\beta$, we define
\[
	\begin{tikzcd}[column sep=large]
		 \emptyset\rar[hook]\dar[equals] &  T\tm I\dar{\Phi} &  T\sqcup T\lar[hook'][swap]{A\sqcup JB}\dar[equals]\\
		 \emptyset\rar[hook] & T\tm I &  T\sqcup T\lar[hook'][swap]{(B\sqcup JA)\circ \swap}
	\end{tikzcd}
\]	
	by letting $\Phi(x,t)=(Jx,1-t)$. This is an orientation preserving homeomorphism (as it reverses the orientations both of $T$ and $I$). The diagram commutes since $\Phi(A\sqcup JB)(x,0)=\Phi(Ax,0)=(JAx,1)$ and $\Phi(A\sqcup JB)(x,1)=\Phi(JBx,1)=(Bx,0)$. By notation~\ref{conv:swap} this agrees with $(B\sqcup JA)\circ \swap$, as desired.
	
	The last two equalities are proven analogously.
\end{proof}

\begin{lem}
\label{lem:cob-supp-by-TxI}
	Any cobordism in $\Cob$ supported by $T\tm I$ is a composite of $\b,\g,\Cyl_A$.
\end{lem}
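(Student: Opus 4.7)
The plan is to perform a case analysis based on how the two boundary tori of $T\tm I$ are distributed between the source and the target. Let $C=(f^t:\Sigma^t\hra T\tm I\hla\Sigma^s:f^s)$ be such a cobordism. Since $\partial(T\tm I)=T\tm\{0\}\sqcup T\tm\{1\}$ and $f^s\sqcup f^t$ is a homeomorphism onto this boundary, the surface $\Sigma^s\sqcup\Sigma^t$ consists of exactly two tori. Hence exactly one of the following holds: $\Sigma^s=T=\Sigma^t$, or $\Sigma^s=T\sqcup T$ with $\Sigma^t=\emptyset$, or the reverse.

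In the case $\Sigma^s=T=\Sigma^t$, the maps $f^s$ and $f^t$ each hit one boundary component of $T\tm I$. Up to replacing the supporting manifold by itself through the orientation-preserving self-homeomorphism $\Phi(x,t)=(Jx,1-t)$ used in the proof of Lemma~\ref{lem:beta-CylA-CylB}, I may assume that $f^s$ lands in $T\tm\{0\}$ and $f^t$ in $T\tm\{1\}$. By the orientation conventions~\ref{conv:boundary}, this forces $f^s=(A,0)$ and $f^t=(B,1)$ for unique $A,B\in\SL$, and Lemma~\ref{lem:cylinders} then identifies $C$ with $\Cyl_{B^{-1}A}$.

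Next consider the case $\Sigma^s=T\sqcup T$ and $\Sigma^t=\emptyset$, so that $f^s$ is an orientation-reversing homeomorphism $T\sqcup T\to T\tm\{0\}\sqcup T\tm\{1\}$. I split into two subcases by whether the first component of $T\sqcup T$ lands in $T\tm\{0\}$ or in $T\tm\{1\}$. Orientation analysis writes $f^s$ as either $A\sqcup JB$ or $(B\sqcup JA)\circ\swap$ for some $A,B\in\SL$, and the two equivalent descriptions of $\b\circ(\Cyl_A\otm\Cyl_B)$ supplied by Lemma~\ref{lem:beta-CylA-CylB} cover precisely both subcases. The case $\Sigma^t=T\sqcup T$ and $\Sigma^s=\emptyset$ is handled symmetrically, yielding $C=(\Cyl_A\otm\Cyl_B)\circ\g$.

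The main conceptual point (and the only place where something could go wrong) is that the two orderings of the components of $T\sqcup T$ at the free end would a priori demand inserting the symmetry $\tau$, which is \emph{not} supported by a single copy of $T\tm I$. What saves us is exactly the equality of the two descriptions of $\b\circ(\Cyl_A\otm\Cyl_B)$ (and dually of $(\Cyl_A\otm\Cyl_B)\circ\g$) established in Lemma~\ref{lem:beta-CylA-CylB}: the ``swap'' is absorbed into the freedom of choosing $A$ and $B$, so $\b$, $\g$, and the cylinders $\Cyl_A$ already suffice.
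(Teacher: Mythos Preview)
Your proof is correct and follows essentially the same approach as the paper's: a case split on how the two boundary tori are distributed between source and target, reducing the $T\to T$ case to $\Cyl_{B^{-1}A}$ via Lemma~\ref{lem:cylinders} and the $T\sqcup T$ cases to $\b\circ(\Cyl_A\otm\Cyl_B)$ (resp.\ $(\Cyl_A\otm\Cyl_B)\circ\g$) via Lemma~\ref{lem:beta-CylA-CylB}. Your treatment is in fact a bit more careful than the paper's terse version---you explicitly normalize the cylinder case using $\Phi(x,t)=(Jx,1-t)$ and you point out why the two possible orderings of the components of $T\sqcup T$ do not require inserting $\tau$, which the paper leaves implicit.
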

\begin{proof}
	By Lemma~\ref{lem:cylinders} any cobordism $(B : T\hra  T\tm I\hla  T:A)$ is equivalent to $\Cyl_{B^{-1}A}$. Moreover, for $A\in\SL$ and $A'\in\GL\setminus\SL$ by Lemma~\ref{lem:beta-CylA-CylB} any $(\emptyset \hra  T\tm I\hla T\sqcup T:A\sqcup A')$ is equal to the composite $\b\circ (\Cyl_A\otm\Cyl_{JA'})$, and any $(A'\sqcup A :T\sqcup T\hra  T\tm I\hla \emptyset)$ can be obtained as $(\Cyl_{JA'}\otm\Cyl_A)\circ\g$. This covers all possibilities for cobordisms supported by $T\tm I$.
\end{proof}

Next, we use Lemma~\ref{lem:J-sigma} to show compatibility of (co)pairing with the symmetry.
\begin{lem}
\label{lem:tau}
	We have $\b\circ\tau=\b$ and $\tau\circ\g=\g$.
\end{lem}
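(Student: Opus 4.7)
The plan is to use the Collar corollaries to absorb the product cobordism $\tau$ (supported on $(T\sqcup T)\tm I$) into the single copy of $T\tm I$ underlying $\b$ or $\g$, thereby reducing each composition to a cobordism supported on one $T\tm I$. The result is then immediately recognized as $\b$ (respectively $\g$) by comparison with the two equivalent presentations provided by Lemma~\ref{lem:beta-CylA-CylB} in the special case $A=B=E$, where the cylinder factors reduce to the identity $\Cyl_E = \mj$.

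Concretely, for the first identity I apply Corollary~\ref{cor:collar1} with $\Sigma = T\sqcup T$, taking the product cobordism to be $\tau$ (so $g_1 = \swap$, $f = id$) and the outer cobordism to be $\b$ (so $g_2 = E\sqcup J$). Using that $\swap$ is an involution, this collapses $\b\circ\tau$ to the single cobordism $(\emptyset \hra T\tm I \hla T\sqcup T : (E\sqcup J)\circ\swap)$. Setting $A=B=E$ in Lemma~\ref{lem:beta-CylA-CylB} shows precisely that $(\emptyset \hra T\tm I \hla T\sqcup T : E\sqcup J)$ and $(\emptyset \hra T\tm I \hla T\sqcup T : (E\sqcup J)\circ\swap)$ define the same cobordism, namely $\b$, yielding $\b\circ\tau = \b$.

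The second identity is dual: Corollary~\ref{cor:collar2} applied with $f = \swap$, $g_1 = id$, $g_2 = J\sqcup E$ collapses the composition to $\tau\circ\g = (T\sqcup T \xra{(J\sqcup E)\circ\swap} T\tm I \hla \emptyset)$, and the second half of Lemma~\ref{lem:beta-CylA-CylB} (again with $A=B=E$) identifies this with $\g$.

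A hands-on proof, bypassing Lemma~\ref{lem:beta-CylA-CylB}, would exhibit the explicit orientation-preserving homeomorphism $\Phi : T\tm I \to T\tm I$, $\Phi(x,t) = (Jx, 1-t)$, which preserves orientation because $J$ and $t \mapsto 1-t$ each reverse orientation, and one verifies directly that $\Phi$ intertwines $(E\sqcup J)\circ\swap$ with $E\sqcup J$ for the first identity, and $(J\sqcup E)\circ\swap$ with $J\sqcup E$ for the second. The only real obstacle is notational bookkeeping: tracking the direction of composition together with the orientation conventions of \ref{conv:boundary}. The geometric content is just that $\b$ and $\g$ are symmetric in their two torus components up to the $J$-twist built into their definitions to compensate for the orientation reversal of one boundary component of $T\tm I$.
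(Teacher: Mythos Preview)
Your proof is correct and follows essentially the same route as the paper's own argument: the paper also applies Corollary~\ref{cor:collar1} to collapse $\b\circ\tau$ to $(\emptyset\hra T\tm I\hla T\sqcup T:(E\sqcup J)\circ\swap)$ and then invokes Lemma~\ref{lem:J-sigma} (the same lemma you cite as Lemma~\ref{lem:beta-CylA-CylB}) with $A=B=E$ to identify this with $\b$. Your added remark about the explicit homeomorphism $\Phi(x,t)=(Jx,1-t)$ is exactly the map used inside the proof of that lemma, so the ``hands-on'' alternative is not really a different argument but an unpacking of the cited one.
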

\begin{proof}
	We prove the first equality, and the proof of the second is similar. By the definition~\eqref{eq:beta} of $\beta$ and~\eqref{eq:tau} of $\tau$, we have
	\[
		\b\circ\tau= \big( \emptyset\hra T\tm I\xla{E\sqcup J} T\sqcup T\big)\circ \big(T\sqcup T \xra{(\swap,1)} (T\sqcup T)\tm I\xla{(id,0)} T\sqcup T \big).
	\]
	By Corollary~\ref{cor:collar1} and using $\swap^{-1}=\swap$ this cobordism is equivalent to
	\[
		\emptyset\hra  T\tm I \hla T\sqcup T : (E\sqcup J)\circ \swap^{-1}\circ id= (E\sqcup J)\circ \swap,
	\]
	which is in turn equal to $\b$ by applying Lemma~\ref{lem:J-sigma} to $A=B=E$.
\end{proof}

\begin{lem}[The $\b$-jump Identity]
\label{lem:beta-jump}
	We have
	\[
		\b\circ (\Cyl_{JB^{-1}JA}\otm\mj)
		=\b\circ (\Cyl_A\otm\Cyl_B)
		=\b\circ (\mj\otm\Cyl_{JA^{-1}JB}).
	\]
\end{lem}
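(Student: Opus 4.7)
The plan is to first apply Lemma~\ref{lem:beta-CylA-CylB} to each of the three cobordisms, rewriting them in the canonical form $(\emptyset\hra T\tm I\hla T\sqcup T : f_i)$. Using $\mj=\Cyl_E$ and the identity $J^2=E$ from~\eqref{eq:J}, the three gluing maps are respectively
\[
	f_1 \coloneqq JB^{-1}JA\sqcup J, \qquad f_2 \coloneqq A\sqcup JB, \qquad f_3 \coloneqq E\sqcup A^{-1}JB.
\]
The two claimed equalities then reduce, via convention~\ref{conv:equiv-of-cob}, to producing orientation-preserving self-homeomorphisms of $T\tm I$ that carry $f_2$ onto $f_1$ and $f_3$ respectively, while acting trivially on the (empty) other boundary of the cobordism.

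For the equality $\b\circ(\Cyl_A\otm\Cyl_B) = \b\circ(\mj\otm\Cyl_{JA^{-1}JB})$ I would take $\Phi(x,t) \coloneqq (A^{-1}x,t)$. Since $A^{-1}\in\SL$ acts orientation-preservingly on $T$, the map $\Phi$ is an orientation-preserving homeomorphism of $T\tm I$, and a one-line verification yields $\Phi\circ f_2 = f_3$. For the equality $\b\circ(\Cyl_{JB^{-1}JA}\otm\mj) = \b\circ(\Cyl_A\otm\Cyl_B)$ I would take $\Psi(x,t) \coloneqq (JB^{-1}Jx,t)$, and use $J^2=E$ to compute $\Psi(JBx,1) = (JB^{-1}J\cdot JBx, 1) = (Jx, 1)$, while $\Psi(Ax,0) = (JB^{-1}JAx,0)$, yielding $\Psi\circ f_2 = f_1$.

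The only slightly delicate point---really the single fact one has to check---is that the conjugate $JB^{-1}J$ lands in $\SL$ rather than in $\GL\setminus\SL$; indeed its determinant is $(-1)\cdot 1\cdot(-1)=1$, so $\Psi$ is orientation-preserving. Without this, $\Psi$ would reverse orientation and fail to define a valid equivalence of oriented cobordisms. Everything else is a direct unwinding of the embeddings together with the usual cancellation $A\cdot A^{-1}=E=J^2$.
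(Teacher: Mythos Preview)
Your proof is correct and follows essentially the same strategy as the paper: reduce each side via Lemma~\ref{lem:beta-CylA-CylB} to the canonical form $(\emptyset\hra T\tm I\hla T\sqcup T:f_i)$ and then exhibit an explicit orientation-preserving homeomorphism $C\times id_I$ of $T\times I$, with $C\in\SL$, intertwining the boundary parametrisations. For the first equality you use the very same $\Psi=JB^{-1}J\times id_I$ as the paper. For the second equality there is only a cosmetic difference: you apply $\Phi=A^{-1}\times id_I$ directly to $f_2=A\sqcup JB$, whereas the paper first passes to the swapped description $(B\sqcup JA)\circ\swap$ from Lemma~\ref{lem:J-sigma} and then applies $JA^{-1}J\times id_I$; your route is slightly more direct but otherwise identical in spirit.
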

\begin{proof}
	For the first equality the desired homeomorphism is given by
	\begin{equation}\label{eq:beta-homeo}
	\begin{tikzcd}[column sep=large]
		 \emptyset\rar[hook]\dar[equals]
		 & T\tm I\dar[swap]{JB^{-1}J\tm id_I}
		 & T\sqcup T\lar[hook']{A\sqcup JB}\dar[equals]\\
		 \emptyset\rar[hook]
		 & T\tm I
		 & T\sqcup T\lar[hook']{JB^{-1}JA\sqcup J}
	\end{tikzcd}
	\end{equation}
	Indeed, by Lemma~\ref{lem:beta-CylA-CylB} the top row describes the cobordism $\b\circ (\Cyl_A\otm\Cyl_B)$,
	whereas the bottom row describes $\b\circ (\Cyl_{JB^{-1}JA}\otm\mj)$.
		
	For the other identity we can use Lemma~\ref{lem:J-sigma} to first write
	\[
		\b\circ (\Cyl_A\otm\Cyl_B) =(\emptyset\hra T\tm I\hla  T\sqcup T :B\sqcup JA),
	\] 
	and then use the homeomorphism $JA^{-1}J\tm id_I$.
\end{proof}

%%%%%%%%%%%
\subsection{Torus bundles as cobordisms}

Recall from Section~\ref{subsec:torus-bundles} the notation $Bun_A$ for the torus bundle over $S^1$ with monodromy $A\in\SL$. This is an oriented closed 3-manifold $Bun_A$ which we view as a cobordism from the empty set to itself.
\begin{prop}
\label{prop:Bun}
	The cobordism $\b\circ (\Cyl_A\otm\mj)\circ\g$ is equivalent to $Bun_A$.
\end{prop}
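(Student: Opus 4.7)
The plan is to rewrite $\b \circ (\Cyl_A \otm \mj) \circ \g$ as a gluing of two copies of $T\tm I$ in a form that matches the presentation of $Bun_A$ given by Lemma~\ref{lem:BunA-as-gluing}.

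First, by Lemma~\ref{lem:beta-CylA-CylB} applied with $B = E$, we rewrite the left factor as
\[
	\b \circ (\Cyl_A \otm \mj) = \big( \emptyset \hra T\tm I \hla T\sqcup T : A\sqcup J \big).
\]
Using the second formula of the same lemma (with $A=B=E$), the copairing admits the alternative ``swap'' form
\[
	\g = \big( (J\sqcup E)\circ \swap : T\sqcup T \hra T\tm I \hla \emptyset \big).
\]

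Composing these via the rule in convention~\ref{conv:composition}, the resulting closed $3$-manifold is $(T\tm I)\cup_\varphi (T\tm I)$ with gluing map
\[
	\varphi = (A\sqcup J)\circ \big((J\sqcup E)\circ \swap\big)^{-1} = (A\sqcup J)\circ \swap\circ (J\sqcup E)^{-1}.
\]
Unpacking conventions~\ref{conv:disj-incl} and~\ref{conv:swap} together with $J^{-1}=J$, a direct calculation yields $\varphi(x,0)=(x,1)$ and $\varphi(x,1)=(A(x),0)$, and matching this against convention~\ref{conv:swap} identifies $\varphi = (A\sqcup E)\circ \swap$.

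Applying Lemma~\ref{lem:BunA-as-gluing} then immediately identifies the composite cobordism with $Bun_A$. The main technical care lies in disambiguating the several roles of the symbol $f\sqcup g$ (as a self-map of $T\sqcup T$ versus as an embedding into $T\tm I$), inverting $(J\sqcup E)$ on its image in $\partial(T\tm I)$, and verifying the orientation-reversing behavior of $\varphi$ at each step so that the resulting gluing yields a well-oriented closed manifold.
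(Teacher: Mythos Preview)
Your proof is correct and follows essentially the same route as the paper: both rewrite $\b\circ(\Cyl_A\otm\mj)$ and $\g$ via Lemma~\ref{lem:beta-CylA-CylB}, compute the resulting gluing map to be $(A\sqcup E)\circ\swap$, and conclude by Lemma~\ref{lem:BunA-as-gluing}. The only cosmetic difference is that the paper simplifies the gluing map algebraically (using $\swap\circ(J\sqcup E)=(E\sqcup J)\circ\swap$) while you verify it pointwise.
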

\begin{proof}
	By Lemma~\ref{lem:beta-CylA-CylB} we have $\b\circ (\Cyl_A\otm\mj)=(\emptyset\hra T\tm I\hla  T\sqcup T :A\sqcup J)$ and also $\g=((J\sqcup E)\circ \swap:T\sqcup T \hra T\tm I\hla \emptyset)$, so
	\[
		\b\circ (\Cyl_A\otm\mj)\circ\g=(\emptyset\hra T\tm I\cup_{g} T\tm I \hla \emptyset),
	\]
	for the map $g$ given by
 	\[
 	(A\sqcup J)\circ((J\sqcup E)\circ \swap)^{-1}=(A\sqcup J)\circ \swap\circ(J\sqcup E)=(A\sqcup J)\circ(E\sqcup J)\circ \swap=(A\sqcup E)\circ \swap.
 	\]
 By Lemma~\ref{lem:BunA-as-gluing} this is exactly $Bun_A$.
\end{proof}

From the classification of torus bundles in Theorem~\ref{thm:torus-bundles} we deduce the following.
\begin{cor}\label{cor:Bun-conj}
	For $A,B\in \SL$ we have
\[
	\b\circ(\Cyl_A\otm \mj)\circ\g = \b\circ(\Cyl_B\otm \mj)\circ\g
\]
	if and only if $A$ is conjugate in $\SL$ to $B$ or to $JB^{-1}J$.
\end{cor}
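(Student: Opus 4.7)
The plan is to reduce the claim to the classification theorem for torus bundles, Theorem~\ref{thm:torus-bundles}, via the identification of the cobordism $\b\circ(\Cyl_A\otm\mj)\circ\g$ with $Bun_A$ established in Proposition~\ref{prop:Bun}. Concretely, first I would observe that $\b\circ(\Cyl_A\otm\mj)\circ\g$ and $\b\circ(\Cyl_B\otm\mj)\circ\g$ are cobordisms from $\emptyset$ to $\emptyset$, so by convention~\ref{conv:equiv-of-cob} they are equal as morphisms in $\Cob$ if and only if the underlying closed oriented $3$-manifolds are orientation preserving homeomorphic. By Proposition~\ref{prop:Bun} these underlying manifolds are $Bun_A$ and $Bun_B$ respectively, so the equality of cobordisms is equivalent to $Bun_A\cong Bun_B$.

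Next, applying Theorem~\ref{thm:torus-bundles}, this happens if and only if $A$ and $B$ represent the same class in the quotient of $\SL$ by the equivalence relation $\approx$ generated by $A\sim CAC^{-1}$ (for $C\in\SL$) and $A\sim JA^{-1}J$. It remains to identify the class of $A$ under $\approx$ with the set $\{CAC^{-1}:C\in\SL\}\cup\{C(JA^{-1}J)C^{-1}:C\in\SL\}$.

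To this end, I would show that the involution $\iota(X)\coloneqq JX^{-1}J$ normalises the action of $\SL$ by conjugation. Indeed, for any $C\in\SL$, the matrix $JCJ$ again lies in $\SL$ (since $\det(JCJ)=\det(C)=1$), and a direct computation gives
\[
	\iota(CAC^{-1}) = J(CAC^{-1})^{-1}J = (JCJ)(JA^{-1}J)(JCJ)^{-1} = (JCJ)\,\iota(A)\,(JCJ)^{-1}.
\]
Therefore any word in the generating operations can be rewritten so that $\iota$ appears at most once (since $\iota^2=\mathrm{id}$ and $\iota$ can be pushed past a conjugation at the cost of replacing the conjugating matrix by its $J$-conjugate), yielding precisely the two-orbit description above. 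Combining this with the previous steps gives the stated equivalence.

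The argument is essentially a bookkeeping chain, so there is no real obstacle: the only point that needs care is the normalisation step showing that the equivalence class of $A$ is exhausted by conjugates of $A$ and of $JA^{-1}J$, and this follows from the easy identity displayed above together with $\iota^2=\mathrm{id}$.
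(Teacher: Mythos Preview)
Your proposal is correct and follows essentially the same approach as the paper, which simply states the corollary as an immediate consequence of Theorem~\ref{thm:torus-bundles} together with Proposition~\ref{prop:Bun}. You spell out explicitly the one detail the paper leaves implicit, namely that the equivalence relation generated by conjugation and $A\mapsto JA^{-1}J$ has equivalence classes given exactly by the union of the $\SL$-conjugacy classes of $A$ and of $JA^{-1}J$.
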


In contrast to the ``$\beta$-jump identity'' from Lemma~\ref{lem:beta-jump}, if both $\b$ and $\g$ are used, $\Cyl_A$ and $\mj$ can swap places, and no conjugation with $J$ occurs. More precisely:
\begin{lem}[The $\b$-$\g$-jump Identity]
\label{lem:beta-gamma-jump}
	For every $A\in \SL$ we have
\begin{equation}\label{eq:jump-Bun}
	\b\circ(\Cyl_A\otm \mj)\circ\g = \b\circ(\mj\otm \Cyl_A)\circ\g.
\end{equation}
\end{lem}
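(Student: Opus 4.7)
The strategy is to identify both sides with closed 3-manifolds of the form $Bun_{?}$ and then invoke the classification of torus bundles.

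For the left hand side, Proposition~\ref{prop:Bun} directly gives
\[
	\b\circ(\Cyl_A\otm\mj)\circ\g \;=\; Bun_A.
\]

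For the right hand side, the plan is to mimic the proof of Proposition~\ref{prop:Bun}. By Lemma~\ref{lem:beta-CylA-CylB} applied with $A$ replaced by $E$ and $B$ by $A$, we have $\b\circ(\mj\otm\Cyl_A)=(\emptyset\hra T\tm I\hla T\sqcup T:E\sqcup JA)$. Using Lemma~\ref{lem:J-sigma} in the form $\g=((J\sqcup E)\circ\swap:T\sqcup T\hra T\tm I\hla\emptyset)$, the composition $\b\circ(\mj\otm\Cyl_A)\circ\g$ is then a self-gluing of $T\tm I$ along the map
\[
	(E\sqcup JA)\circ((J\sqcup E)\circ\swap)^{-1}
	=(E\sqcup JA)\circ\swap\circ(J\sqcup E)
	=(E\sqcup JA)\circ(E\sqcup J)\circ\swap
	=(E\sqcup JAJ)\circ\swap,
\]
using $J^2=E$ and the same identity $\swap\circ(J\sqcup E)=(E\sqcup J)\circ\swap$ that appears in the proof of Proposition~\ref{prop:Bun}. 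By the Self-Gluing Lemma~\ref{lem:self-gluing} this is the self-gluing of the cobordism $(JAJ:T\hra T\tm I\hla T:E)$, i.e.\ the quotient of $T\tm I$ by $(x,0)\sim(JAJx,1)$. Setting $y=JAJx$ and using $(JAJ)^{-1}=JA^{-1}J$, this is the quotient by $(JA^{-1}Jy,0)\sim(y,1)$, which by definition~\eqref{eq:BunA} is exactly $Bun_{JA^{-1}J}$.

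Finally, applying Lemma~\ref{lem:conjugation} with $C=E$ gives $Bun_A\cong Bun_{JA^{-1}J}$, and combining the three steps yields the desired equality. (Equivalently, one could invoke Corollary~\ref{cor:Bun-conj}, noting that $A$ and $JA^{-1}J$ are related by $A\sim J(JA^{-1}J)^{-1}J$ which is just $A=A$.) The main obstacle here is not conceptual but bookkeeping: one must keep careful track of orientations and of which swap is absorbed into which boundary embedding, since $\g$ admits two equivalent descriptions, and the computation only yields the form recognised by Lemma~\ref{lem:self-gluing} if one chooses the description involving $\swap$.
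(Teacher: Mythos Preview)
Your proof is correct. It reaches the same endpoint as the paper---both arguments reduce to showing the right-hand side is $Bun_{JA^{-1}J}$ and then invoking $Bun_A\cong Bun_{JA^{-1}J}$---but you take a slightly different path to get there. The paper applies the $\b$-jump identity (Lemma~\ref{lem:beta-jump}) to rewrite $\b\circ(\mj\otm\Cyl_A)$ as $\b\circ(\Cyl_{JA^{-1}J}\otm\mj)$ in one stroke, and then Corollary~\ref{cor:Bun-conj} finishes it; you instead rerun the explicit self-gluing computation of Proposition~\ref{prop:Bun} for the right-hand side. Your route is more self-contained (it does not need Lemma~\ref{lem:beta-jump}) but longer, while the paper's route is a two-liner that reuses existing lemmas. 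Both rely only on the easy direction of the torus-bundle classification, i.e.\ Lemma~\ref{lem:conjugation}.
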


\begin{proof}
	 The cobordism $\b\circ(\mj\otm \Cyl_A)\circ\g$ agrees with $\b\circ(\Cyl_{JA^{-1}J}\otm\mj)\circ\g$ by Lemma~\ref{lem:beta-jump}, and by the last corollary this is equivalent to $\b\circ(\Cyl_A\otm \mj)\circ\g$.
	 \end{proof}

%\begin{rem}
%	One direction in Corollary~\ref{cor:Bun-conj} can be derived from Lemma~\ref{lem:beta-gamma-jump}.
%\end{rem}

%%%%%%%%
\subsection{Solid tori as cobordisms}
\label{sec:solid-tori}

In $\Cob$ we also consider the cobordism
\begin{equation}
\label{eq:unit}
	\text{the \emph{unit}}\quad \unit \coloneqq(\iota:T \hra D^2\tm S^1\hla \emptyset),
\end{equation}
where $\iota:T=S^1\tm S^1\hra D^2\tm S^1$ is the product of the inclusion of the boundary $S^1\hra D^2$ on the first factor (the curve $a$) and the identity on the second factor.

\begin{lem}
\label{lem:Da-e}
	We have $\Cyl_{D_a}\circ \unit = \unit$.
\end{lem}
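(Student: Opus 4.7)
The plan is to apply Corollary~\ref{cor:collar2} to reduce the identity to an extension problem on the solid torus, and then to exhibit an explicit extension of the Dehn twist $D_a$ from the boundary torus to the whole of $D^2\tm S^1$.

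First I would apply Corollary~\ref{cor:collar2} with $\Sigma=T$, $f=E$, $g_1=D_a$, $g_2=\iota$, $M=D^2\tm S^1$ and $\Sigma^s=\emptyset$, obtaining
\[
	\Cyl_{D_a}\circ\unit \;=\; \big( T \xra{\iota\circ D_a^{-1}} D^2\tm S^1 \hla \emptyset \big).
\]
By the definition of cobordism equivalence~\ref{conv:equiv-of-cob}, showing this equals $\unit=(\iota:T\hra D^2\tm S^1\hla\emptyset)$ amounts to producing an orientation-preserving self-homeomorphism $\Phi$ of the solid torus satisfying $\Phi\circ\iota=\iota\circ D_a$ on its boundary. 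In other words, the task is to extend the Dehn twist $D_a$ across $D^2\tm S^1$, which is natural since $\iota(a)=\partial D^2\tm\{pt\}$ is a meridian bounding the disc $D^2\tm\{pt\}$ inside.

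The key geometric step is to construct $\Phi$ as a ``screw''. Identifying $D^2=\{z\in\CC:|z|\leq 1\}$ and $S^1=\{w\in\CC:|w|=1\}$, I would set $\Phi(z,w)\coloneqq(zw,w)$. This is a homeomorphism with inverse $(z,w)\mapsto(z\bar w,w)$, and it preserves orientation since each slice $\{w=\mathrm{const}\}$ is simply rotated by the angle $\arg w$ (in the local chart $(r,\theta,\phi)$ its Jacobian is upper-triangular with unit diagonal). Restricted to the boundary $T=S^1\tm S^1$, $\Phi|_\partial$ sends $(e^{i\theta},e^{i\phi})\mapsto(e^{i(\theta+\phi)},e^{i\phi})$, acting on $H_1$ by $[a]\mapsto[a]$ and $[b]\mapsto[a]+[b]$; by Proposition~\ref{prop:MCG} this represents the mapping class of $D_a$.

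The main subtlety to clean up is that $\Phi|_\partial$ equals $\iota\circ D_a\circ\iota^{-1}$ only up to isotopy, while convention~\ref{conv:equiv-of-cob} demands literal equality of boundary identifications. I would resolve this with a standard collar trick: invoking Theorem~\ref{thm:htpic-iff-istpic}, fix an isotopy from $\Phi|_\partial$ to the chosen representative of $D_a$, realise it as a self-homeomorphism of a collar $\partial(D^2\tm S^1)\tm[0,\epsilon]\subset D^2\tm S^1$ that is the identity on the inner face, and extend by the identity to the rest of the solid torus; precomposing $\Phi$ with this homeomorphism yields one whose boundary restriction is exactly $\iota\circ D_a\circ\iota^{-1}$, delivering the required cobordism equivalence.
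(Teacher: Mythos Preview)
Your argument is correct and follows the same outline as the paper: apply Corollary~\ref{cor:collar2} to reduce to $(T\xra{\iota\circ D_a^{-1}} D^2\tm S^1\hla\emptyset)$, and then exhibit a self-homeomorphism of the solid torus extending the meridian Dehn twist. The paper simply cites \cite{McC} for the existence of such an extension, whereas you supply the explicit screw map $\Phi(z,w)=(zw,w)$ and additionally handle the isotopy-versus-equality issue on the boundary via the collar trick and Theorem~\ref{thm:htpic-iff-istpic}; this last point is a genuine detail the paper leaves implicit.
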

\begin{proof}
	By definition~\eqref{eq:unit} and Corollary~\ref{cor:collar2} the cobordism $\Cyl_{D_a}\circ \unit$ is equal to
	\[
	(T\xra{E} T\tm I\xla{D_a} T)\circ (T \xra{\iota} D^2\tm S^1\hla \emptyset)= (T\xra{\iota\circ D_a^{-1}} D^2\tm S^1 \hla \emptyset).
	\]
	This is equivalent to $\unit$, i.e.\ there is a homeomorphism $\Phi:D^2\tm S^1\to D^2\tm S^1$ such that $\Phi\circ \iota\circ D_a^{-1}=\iota$. This follows from the fact that the Dehn twist around the meridian can be extended to the solid torus (see~\cite{McC} for a construction of $\Phi$).
\end{proof}

We also consider the cobordism
\begin{equation}\label{eq:counit}
	\text{the \emph{counit}}\quad \counit\coloneqq (\emptyset \hra D^2\tm S^1\hla T :\iota\circ J).
\end{equation}

\begin{lem}
\label{lem:e-eta}
	We have $\counit=\b\circ(\mj\otm\unit)$.
\end{lem}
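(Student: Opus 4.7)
The plan is to unwind $\b\circ(\mj\otm\unit)$ directly from the definitions and recognise the resulting $3$-manifold as $\counit$ via two collar-type moves: first merging two stacked thickened tori into one, and then absorbing the remaining collar into the solid torus.

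First, by convention~\ref{conv:otm},
\[
\mj\otm\unit \;=\; \big( T\sqcup T\xra{E\sqcup\iota} (T\tm I)\sqcup (D^2\tm S^1)\xla{E} T \big),
\]
and by the composition rule~\ref{conv:composition}, the $3$-manifold underlying $\b\circ(\mj\otm\unit)$ is obtained from $(T\tm I)_{\mj}\sqcup (D^2\tm S^1)\sqcup (T\tm I)_{\b}$ by imposing, for each $x\in T$, the two identifications
\[
(x,1)\in (T\tm I)_{\mj} \;\sim\; (x,0)\in (T\tm I)_{\b}, \qquad \iota(x)\in D^2\tm S^1 \;\sim\; (Jx,1)\in (T\tm I)_{\b},
\]
with the source $T$ included at $t=0$ of $(T\tm I)_{\mj}$.

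Second, the first identification glues two copies of $T\tm I$ along a common torus boundary, producing a single $T\tm I$ by an orientation preserving rescaling of the interval. After this simplification the underlying manifold becomes $(T\tm I)\cup_\psi (D^2\tm S^1)$, where $\psi$ sends $(y,1)\in T\tm I$ to $\iota(Jy)\in \partial(D^2\tm S^1)$ (setting $y=Jx$ and using $J^2=E$), while the source $T$ still includes via $x\mapsto(x,0)$.

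Finally, I would apply the g-Collar Lemma~\ref{lem:collar} with $M=D^2\tm S^1$, with $\Sigma=\partial M$ identified with $T$ via $\iota$, with $g\coloneqq\iota\circ J$ (orientation reversing since $\iota$ preserves and $J$ reverses orientation), and with $f=E$. Under this identification the attaching map $\psi$ is precisely $g\circ\pi_1$, so the lemma furnishes an orientation preserving homeomorphism $h\colon M\cup_{g\circ\pi_1}(T\tm I)\to D^2\tm S^1$ satisfying $h\circ(E,0,1)=g\circ E=\iota\circ J$. This exhibits $\b\circ(\mj\otm\unit)$ as the cobordism $(\emptyset\hra D^2\tm S^1\hla T:\iota\circ J)=\counit$, as required.

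The only real obstacle is bookkeeping: tracking how the $J$ appearing on the second boundary of $\b$ combines with $\iota$ so that the residual attaching map is exactly $\iota\circ J$, matching the source inclusion of $\counit$. Once the gluing data is sorted, each of the two geometric moves is a direct application of a collar result already established in the preliminaries.
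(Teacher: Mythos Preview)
Your proof is correct and follows essentially the same route as the paper's: both unwind the composition, observe that the first gluing (of $(T\tm I)_{\mj}$ to $(T\tm I)_{\b}$ along the identity) reduces the picture to a single collar attached to $D^2\tm S^1$ via $\iota\circ J$, and then invoke the g-Collar Lemma~\ref{lem:collar} with $g=\iota\circ J$ to identify the result with $\counit$. Your intermediate ``rescaling'' step is exactly what the paper means by ``this does nothing,'' so there is no substantive difference.
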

\begin{proof}
	By definition, $\b\circ(\mj\otm\unit)$ is given by 
	\begin{align*}
		 (\emptyset\hra T\tm I\xla{E\sqcup J} T\sqcup T)\circ \big( (T\xra{E} T\tm I\xla{E} T)\otm (T \xra{\iota} D^2\tm S^1\hla \emptyset) \big).
	\end{align*}
	In this composition we perform two gluings: first glue $T\tm\{0\}$ in $\mj$ to $T\tm\{0\}$ in $\b$ via $E\circ E^{-1}=E$ (and this does nothing), then glue the boundary of $D^2\times S^1$ to $T\tm\{1\}$ using $J\circ\iota^{-1}$. 
	Equivalently, this glues $T\times I$ to $D^2\times S^1$ by sending $T\tm\{1\}$ to $\partial(D^2\times S^1)$ using the projection $\pi_{1}:T\tm\{1\}\to T$, followed by $J$ and $\iota$, so:
	\[
	\b\circ(\mj\otm\unit) =(\emptyset\hra D^2\tm S^1\cup_{\iota\circ J\circ \pi_{1}} T\tm I \hla T :E)
	\]
	By the g-Collar Lemma~\ref{lem:collar} this is equivalent to $\counit=(\emptyset \hra D^2\tm S^1\hla T :\iota\circ J)$.
\end{proof}

\subsection{Lens spaces as cobordisms}
\begin{dfn}
	Define the lens space $L(p,q)\coloneqq H_1\cup_{\varphi_{p,q}} H_2$ where $H_i=D^2\tm S^1$ and the homeomorphism $\varphi_{p,q}:\partial H_2\to\partial H_1$ is in homology given by
	\[
		\varphi_{p,q}\big[ \partial D^2\tm\{pt\} \big]= q \big[ \partial D^2\tm \{pt\} \big] + p \big[ \{pt\}\tm S^1 \big].
	\]
	In particular, $L(p,q)$ is the $p/q$-surgery on the unknot in $S^3$ and has fundamental group $\Z/p\Z$.
\end{dfn}
Note that we follow the notation of \cite{PS}, and that $L(p,q)$ is denoted $L_{q/p}$ in \cite{Hatcher}. 

\begin{lem}\label{lem:cob-is-lens-space}
	For $A=\matrica{p & r \\ q & s}\in\SL$ the cobordism $\counit\circ\Cyl_A\circ \unit$
	is homeomorphic to the lens space $L(p,q)$.
\end{lem}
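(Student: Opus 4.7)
The plan is to unwind the composition $\counit\circ\Cyl_A\circ\unit$ in $\Cob$ to exhibit the underlying $3$-manifold as the union of two solid tori glued along their boundary tori, and then to read off the homology class of the image of a meridian to recognise the lens space $L(p,q)$ directly from its defining gluing.

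First, I would apply Corollary~\ref{cor:collar2} with $f=E$, $g_1=A$, $g_2=\iota$ to simplify
\[
	\Cyl_A\circ\unit\;=\;(T\xra{\iota\circ A^{-1}} D^2\tm S^1\hla \emptyset).
\]
Thus $\Cyl_A\circ\unit$ is a solid torus whose outgoing boundary is parametrised by $\iota\circ A^{-1}$. Composing this with $\counit=(\emptyset\hra D^2\tm S^1\xla{\iota\circ J} T)$ according to the gluing rule~\ref{conv:composition} yields the closed $3$-manifold
\[
	M_0\cup_\varphi M_1,\qquad \varphi\;=\;(\iota\circ J)\circ(\iota\circ A^{-1})^{-1}\;=\;\iota\circ(JA)\circ\iota^{-1},
\]
where $M_0$ and $M_1$ are two copies of $D^2\tm S^1$ and $\varphi\colon\partial M_1\to\partial M_0$.

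Finally, to match this manifold with $L(p,q)$, I would compute the action of $\varphi$ on $H_1(\partial M_1)$. By the construction of $\iota$, the curve $\iota(a)$ is the meridian $\partial D^2\tm\{pt\}$ and $\iota(b)$ is the longitude $\{pt\}\tm S^1$. By Proposition~\ref{prop:MCG}, the matrix $A$ induces $a\mapsto pa+qb$ on $H_1(T)$, while $J$ swaps $a$ and $b$; hence $JA$ sends $a\mapsto qa+pb$. Consequently, $\varphi$ sends the meridian of $M_1$ to a curve representing $q[\partial D^2\tm\{pt\}]+p[\{pt\}\tm S^1]$ in $\partial M_0$, which is precisely the defining gluing $\varphi_{p,q}$. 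Therefore $\counit\circ\Cyl_A\circ\unit\cong L(p,q)$.

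There is little in this calculation beyond bookkeeping; the one point to mind is that the inverse produced by Corollary~\ref{cor:collar2} is what converts the row-vector action of $A$ on $(a,b)$ into the correct first column of the lens-space data. Orientations cause no trouble, because $JA$ has determinant $-1$, so the boundary gluing is automatically orientation reversing, as required for a well-defined oriented $3$-manifold.
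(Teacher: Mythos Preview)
Your proof is correct and follows essentially the same route as the paper: both arrive at the gluing map $\iota\circ JA\circ\iota^{-1}$ between the two solid tori and then read off the image of the meridian. The only cosmetic difference is that you absorb the cylinder into $\unit$ via Corollary~\ref{cor:collar2}, whereas the paper absorbs it into $\counit$ via Corollary~\ref{cor:collar1}; the resulting gluing map is identical.
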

\begin{proof}
	By Corollary~\ref{cor:collar1} we have
	\[
		 \counit\circ \Cyl_A = (\emptyset \hra D^2\tm S^1\xla{\iota\circ J} T)\circ (T\xra{E} T\tm I\xla{A} T)  = (\emptyset \hra D^2\tm S^1\xla{\iota\circ J\circ A} T).
	\]
	Therefore,
	\begin{align*}
        \counit\circ\Cyl_A\circ \unit &=  (\emptyset \hra D^2\tm S^1\xla{\iota\circ J\circ A} T) \circ (T \xra{\iota} D^2\tm S^1\hla \emptyset)\\
        &=(\emptyset\hra D^2\tm S^2\cup_{\iota\circ J\circ A\circ{\iota}^{-1}} D^2\tm S^1\hla \emptyset).
    \end{align*}
    This is of the shape $H_1\cup_\varphi H_2$ for the map $\varphi=J\circ A=\matrica{q & s \\ p & r}$.
    This sends $a$ to $q a + p b$, so we indeed have $\counit\circ\Cyl_A\circ \unit=L(p,q)$.
\end{proof}

\begin{dfn}
	We say that two matrices $A,A'$ from $\SL$ are \emph{1-step lens-inseparable} when either $A'=D_b^{-n}AD_a^k$ for some $n,k\in\Z$ or $A'=JA^{-1}J$. We say that two matrices are \emph{lens-inseparable} when there is a finite sequence of matrices connecting them, so that any consecutive pair is 1-step lens-inseparable.
\end{dfn}

\begin{lem}\label{lem:insep-cond}
	The matrices $A=\matrica{ p & r \\ q & s }$ and $A'=\matrica{ p' & r' \\ q' & s' }$ are lens inseparable if and only if $p=p'$ and either  $q\equiv q'\pmod*{p}$ or $qq'\equiv 1\pmod*{p}$.
\end{lem}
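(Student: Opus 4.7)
The plan is to prove both directions by direct matrix computation, with Remark~\ref{row-column} as the key structural input.

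\emph{Setup.} I would first compute that $D_b^{-n}AD_a^k$ has first column $(p,\,np+q)^T$, while $JA^{-1}J=\matrica{p & -q\\ -r & s}$ has first column $(p,\,-r)^T$. Since $ps-qr=1$ forces $\gcd(p,q)=1$, the element $q$ is invertible modulo $p$, and the relation $qr\equiv -1\pmod{p}$ yields $-r\equiv q^{-1}\pmod{p}$.

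For the ``only if'' direction, observe that a Type~1 move $A'=D_b^{-n}AD_a^k$ keeps $q\bmod p$ unchanged, whereas a Type~2 move $A'=JA^{-1}J$ sends $q\bmod p$ to $-r\equiv q^{-1}\bmod p$. Hence the pair consisting of $p$ together with the unordered set $\{q,q^{-1}\}\bmod p$ is invariant along any chain of 1-step lens-inseparabilities, which immediately forces $p=p'$ and $q'\in\{q,q^{-1}\}\pmod{p}$, i.e.\ the stated congruence condition.

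For the ``if'' direction I would split into two cases. If $q'\equiv q\pmod{p}$, pick $n\in\Z$ with $q'=np+q$; then $D_b^{-n}A$ and $A'$ both lie in $\SL$ and share the first column $(p,q')^T$, so Remark~\ref{row-column} supplies $k\in\Z$ with $A'=(D_b^{-n}A)D_a^k=D_b^{-n}AD_a^k$, a single Type~1 step. If instead $qq'\equiv 1\pmod{p}$, set $A''\coloneqq JA^{-1}J$ (a single Type~2 step from $A$); its first column is $(p,-r)^T$ with $-r\equiv q^{-1}\equiv q'\pmod{p}$, so the previous case applies to the pair $(A'',A')$ and yields a 2-step chain from $A$ to $A'$. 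The degenerate case $p=0$ (which forces $q=\pm 1$ and collapses both congruences to the equality $q=q'$) is handled uniformly.

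The whole proof reduces to straightforward matrix manipulation, so I do not expect any serious obstacle; the only substantive ingredient is Remark~\ref{row-column}, which is already established in the excerpt.
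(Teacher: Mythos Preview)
Your proof is correct and follows essentially the same approach as the paper's: both directions are handled by direct computation of the first column under the two types of moves. The only cosmetic differences are that you invoke Remark~\ref{row-column} rather than writing down the explicit exponent of $D_a$, and in the second case of the ``if'' direction you build a two-step chain through $JA^{-1}J$, whereas the paper uses a three-step chain through an auxiliary matrix $B=\matrica{p & -q'\\ q & -m}$.
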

\begin{proof}	
	If $A'=D_b^{-n}AD_a^k$ then comparing entries gives $p'=p$, $q'=q+np$, $r'=r+kp$ and $s'=s+nr+k(q+np)$. In particular, $q'\equiv q\pmod*{p}$. If $A'=JA^{-1}J$ then $p'=p$, $q'=-r$, $r'=-q$, and $s'=s$. In particular, $ps'-q'(-q)=p's'-q'r'=1$, being the determinant of $A'\in\SL$, so $qq'\equiv1\pmod*{p}$. 
	
	In the other direction, if $p'=p$ and $q'=q+np$ then $A'=D_b^{-n}AD_a^{(s+nr)r'-s'r}$ is 1-step lens-inseparable from $A$. In the other case, $p=p'$ and $qq'=1+pm$ for some $m\in\Z$, so
\[
	A=\matrica{ p & r \\ q & s },\;
	B=\matrica{ p & -q' \\ q & -m },\;
	JB^{-1}J=\matrica{ p & -q \\ q' & -m },\;
	A'=\matrica{ p' & r' \\ q' & s' }
\]
	is a sequence of 1-step lens-inseparable matrices that connects $A$ and $A'$.
\end{proof}

\begin{thm}\label{thm:lens-spaces}
	The cobordisms $\counit\circ\Cyl_A\circ \unit$ and $\counit\circ\Cyl_{A'}\circ \unit$ are equivalent if and only if the matrices $A$ and $A'$ are lens-inseparable.
\end{thm}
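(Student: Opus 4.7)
The strategy is to identify both cobordisms with lens spaces and then reduce the statement to the classical Reidemeister classification, with Lemma~\ref{lem:insep-cond} serving as the bridge between its arithmetic condition and the matrix-theoretic notion of lens-inseparability. Write $A=\matrica{p & r \\ q & s}$ and $A'=\matrica{p' & r' \\ q' & s'}$. By Lemma~\ref{lem:cob-is-lens-space} the two cobordisms are homeomorphic to $L(p,q)$ and $L(p',q')$ respectively, and since both go from $\emptyset$ to $\emptyset$, equivalence in $\Cob$ (convention \ref{conv:equiv-of-cob}) is just orientation-preserving homeomorphism of the underlying closed oriented 3-manifolds.

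For the ``if'' direction I would proceed by transitivity, checking each of the two elementary lens-inseparability moves separately. If $A'=D_b^{-n}AD_a^k$, iterating Lemma~\ref{lem:Da-e} gives $\Cyl_{D_a^k}\circ\unit=\unit$, and a dual statement $\counit\circ\Cyl_{D_b^{-n}}=\counit$ can be derived as follows: by Corollary~\ref{cor:collar1},
\[
\counit\circ\Cyl_{D_b^{-n}}=\big(\emptyset\hra D^2\tm S^1\xla{\iota\circ J\circ D_b^{-n}}T\big);
\]
using the identity $JD_b^{-n}=D_a^nJ$ from \eqref{eq:J} this rewrites as $\big(\emptyset\hra D^2\tm S^1\xla{\iota\circ D_a^n\circ J}T\big)$, after which the solid-torus Dehn-twist extension $\Phi$ used in the proof of Lemma~\ref{lem:Da-e} absorbs $D_a^n$ and yields $\counit$. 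Multiplying these two identities together gives $\counit\circ\Cyl_{A'}\circ\unit=\counit\circ\Cyl_A\circ\unit$. The remaining case $A'=JA^{-1}J$ corresponds to the self-homeomorphism of the lens space that interchanges the two Heegaard solid tori: a direct computation with Lemma~\ref{lem:cob-is-lens-space} gives $\counit\circ\Cyl_{JA^{-1}J}\circ\unit\cong L(p,-r)$, and $\det A=1$ forces $q(-r)\equiv 1\pmod{p}$, so the Heegaard-swap realises $L(p,q)\cong_+ L(p,-r)$.

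For the ``only if'' direction I would simply invoke the Reidemeister--Brody classification: $L(p,q)\cong_+ L(p',q')$ holds if and only if $p=p'$ and either $q\equiv q'\pmod{p}$ or $qq'\equiv 1\pmod{p}$. By Lemma~\ref{lem:insep-cond} this is precisely the condition that $A$ and $A'$ be lens-inseparable, finishing the proof. I expect the main obstacle to be the $J A^{-1}J$ move of Case~2 in the ``if'' direction, since it demands an explicit orientation-preserving self-homeomorphism of the genus-one Heegaard splitting swapping its two sides; if one prefers to avoid constructing this by hand, the whole ``if'' direction can equally well be subsumed into the appeal to Reidemeister's classification.
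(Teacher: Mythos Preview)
Your proposal is correct and matches the paper's approach: both reduce to the Reidemeister classification via Lemma~\ref{lem:cob-is-lens-space} and Lemma~\ref{lem:insep-cond}. The paper simply invokes Reidemeister for both implications at once, whereas you additionally verify the ``if'' direction by hand on the elementary moves (using Lemma~\ref{lem:Da-e}, its dual for $\counit$, and the Heegaard swap for $JA^{-1}J$); as you yourself note, this extra work is optional and can be absorbed into the Reidemeister appeal.
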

\begin{proof}
	By a classical result of Reidemeister, lens spaces $L(p,q)$ and $L(p',q')$ are homeomorphic if and only if $p=p'$ and either $q\equiv q'$ or $qq'\equiv 1$ modulo $p$. These conditions are by Lemma~\ref{lem:insep-cond} equivalent to lens-inseparability of the matrices $A$ and $A'$, for which $\counit\circ\Cyl_A\circ \unit=L(p,q)$ and $\counit\circ\Cyl_{A'}\circ \unit=L(p',q')$ by Lemma~\ref{lem:cob-is-lens-space}. 
\end{proof}

We will later also need the following corollary of this theorem.

\begin{lem}\label{cyl-epsilon}\label{eta-cyl}
	If $\Cyl_A\circ\unit= \Cyl_{A'}\circ\unit$, then $A'=AD_a^m$ for some $m\in\Z$.
	If $\counit\circ\Cyl_A= \counit\circ\Cyl_{A'}$, then $A'=D_b^mA$ for some $m\in\Z$.
\end{lem}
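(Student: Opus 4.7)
The plan is to reduce both claims to identifying the image of the restriction homomorphism $r\colon \MCG^+(D^2 \tm S^1) \to \MCG^+(T) \cong \SL$, using the g-Collar Lemma to collapse each of the relevant cobordisms onto a single solid torus. The key inputs are Lemma~\ref{lem:Da-e}, which produces $D_a \in \mathrm{im}(r)$, and the elementary fact that any self-homeomorphism of the solid torus must preserve, up to isotopy, the unique essential simple closed curve on $T$ bounding a disk in $D^2 \tm S^1$, namely the meridian $a$.

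First I would apply Corollary~\ref{cor:collar2} to simplify $\Cyl_A \circ \unit = (T \xra{\iota \circ A^{-1}} D^2 \tm S^1 \hla \emptyset)$, and analogously for $A'$. By the equivalence of cobordisms in convention~\ref{conv:equiv-of-cob}, the hypothesis then produces an orientation-preserving self-homeomorphism $\Phi$ of $D^2 \tm S^1$ with $\Phi \circ \iota \circ A^{-1} = \iota \circ A'^{-1}$. Via Proposition~\ref{prop:MCG} and Theorem~\ref{thm:htpic-iff-istpic}, the boundary restriction $\iota^{-1} \Phi|_{\partial} \iota$ represents the matrix $A'^{-1} A \in \SL$, which therefore lies in $\mathrm{im}(r)$. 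Once it is established that $\mathrm{im}(r) = \langle D_a \rangle$, this yields $A'^{-1} A = D_a^m$ for some $m$, so $A' = A D_a^{-m}$, of the claimed form.

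For the second claim I would argue dually via Corollary~\ref{cor:collar1}, writing $\counit \circ \Cyl_A = (\emptyset \hra D^2 \tm S^1 \xla{\iota \circ J \circ A} T)$. The hypothesis yields $\Phi$ with $\Phi \circ \iota \circ J \circ A = \iota \circ J \circ A'$, so using $J^{-1} = J$ the element $J (A' A^{-1}) J \in \SL$ lies in $\mathrm{im}(r) = \langle D_a \rangle$, hence equals $D_a^n$ for some $n$. Conjugating by $J$ and applying $J D_a^n J = D_b^{-n}$ from~\eqref{eq:J} gives $A' A^{-1} = D_b^{-n}$, i.e., $A' = D_b^{-n} A$, as required.

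The main obstacle is the precise identification $\mathrm{im}(r) = \langle D_a \rangle$. Meridian preservation, combined with the determinant-one condition, immediately forces any element of $\mathrm{im}(r)$ to have first column $(\pm 1, 0)^T$, hence be of the form $\pm D_a^m$; Lemma~\ref{lem:Da-e} provides the inclusion $\langle D_a \rangle \subseteq \mathrm{im}(r)$, and what remains is to argue that the sign ambiguity collapses. This can be approached either by a direct orientation-sensitive analysis of self-homeomorphisms of $D^2 \tm S^1$ compatible with the $\iota$-parametrization, or indirectly by composing with $\counit \circ \Cyl_B$ for varying $B \in \SL$ and invoking Lemma~\ref{lem:cob-is-lens-space} together with Reidemeister's classification to pin down the first column of $A$, whereupon Remark~\ref{row-column} delivers the desired conclusion.
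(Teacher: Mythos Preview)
Your primary route---collapsing each side onto a single solid torus via Corollary~\ref{cor:collar2} and then reading off $A'^{-1}A$ (respectively $J(A'A^{-1})J$) as the boundary restriction of a self-homeomorphism of $D^2\times S^1$---is genuinely different from the paper's argument and more geometric. The paper instead post-composes the hypothesis with $\counit$ and with $\counit\circ\Cyl_{D_a}$, applies Theorem~\ref{thm:lens-spaces} to obtain that the pairs $A,A'$ and $D_aA,D_aA'$ are each lens-inseparable, extracts $p=p'$ and $p+q=p'+q'$ from Lemma~\ref{lem:insep-cond}, hence $q=q'$, and finishes with Remark~\ref{row-column}. This is exactly the ``indirect'' alternative you sketch at the end, so your fallback coincides with the paper's proof.

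The gap is in your primary route: the sign ambiguity does \emph{not} collapse by an orientation argument. The involution $(z,w)\mapsto(\bar z,\bar w)$ is an orientation-preserving self-homeomorphism of $D^2\times S^1$ whose boundary restriction is $-E$, so in fact $\mathrm{im}(r)=\{\pm D_a^{\,m}:m\in\Z\}$, strictly larger than $\langle D_a\rangle$. No ``direct orientation-sensitive analysis'' of solid-torus homeomorphisms can therefore exclude $A'^{-1}A=-D_a^{\,m}$, and you are forced back to the lens-space comparison---i.e.\ the paper's own method---to determine the first column. (Indeed the same flip exhibits $\Cyl_A\circ\unit=\Cyl_{-A}\circ\unit$; the paper's route absorbs this sign through Lemma~\ref{lem:insep-cond}, which asserts $p=p'$ rather than $|p|=|p'|$.)
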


\begin{proof}
	We prove the first claim, and the proof of the second is similar. 
	The equality $\Cyl_A\circ\unit= \Cyl_{A'}\circ\unit$ implies $\counit\circ\Cyl_A\circ\unit= \counit\circ\Cyl_{A'}\circ\unit$, as well as $\counit\circ \Cyl_{D_a}\circ\Cyl_A\circ\unit= \counit\circ \Cyl_{D_a}\circ\Cyl_{A'}\circ\unit$. Then Theorem~\ref{thm:lens-spaces} says that $A$ and $A'$ are lens-inseparable, as well as $D_aA$ and $D_aA'$. 
	If we denote
	\[ 
	A=\matrica{ p & r \\ q & s } \quad\text{and}\quad A'=\matrica{ p' & r' \\ q' & s' }
	\]
	then the former implies $p=p'$, and the latter $p+q=p+q'$, hence $q=q'$. Now Remark~\ref{row-column} finishes the proof.
\end{proof}

%%%%%%%%
\section{Two subcategories of \texorpdfstring{$\Cob$}{Cob}}
\label{sec:two-cats}

We consider the following two categories.

\begin{dfn}\label{dfn:bMCG}
\begin{enumerate}
\item
	Let $\bMCG$ be the subcategory of $\Cob$ whose objects are sequences of the object $T$, and whose arrows are generated by $\mj,\b,\g,\Cyl_A$, using the monoidal structure $\otm$, composition $\circ$, and symmetry~$\tau$.

\item
	Let $\beMCG$ be the subcategory of $\Cob$ whose objects are sequences of the object $T$, and whose arrows are generated by $\mj,\b,\g,\Cyl_A$ and additionally $\unit$, using the monoidal structure $\otm$, composition $\circ$, and symmetry $\tau$.
\end{enumerate}
\end{dfn}

Let us now fix an arbitrary symmetric strict monoidal category $\calC$.
In Section~\ref{subsec:Tb-objects} we introduce the notion of a $T^\b$-object in $\calC$, and in Section~\ref{subsec:Tbe-objects} of a $T^\be$-object in $\calC$, and we explore their properties. In Section~\ref{subsec:universality} we show that the torus $T\in\bMCG$ is a universal example of a $T^\b$-object, and $T\in\beMCG$ is a universal $T^\be$-object.

%%%%%%%%%%%
\subsection{\texorpdfstring{$T^\b$}{Tb}-objects}
\label{subsec:Tb-objects}

\begin{dfn}\label{dfn:Tb-object}
	Fix a symmetric, strict monoidal category $\calC$, with the unit object $\1$, the symmetry $\tau$, and the identity arrows $\mj$. A $T^\b$-\emph{object} in $\calC$ is an object $\O$ together with the following data.
\begin{itemize}
\item
	There is a group action of $\SL$ on $\O$, that is, a group homomorphism
	\[
		\rho^\O:\SL\ra \Aut(\O).
	\]
\item
	There is a non-degenerate, symmetric pairing $\b^\O:\O\otm \O\to \1$, with the corresponding copairing $\g^\O:\1\to \O\otm \O$. Explicitly, this means
	\begin{align}
		\b^\O\circ\tau=\b^\O&,\label{eq:tau-beta}\\
		\tau\circ\g^\O=\g^\O&,\label{eq:tau-gamma}\\
		(\b^\O\otm\mj)\circ(\mj\otm\g^\O)=\mj&=(\mj\otm\beta^\O)\circ(\g^\O\otm\mj).\label{eq:snake}
	\end{align}
\item
	Moreover, for $D_a,D_b\in\SL$ from \eqref{eq:Da,Db} the following equality holds
\begin{equation}\label{eq:Tb-object}
	\b^\O\circ(\rho^\O_{D_a}\otm \mj) = \b^\O\circ(\mj\otm \rho^\O_{D_b}).
\end{equation}
\end{itemize}
\end{dfn}
\begin{rem}
One can show that the second equality in~\eqref{eq:snake} follows from the first, using symmetry and its naturality. We include both of them for convenience.
\end{rem}

\begin{lem}\label{lem:Tb-variation}
The following equality holds for every $T^\b$-object $\O$:
\begin{equation}
	(\rho^\O_{D_a}\otm \mj)\circ\g^\O= (\mj\otm \rho^\O_{D_b})\circ\g^\O.
	\label{eq:Tb-object-gamma}
\end{equation}
Moreover, we have the equalities obtained from \eqref{eq:Tb-object} and \eqref{eq:Tb-object-gamma} by exchanging $D_a$ and $D_b$, or by replacing $D_a$ with $D^{-1}_a$ and $D_b$ with $D^{-1}_b$.
\end{lem}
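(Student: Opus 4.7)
My plan is to exploit the self-duality of $\O$ furnished by $\b^\O$ and $\g^\O$. The snake identities in \eqref{eq:snake} imply that the two natural maps
\[
\mathrm{End}_\calC(\O)\;\rightleftarrows\;\mathrm{Hom}_\calC(\1,\O\otm\O),\qquad
f\mapsto(f\otm\mj)\circ\g^\O,\qquad
\alpha\mapsto(\mj\otm\b^\O)\circ(\alpha\otm\mj),
\]
are mutually inverse, so to establish \eqref{eq:Tb-object-gamma} it will suffice to check that $(\rho^\O_{D_a}\otm\mj)\circ\g^\O$ and $(\mj\otm\rho^\O_{D_b})\circ\g^\O$ are sent by the second map to the same endomorphism of $\O$.

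For the left-hand side, I would use the interchange law to rewrite the image as $(\rho^\O_{D_a}\otm\b^\O)\circ(\g^\O\otm\mj)=\rho^\O_{D_a}\circ(\mj\otm\b^\O)\circ(\g^\O\otm\mj)$, which collapses to $\rho^\O_{D_a}$ by the snake identity. For the right-hand side, interchange produces $\b^\O\circ(\rho^\O_{D_b}\otm\mj)$ as an inner factor; by symmetry of $\b^\O$ together with naturality of $\tau$ this equals $\b^\O\circ(\mj\otm\rho^\O_{D_a})$, which is a reformulation of \eqref{eq:Tb-object} through \eqref{eq:tau-beta}. Substituting, sliding $\rho^\O_{D_a}$ out past $\g^\O\otm\mj$, and one more application of the snake identity will again give $\rho^\O_{D_a}$, as required.

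The ``moreover'' assertions will follow from the same template applied to variants of \eqref{eq:Tb-object}. The exchange-of-indices variant $\b^\O\circ(\rho^\O_{D_b}\otm\mj)=\b^\O\circ(\mj\otm\rho^\O_{D_a})$ is exactly the one I already use above, obtained by pre-composing \eqref{eq:Tb-object} with $\tau$ and invoking \eqref{eq:tau-beta}. The inverse variant $\b^\O\circ(\rho^\O_{D_a^{-1}}\otm\mj)=\b^\O\circ(\mj\otm\rho^\O_{D_b^{-1}})$ will be obtained by pre-composing \eqref{eq:Tb-object} with $\rho^\O_{D_a^{-1}}\otm\rho^\O_{D_b^{-1}}$ and cancelling using the group-homomorphism property of $\rho^\O$. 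In each case, repeating the duality argument of the previous paragraph verbatim converts the $\b$-relation into the corresponding $\g$-relation.

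I do not foresee any conceptual obstacle; the only point worth checking carefully is the mutual inverseness of the two maps displayed above, but this is immediate from the two snake identities in \eqref{eq:snake} and the interchange law.
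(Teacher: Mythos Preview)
Your proof is correct and uses the same ingredients as the paper's: the swapped and inverse variants of \eqref{eq:Tb-object} are derived exactly as you describe, and the passage to the $\g$-side rests on the snake identities together with the swapped $\b$-relation. The paper presents that last step as an explicit chain of rewrites (inserting a snake, passing through $\g^\O\otm\g^\O$, then collapsing a snake), whereas you package the same moves more cleanly as the bijection $\mathrm{End}_\calC(\O)\cong\mathrm{Hom}_\calC(\1,\O\otm\O)$; the underlying content is identical.
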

\begin{proof}
	The equality obtained from \eqref{eq:Tb-object} by exchanging $D_a$ and $D_b$ is proven by
\begin{equation*}
\begin{split}
	\b^\O\circ(\rho^\O_{D_b}\otm \mj)
	= \b^\O\circ\tau\circ(\rho^\O_{D_b}\otm \mj) 
	&= \b^\O\circ(\mj\otm \rho^\O_{D_b})\circ\tau\\
	& = \b^\O\circ(\rho^\O_{D_a}\otm \mj)\circ\tau \\
	&= \b^\O\circ\tau\circ(\mj\otm \rho^\O_{D_a}) 
	= \b^\O\circ(\mj\otm \rho^\O_{D_a}).
\end{split}
\end{equation*}
For the equality obtained by replacing $D_a$ with $D^{-1}_a$ and $D_b$ with $D^{-1}_b$, we just have to precompose \eqref{eq:Tb-object} with $(\rho^\O_{D_a^{-1}} \otm \rho^\O_{D_b^{-1}} )$.

We prove \eqref{eq:Tb-object-gamma} using the ``snake identities''~\eqref{eq:snake} and the key equality~\eqref{eq:Tb-object}. The proof is presented diagrammatically in Figure~\ref{fig:diag-proof-Tb-object-gamma} and algebraically by:
\begin{equation*}
\begin{split}
	(\rho^\O_{D_a}\otm \mj)\circ\g^\O 
	& = ((\mj\circ\rho^\O_{D_a})\otm \mj)\circ\g^\O
\\
	& = \Big(\big( (\mj\otm\beta^\O)\circ(\g^\O\otm\mj)\circ \rho^\O_{D_a} \big)\otm \mj \Big)\circ\g^\O
\\
	& = \Big(\big( (\mj\otm\beta^\O)\circ(\mj\otm\mj\otm \rho^\O_{D_a})\circ(\g^\O\otm\mj) \big)\otm \mj \Big)\circ\g^\O
\\
	& = \Big( \mj\otm \big( \beta^\O\circ(\mj\otm\rho^\O_{D_a}) \big)\otm \mj \Big) \circ (\g^\O\otm\g^\O)
\\
	& = (\mj\otm(\beta^\O\circ(\rho^\O_{D_b}\otm\mj))\otm \mj) \circ (\g^\O\otm\g^\O)
\\
& = \Big( \mj\otm \big( (\beta^\O\otm \mj)\circ(\mj\otm\g^\O) \circ \rho^\O_{D_b} \big) \Big)\circ\g^\O
\\
& = (\mj\otm\rho^\O_{D_b})\circ\g^\O.
\end{split}
\end{equation*}
	For the last two claims in the case of \eqref{eq:Tb-object-gamma} we proceed as before.
\end{proof}
\begin{figure}[htbp!]
    \centering
\begin{tikzpicture}[baseline=0.2cm,scale=0.8]
	\draw[red, thick] 		(-0.3,-0.4) -- (-0.3,0.35);
	\draw[thick] 			(-0.3,0.4) to[distance=0.55cm,out=90,in=90] (0.3,0.4);
	\draw[thick] 			(0.3,-0.4) -- (0.3,0.35);
\end{tikzpicture}\quad=\:
\begin{tikzpicture}[baseline=-0.2cm,scale=0.8]
	\draw[thick] 			(-0.3,-1.5) -- (-0.3,-0.45);
	\draw[red, thick] 		(-0.3,-0.4) -- (-0.3,0.35);
	\draw[thick] 			(-0.3,0.4) to[distance=0.55cm,out=90,in=90] (0.3,0.4);
	\draw[thick] 			(0.3,-1.5) -- (0.3,0.35);
\end{tikzpicture}\quad=\:
\begin{tikzpicture}[baseline=-0.2cm,scale=0.8]
	\draw[thick] 			(-1.5,-1.4) -- (-1.5,-0.85);
	\draw[thick] 			(-1.5,-0.8) to[distance=0.55cm,out=90,in=90] (-0.9,-0.8);
	\draw[thick] 			(-0.9,-0.85) to[distance=0.55cm,out=-90,in=-90] (-0.3,-0.85);
	\draw[thick] 			(-0.3,-0.8) -- (-0.3,-0.45);
	\draw[red, thick] 		(-0.3,-0.4) -- (-0.3,0.35);
	\draw[thick] 			(-0.3,0.4) to[distance=0.55cm,out=90,in=90] (0.3,0.4);
	\draw[thick] 			(0.3,-1.4) -- (0.3,0.35);
\end{tikzpicture}\quad=\quad
\begin{tikzpicture}[baseline=-0.2cm,scale=0.8]
	\draw[thick] 			(-1.5,-1.4) -- (-1.5,-0.05);
	\draw[thick] 			(-1.5,0) to[distance=0.55cm,out=90,in=90] (-0.9,0);
	\draw[thick] 			(-0.9,-0.8) -- (-0.9,-0.05);
	\draw[thick] 			(-0.9,-0.85) to[distance=0.55cm,out=-90,in=-90] (-0.3,-0.85);
	\draw[red, thick] 		(-0.3,-0.8) -- (-0.3,-0.05);
%	\draw[thick] 			(-0.3,0) -- (-0.3,0.35);
	\draw[thick] 			(-0.3,0) to[distance=0.55cm,out=90,in=90] (0.3,0);
	\draw[thick] 			(0.3,-1.4) -- (0.3,-0.05);
\end{tikzpicture}\quad=\quad
\begin{tikzpicture}[baseline=-0.2cm,scale=0.8]
	\draw[thick] 			(-1.5,-1.4) -- (-1.5,-0.05);
%	\draw[thick] 			(-1.5,0) -- (-1.5,0.35);
	\draw[thick] 			(-1.5,0) to[distance=0.55cm,out=90,in=90] (-0.9,0);
%	\draw[thick] 			(-0.9,0) -- (-0.9,0.35);
	\draw[blue, thick] 	(-0.9,-0.8) -- (-0.9,-0.05);
	\draw[thick] 			(-0.9,-0.85) to[distance=0.55cm,out=-90,in=-90] (-0.3,-0.85);
	\draw[thick] 			(-0.3,-0.8) -- (-0.3,-0.05);
	\draw[thick] 			(-0.3,0) to[distance=0.55cm,out=90,in=90] (0.3,0);
	\draw[thick] 			(0.3,-1.4) -- (0.3,-0.05);
\end{tikzpicture}\quad=\:\:
\begin{tikzpicture}[baseline=-0.2cm,scale=0.8]
	\draw[thick] 			(-1.5,-1.4) -- (-1.5,0.35);
	\draw[thick] 			(-1.5,0.4) to[distance=0.55cm,out=90,in=90] (-0.9,0.4);
	\draw[blue, thick] 	(-0.9,-0.35) -- (-0.9,0.35);
	\draw[thick] 			(-0.9,-0.8) -- (-0.9,-0.4);
	\draw[thick] 			(-0.9,-0.85) to[distance=0.55cm,out=-90,in=-90] (-0.3,-0.85);
	\draw[thick] 			(-0.3,-0.8) to[distance=0.55cm,out=90,in=90] (0.3,-0.8);
	\draw[thick] 			(0.3,-1.4) -- (0.3,-0.85);
\end{tikzpicture}\:=\quad
\begin{tikzpicture}[baseline=0.2cm,scale=0.8]
	\draw[thick] 			(-0.3,-0.4) -- (-0.3,0.35);
	\draw[thick] 			(-0.3,0.4) to[distance=0.55cm,out=90,in=90] (0.3,0.4);
	\draw[blue, thick] 		(0.3,-0.4) -- (0.3,0.35);
\end{tikzpicture}
    \caption{A diagrammatic proof of \eqref{eq:Tb-object-gamma}, with red $\rho^\O_{D_a}$ and blue $\rho^\O_{D_b}$.}
    \label{fig:diag-proof-Tb-object-gamma}
\end{figure}
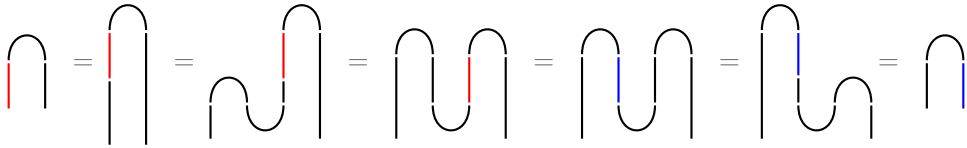

\begin{cor}
	For a $T^\b$-object $\O$ and any $A\in\SL$ we have
\begin{align}
	\b^\O\circ(\rho^\O_A\otm \mj) = \b^\O\circ(\mj\otm \rho^\O_{JA^{-1}J}) \label{eq:Tb-object-beta-jump}\\
	(\rho^\O_A\otm \mj)\circ\g^\O = (\mj\otm \rho^\O_{JA^{-1}J})\circ\g^\O.\label{eq:Tb-object-gamma-jump}
\end{align}
\end{cor}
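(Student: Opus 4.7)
The plan is to reduce the general statement for $A \in \SL$ to the generator case already handled in Lemma~\ref{lem:Tb-variation}, by induction on the length of a word expressing $A$ in terms of $D_a^{\pm 1}, D_b^{\pm 1}$. Proposition~\ref{prop:SL-presentation} guarantees such a word exists for every $A$, and the group homomorphism property of $\rho^\O$ lets us break $\rho^\O_A$ into a composite of $\rho^\O_{D_a^{\pm 1}}$ and $\rho^\O_{D_b^{\pm 1}}$.

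The first step is to observe that the assignment $\phi:\SL\to\SL$, $X\mapsto JX^{-1}J$ is an anti-homomorphism (since $J^2=E$), and that by \eqref{eq:J} it satisfies $\phi(D_a^{\pm 1})=D_b^{\pm 1}$ and $\phi(D_b^{\pm 1})=D_a^{\pm 1}$. In particular, the four generator cases of \eqref{eq:Tb-object-beta-jump} (with $A\in\{D_a^{\pm 1},D_b^{\pm 1}\}$) are precisely the four equalities provided by \eqref{eq:Tb-object} together with the variants established in Lemma~\ref{lem:Tb-variation}; likewise for \eqref{eq:Tb-object-gamma-jump}.

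For the inductive step on \eqref{eq:Tb-object-beta-jump}, write $A=X\cdot A'$ with $X\in\{D_a^{\pm 1},D_b^{\pm 1}\}$ and $A'$ of shorter word length. Then
\begin{align*}
	\b^\O\circ(\rho^\O_A\otm\mj)
	&= \b^\O\circ(\rho^\O_X\otm\mj)\circ(\rho^\O_{A'}\otm\mj)\\
	&= \b^\O\circ(\mj\otm\rho^\O_{\phi(X)})\circ(\rho^\O_{A'}\otm\mj)\\
	&= \b^\O\circ(\rho^\O_{A'}\otm\mj)\circ(\mj\otm\rho^\O_{\phi(X)})\\
	&= \b^\O\circ(\mj\otm\rho^\O_{\phi(A')})\circ(\mj\otm\rho^\O_{\phi(X)})\\
	&= \b^\O\circ(\mj\otm\rho^\O_{\phi(A')\phi(X)})
	= \b^\O\circ(\mj\otm\rho^\O_{\phi(A)}),
\end{align*}
using the base case in the second line, the interchange law in the third, the inductive hypothesis in the fourth, and the anti-homomorphism property of $\phi$ in the last step. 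The argument for \eqref{eq:Tb-object-gamma-jump} is entirely analogous, starting from \eqref{eq:Tb-object-gamma} instead of \eqref{eq:Tb-object}. There is no real obstacle here: once the anti-homomorphism identity $\phi(XA')=\phi(A')\phi(X)$ is in hand, the key content is just the observation that the generator-level identity from Lemma~\ref{lem:Tb-variation} propagates through the inductive step.
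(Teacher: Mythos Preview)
Your proof is correct and follows essentially the same approach as the paper: both reduce to the generator case via Proposition~\ref{prop:SL-presentation}, invoke \eqref{eq:Tb-object} and the variants from Lemma~\ref{lem:Tb-variation} as the base case, and use bifunctoriality of $\otm$ to propagate through a word in $D_a^{\pm1},D_b^{\pm1}$. Your explicit framing via the anti-homomorphism $\phi(X)=JX^{-1}J$ is a nice way to organise the induction, but it is exactly the content the paper compresses into one sentence.
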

\begin{proof}
	This follows from Lemma~\ref{lem:Tb-variation}, using the equality $(f\circ g)\otm \mj= (f\otm \mj)\circ (g\otm \mj)$, the property~\eqref{eq:Tb-object} and its variants, and the fact from Proposition~\ref{prop:SL-presentation} that $D_a$ and $D_b=JD_a^{-1}J$ generate $\SL$.
\end{proof}

The following will be a key fact later, and should be compared to Corollary~\ref{cor:Bun-conj}.
\begin{prop}
\label{prop:classify-new-proof}
	For a $T^\b$-object $\O$ and any $A,C\in\SL$ we have
\begin{equation}
\label{eq:Tb-object-key}
	\b^\O\circ(\rho^\O_{CA^{-1}C}\otm\mj)\circ\g^\O=\b^\O\circ(\rho^\O_A\otm \mj)\circ\g^\O = \b^\O\circ(\rho^\O_{JA^{-1}J}\otm\mj)\circ\g^\O.
\end{equation}
\end{prop}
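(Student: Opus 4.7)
The plan is to handle the two equalities in \eqref{eq:Tb-object-key} separately, because each one lifts to the abstract setting of $T^\b$-objects one of the two generating equivalences in the classification of torus bundles from Theorem~\ref{thm:torus-bundles} and Lemma~\ref{lem:conjugation}: conjugation on the left, and $J$-inversion on the right. I read the first matrix $CA^{-1}C$ as the conjugate $CAC^{-1}$, matching the relation $A\sim CAC^{-1}$; with the literal reading the identity already fails for $C=E$, which would force $\rho^\O_A$ and $\rho^\O_{A^{-1}}$ to produce the same $\b^\O$-$\g^\O$ pairing, contradicting Hatcher's classification. The main tools are the ``$\b$-jump'' axiom \eqref{eq:Tb-object-beta-jump} and its ``$\g$-jump'' counterpart \eqref{eq:Tb-object-gamma-jump}, the symmetry equations $\b^\O\circ\tau=\b^\O$ and $\tau\circ\g^\O=\g^\O$, and the fact that $\rho^\O:\SL\to\Aut(\O)$ is a group homomorphism.

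For the right equality $\b^\O\circ(\rho^\O_A\otm\mj)\circ\g^\O=\b^\O\circ(\rho^\O_{JA^{-1}J}\otm\mj)\circ\g^\O$, I would apply \eqref{eq:Tb-object-beta-jump} with matrix $JA^{-1}J$ in place of $A$; since $J(JA^{-1}J)^{-1}J=A$ (using $J^2=E$), this rewrites the right-hand side as $\b^\O\circ(\mj\otm\rho^\O_A)\circ\g^\O$. To match $\b^\O\circ(\rho^\O_A\otm\mj)\circ\g^\O$, I would insert $\tau$ at the bottom using $\tau\circ\g^\O=\g^\O$, slide it past $\mj\otm\rho^\O_A$ by naturality of the symmetry to obtain $\tau\circ(\rho^\O_A\otm\mj)$, and absorb the resulting $\tau$ at the top via $\b^\O\circ\tau=\b^\O$.

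For the left equality, I would first expand $\rho^\O_{CAC^{-1}}=\rho^\O_C\rho^\O_A\rho^\O_{C^{-1}}$ by the homomorphism property. Then I peel off the outer $\rho^\O_C$ using \eqref{eq:Tb-object-beta-jump}, turning it into $\rho^\O_{JC^{-1}J}$ on the opposite tensor factor and leaving $\b^\O\circ\bigl(\rho^\O_A\rho^\O_{C^{-1}}\otm\rho^\O_{JC^{-1}J}\bigr)\circ\g^\O$. Next I apply \eqref{eq:Tb-object-gamma-jump} in the reversed form $(\mj\otm\rho^\O_B)\circ\g^\O=(\rho^\O_{JB^{-1}J}\otm\mj)\circ\g^\O$ with $B=JC^{-1}J$; since $J(JC^{-1}J)^{-1}J=C$, this produces a new $\rho^\O_C$ on the first tensor factor, which combines with the existing $\rho^\O_{C^{-1}}$ via $\rho^\O_{C^{-1}}\rho^\O_C=\mj$. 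What remains is exactly $\b^\O\circ(\rho^\O_A\otm\mj)\circ\g^\O$.

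The main bookkeeping obstacle is tracking the $J$-conjugations through the jump identities; the identity $J^2=E$, equivalently $J(JX^{-1}J)^{-1}J=X$, is needed in both parts, and it is crucial that the $\rho^\O_C$ produced by the $\g$-jump cancels the $\rho^\O_{C^{-1}}$ from the homomorphism expansion, rather than doubling up. This fixes the correct order of applying \eqref{eq:Tb-object-beta-jump} vs.\ \eqref{eq:Tb-object-gamma-jump}.
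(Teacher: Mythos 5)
Your reading of $CA^{-1}C$ as the intended $CAC^{-1}$ is exactly what the paper's own proof establishes, and your argument is correct and essentially the same as the paper's: the $J$-inversion equality via the $\b$-jump identity~\eqref{eq:Tb-object-beta-jump} combined with the $\tau$-swap from \eqref{eq:tau-beta}--\eqref{eq:tau-gamma}, and the conjugation equality by cycling one conjugating factor around through the $\b$- and $\g$-jump identities until it cancels. The only difference is cosmetic: the paper peels the inner $\rho^\O_{C^{-1}}$ and sends it through $\g^\O$ then $\b^\O$, whereas you peel the outer $\rho^\O_C$ and go through $\b^\O$ then $\g^\O$ — a mirror image of the same computation.
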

\begin{proof}
Firstly, by the properties~\eqref{eq:tau-beta} and~\eqref{eq:tau-gamma} of the $T^\b$-object $\O$ in $\calC$ we have
\begin{align*}
	\b^\O\circ(\rho^\O_A\otm \mj)\circ\g^\O
	& = \b^\O\circ(\rho^\O_A\otm \mj)\circ\tau \circ\g^\O\\
    & = \b^\O\circ\tau\circ(\mj\otm \rho^\O_A)\circ\g^\O = \b^\O\circ(\mj\otm \rho^\O_A)\circ\g^\O.
\end{align*}
Using this and also the property~\eqref{eq:Tb-object-beta-jump} we deduce
\begin{align*}
	\b^\O\circ (\rho^\O_A\otm\mj)\circ\g^\O
		&=\b^\O\circ (\mj\otm\rho^\O_{JA^{-1}J})\circ\g^\O
		=\b^\O\circ (\rho^\O_{JA^{-1}J}\otm\mj)\circ\g^\O.
\end{align*}
Similarly, for $C\in\SL$ using~\eqref{eq:Tb-object-beta-jump} and~\eqref{eq:Tb-object-gamma-jump} we find
	\begin{align*}
	\b^\O\circ (\rho^\O_{CAC^{-1}}\otm\mj)\circ\g
		&=\b^\O\circ (\rho^\O_{CA}\otm\mj)\circ(\rho^\O_{C^{-1}}\otm\mj)\circ\g^\O\\
		&=\b^\O\circ (\rho^\O_{CA}\otm\mj)\circ(\mj\otm\rho^\O_{JCJ})\circ\g^\O\\
		&=\b^\O\circ (\mj\otm\rho^\O_{JCJ})\circ (\rho^\O_{CA}\otm\mj)\circ\g^\O\\
		&=\b^\O\circ (\rho^\O_{C^{-1}}\otm\mj)\circ (\rho^\O_{CA}\otm\mj)\circ\g^\O\\
		&=\b^\O\circ (\rho^\O_{C^{-1}CA}\otm\mj)\circ\g^\O\\
		&=\b^\O\circ (\rho^\O_A\otm\mj)\circ\g^\O.\qedhere
	\end{align*}
\end{proof}

%The naturality of symmetry implies the following result.
%\begin{lem}
%	For any $T^\b$-object $\O$ and any $f,g:\O\otm\O\to\O\otm\O$ we have
%	\begin{equation}
%	\label{eq:permuTbe-Bun}
%		(\b^\O\circ f\circ\g^\O)\otm (\b^\O\circ g\circ\g^\O)= (\b^\O\circ g\circ\g^\O)\otm (\b^\O\circ f\circ\g^\O).
%	\end{equation}
%\end{lem}

%%%%%%%%%%
\subsection{\texorpdfstring{$T^\be$}{Tbe}-objects}
\label{subsec:Tbe-objects}

\begin{dfn}
\label{defin:TbeObject}
Fix a symmetric, strict monoidal category $\calC$, with the unit object $\1$, the symmetry $\tau$, and the identity arrows $\mj$. A $T^\be$-\emph{object} in $\calC$ is an object $\O$ that is a $T^\b$-object as in Definition~\ref{dfn:Tb-object} with the following additional data.
\begin{itemize}
	\item There is a \emph{unit} $\unit^\O:\1\to \O$ and a \emph{counit} $\counit^\O:\O\to \1$ satisfying
\begin{equation}\label{eq:unit-counit}
	\counit^\O = \b^\O\circ (\mj\otm \unit^\O).
\end{equation}
	\item Moreover, for $D_a\in\SL$ from \eqref{eq:Da,Db} the following equality holds
\begin{equation}\label{eq:Da-e}
	\rho^\O_{D_a}\circ \unit^\O= \unit^\O.
\end{equation}
\end{itemize}
\end{dfn}

\begin{rem}\label{rem:beta-e,eta-gamma}
	It is straightforward to conclude that for every $T^\be$-object we have $\b^\O\circ (\unit^\O\otm \mj)= \counit^\O$ and $(\mj\otm \counit^\O)\circ\g^\O=\unit^\O=(\counit^\O\otm\mj)\circ\g^\O$.
\end{rem}

\begin{lem}
We have
\begin{equation}\label{eq:eta-Db}
	\counit^\O\circ\rho^\O_{D_b}=\counit^\O.
\end{equation}
\end{lem}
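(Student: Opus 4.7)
The plan is to unfold $\counit^\O$ using \eqref{eq:unit-counit}, commute $\rho^\O_{D_b}$ past $\unit^\O$ through the monoidal structure, apply the $\b$-jump identity to swap $\rho^\O_{D_b}$ on the left tensor factor for $\rho^\O_{D_a}$ on the right tensor factor, and finally absorb $\rho^\O_{D_a}$ into $\unit^\O$ using \eqref{eq:Da-e}.

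More precisely, I would start from
\[
	\counit^\O\circ\rho^\O_{D_b}
	= \b^\O\circ (\mj\otm \unit^\O)\circ \rho^\O_{D_b}
	= \b^\O\circ (\rho^\O_{D_b} \otm \unit^\O),
\]
where the first equality is \eqref{eq:unit-counit}, and the second uses the interchange law $(\rho^\O_{D_b}\otm \unit^\O)=(\mj\otm\unit^\O)\circ \rho^\O_{D_b}$, valid in any (strict) monoidal category since $\unit^\O$ has source $\1$. Next, I would factor
\[
	\b^\O\circ (\rho^\O_{D_b}\otm \unit^\O)
	= \b^\O\circ(\rho^\O_{D_b}\otm \mj)\circ (\mj\otm \unit^\O),
\]
and apply the variant of \eqref{eq:Tb-object} obtained from Lemma~\ref{lem:Tb-variation} by exchanging $D_a$ and $D_b$, namely $\b^\O\circ(\rho^\O_{D_b}\otm \mj)=\b^\O\circ(\mj\otm \rho^\O_{D_a})$, to get
\[
	\b^\O\circ(\mj\otm \rho^\O_{D_a})\circ (\mj\otm \unit^\O)
	= \b^\O\circ \big(\mj\otm (\rho^\O_{D_a}\circ \unit^\O)\big).
\]
Finally, the relation \eqref{eq:Da-e} gives $\rho^\O_{D_a}\circ \unit^\O=\unit^\O$, and applying \eqref{eq:unit-counit} once more yields
\[
	\b^\O\circ(\mj\otm \unit^\O) = \counit^\O,
\]
which completes the proof.

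This argument is essentially a dual of the derivation showing that $\rho^\O_{D_a}$ fixes $\unit^\O$ propagates to a statement about $\counit^\O$ and $\rho^\O_{D_b}$, and I do not anticipate any real obstacle: the only thing to be careful about is using the correct variant of \eqref{eq:Tb-object} from Lemma~\ref{lem:Tb-variation} (the one with $D_a$ and $D_b$ swapped), which is exactly what allows $\rho^\O_{D_b}$ on the left copy of $\O$ to reappear as $\rho^\O_{D_a}$ on the right copy where $\unit^\O$ can absorb it.
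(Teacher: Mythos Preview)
Your proof is correct and essentially identical to the paper's: both unfold $\counit^\O$ via \eqref{eq:unit-counit}, commute $\rho^\O_{D_b}$ into the left tensor slot, apply the swapped variant of \eqref{eq:Tb-object} to move it to $\rho^\O_{D_a}$ on the right, and absorb it with \eqref{eq:Da-e}. If anything, your citation of Lemma~\ref{lem:Tb-variation} for the swapped identity is slightly more precise than the paper's bare reference to \eqref{eq:Tb-object}.
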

\begin{proof}
	Using \eqref{eq:unit-counit}, ~\eqref{eq:Tb-object} and~\eqref{eq:Da-e} we have
		\begin{align*}
		\counit^\O\circ\rho^\O_{D_b} 
		=\b^\O\circ(\mj\otm\unit^\O) \circ\rho^\O_{D_b} 
		&= \b^\O\circ(\rho^\O_{D_b}\otm\mj)\circ(\mj\otm\unit^\O)\\
		&=\b^\O\circ(\mj\otm\rho^\O_{D_a})\circ(\mj\otm\unit^\O)\\
		&=\b^\O\circ(\mj\otm(\rho^\O_{D_a}\circ\unit^\O))\\
		&=\b^\O\circ(\mj\otm\unit^\O)=\counit^\O.\qedhere
		\end{align*}
\end{proof}

\begin{prop}\label{prop:abstr-lens-space}
	For a $T^\be$-object $\O$, if $A,A'\in\SL$ are lens-inseparable then
\begin{equation}
	\counit^\O\circ\rho^\O_A\circ \unit^\O=\counit^\O\circ\rho^\O_{A'}\circ \unit^\O.
\end{equation}
\end{prop}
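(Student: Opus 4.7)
\medskip

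\noindent\textbf{Plan.} Lens-inseparability is generated by the two kinds of 1-step moves, so by induction on the length of a connecting sequence of 1-step lens-inseparable matrices, it suffices to prove the equality when $A$ and $A'$ are themselves 1-step lens-inseparable. There are two cases to dispatch, corresponding to the two generating moves.

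\smallskip

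\noindent\emph{Case 1: $A' = D_b^{-n}AD_a^k$.} Since $\rho^\O$ is a group homomorphism, we have $\rho^\O_{A'} = \rho^\O_{D_b}{}^{-n}\circ \rho^\O_A\circ \rho^\O_{D_a}{}^{k}$. Iterating \eqref{eq:Da-e} in both directions (using that $\rho^\O_{D_a^{-1}}$ is the inverse of $\rho^\O_{D_a}$) gives $\rho^\O_{D_a}{}^{k}\circ \unit^\O=\unit^\O$ for every $k\in\Z$, and similarly iterating \eqref{eq:eta-Db} gives $\counit^\O\circ\rho^\O_{D_b}{}^{-n}=\counit^\O$ for every $n\in\Z$. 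Composing these yields $\counit^\O\circ \rho^\O_{A'}\circ\unit^\O = \counit^\O\circ\rho^\O_A\circ\unit^\O$.

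\smallskip

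\noindent\emph{Case 2: $A' = JA^{-1}J$.} The idea is to express $\counit^\O\circ\rho^\O_A\circ\unit^\O$ in a ``doubled'' form in which Proposition~\ref{prop:classify-new-proof} (specifically, the $\b$-jump identity~\eqref{eq:Tb-object-beta-jump}) can be applied. Starting from $\counit^\O=\b^\O\circ(\mj\otm\unit^\O)$ from \eqref{eq:unit-counit}, and using the standard interchange $(\mj\otm\unit^\O)\circ\rho^\O_A = (\rho^\O_A\otm\mj)\circ(\mj\otm\unit^\O)$, which holds by functoriality of $\otm$ in any strict monoidal category (identifying $\O=\O\otm\1$), we compute
\begin{equation*}
	\counit^\O\circ\rho^\O_A\circ\unit^\O
	= \b^\O\circ(\rho^\O_A\otm\mj)\circ(\unit^\O\otm\unit^\O),
\end{equation*}
where we also used $(\mj\otm\unit^\O)\circ\unit^\O=\unit^\O\otm\unit^\O$. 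Now apply \eqref{eq:Tb-object-beta-jump} to rewrite $\b^\O\circ(\rho^\O_A\otm\mj)=\b^\O\circ(\mj\otm\rho^\O_{JA^{-1}J})$, and then use the symmetry of $\b^\O$ (equation \eqref{eq:tau-beta}) together with the naturality of $\tau$ to move $\rho^\O_{JA^{-1}J}$ back to the first tensor factor:
\begin{equation*}
	\b^\O\circ(\mj\otm\rho^\O_{JA^{-1}J})\circ(\unit^\O\otm\unit^\O)
	= \b^\O\circ\tau\circ(\rho^\O_{JA^{-1}J}\otm\mj)\circ(\unit^\O\otm\unit^\O)
	= \b^\O\circ(\rho^\O_{JA^{-1}J}\otm\mj)\circ(\unit^\O\otm\unit^\O).
\end{equation*}
Reversing the first computation (with $A$ replaced by $JA^{-1}J$) produces $\counit^\O\circ\rho^\O_{JA^{-1}J}\circ\unit^\O$, finishing this case.

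\smallskip

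\noindent The main (and really only) obstacle is the second case, and it is purely a matter of bookkeeping with the interchange law, the snake/symmetry identities, and the $\b$-jump identity; no new input beyond what is already established for $T^\b$-objects is required, which mirrors topologically the role played by $J$-conjugation in the classification of lens spaces in Lemma~\ref{lem:insep-cond} and Theorem~\ref{thm:lens-spaces}.
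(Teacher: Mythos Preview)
Your proof is correct and follows essentially the same approach as the paper: reduce to 1-step lens-inseparable matrices, handle the $D_b^{-n}AD_a^k$ case via \eqref{eq:Da-e} and \eqref{eq:eta-Db}, and handle the $JA^{-1}J$ case by expanding $\counit^\O$ via \eqref{eq:unit-counit}, applying the $\b$-jump identity \eqref{eq:Tb-object-beta-jump}, and collapsing back. The only cosmetic difference is that the paper uses the identity $\b^\O\circ(\unit^\O\otm\mj)=\counit^\O$ from Remark~\ref{rem:beta-e,eta-gamma} to finish, whereas you instead invoke the symmetry of $\b^\O$ and naturality of $\tau$ (together with $\tau\circ(\unit^\O\otm\unit^\O)=\unit^\O\otm\unit^\O$) to swap tensor factors before reversing the first computation; these amount to the same thing.
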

\begin{proof}
	It is enough to prove that the equality holds for 1-step inseparable matrices. If $A'=D_b^nAD_a^k$ it does hold, thanks to~\eqref{eq:Da-e} and~\eqref{eq:eta-Db}. If $A'=JA^{-1}J$ then
\begin{equation*}
\begin{split}
	\counit^\O\circ\rho^\O_{JA^{-1}J}\circ \unit^\O
	& = \b^\O\circ(\mj\otm \unit^\O)\circ \rho^\O_{JA^{-1}J}\circ\unit^\O
\\
	&  = \b^\O\circ(\rho^\O_{JA^{-1}J}\otm \mj)\circ(\unit^\O\otm \unit^\O)
\\
	&  = \b^\O\circ(\mj\otm \rho^\O_A)\circ(\unit^\O\otm \unit^\O)
\\
	&   = \b^\O\circ(\unit^\O\otm \mj) \circ \rho^\O_A\circ\unit^\O
\\
	&  = \counit^\O\circ \rho^\O_A \circ\unit^\O,
\end{split}
\end{equation*}
	using \eqref{eq:unit-counit}, the property~\eqref{eq:Tb-object-beta-jump} of $T^\b$-objects, and Remark~\ref{rem:beta-e,eta-gamma}.
\end{proof}

%\begin{dfn}
% An \emph{abstract lens space} in $\calC$ with respect to a $T^\be$-object is a arrow in $\calC$ given as a composite $\counit\circ\rho^\O_A\circ \unit$ for some $A\in\SL$. A \emph{concrete lens space} is an abstract lens space in the category $\beMCG$.
%\end{dfn}

\subsection{Universality}
\label{subsec:universality}

\begin{lem}\label{lem:T-Tb}
\begin{enumerate}
	\item The torus $T\in\bMCG$ is a $T^\b$-object.
	\item The torus $T\in\beMCG$ is a $T^\be$-object.
\end{enumerate}
\end{lem}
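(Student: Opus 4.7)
The plan is to verify the axioms of Definitions~\ref{dfn:Tb-object} and~\ref{defin:TbeObject} by assembling the pieces already established in Section~\ref{sec:thick-tori}. For part (i) I would assign the action, pairing, and copairing by
\[
\rho^T_A \coloneqq \Cyl_A, \qquad \b^T \coloneqq \b, \qquad \g^T \coloneqq \g,
\]
as defined in \eqref{eq:Id}--\eqref{eq:gamma} and \eqref{eq:Cyl}. For part (ii) I would additionally set $\unit^T \coloneqq \unit$ and $\counit^T \coloneqq \counit$, as in \eqref{eq:unit} and \eqref{eq:counit}.

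First, for (i), I would check the three groups of axioms in order. The group homomorphism $\rho^T\colon\SL\to\Aut(T)$ is exactly the content of Corollary~\ref{cor:Cyl}. The symmetry conditions \eqref{eq:tau-beta} and \eqref{eq:tau-gamma} are precisely Lemma~\ref{lem:tau}, and the snake identities \eqref{eq:snake} are Lemma~\ref{lem:snake}. The only non-trivial axiom is then \eqref{eq:Tb-object}, the equation
\[
\b\circ(\Cyl_{D_a}\otm\mj)=\b\circ(\mj\otm\Cyl_{D_b}).
\]
To obtain this, I would apply the $\b$-jump Identity (Lemma~\ref{lem:beta-jump}) with $A=D_a$ and $B=E$: using $\Cyl_E=\mj$ and $JE^{-1}J=J^2=E$, it gives $\b\circ(\Cyl_{D_a}\otm\mj)=\b\circ(\mj\otm\Cyl_{JD_a^{-1}J})$. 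Combining this with the identity $JD_a^{-1}J=D_b$ from \eqref{eq:J} yields the desired equation.

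For part (ii) I would simply read off the two remaining axioms from Section~\ref{sec:solid-tori}. The compatibility \eqref{eq:unit-counit}, namely $\counit=\b\circ(\mj\otm\unit)$, is Lemma~\ref{lem:e-eta}, and the equality \eqref{eq:Da-e}, namely $\Cyl_{D_a}\circ\unit=\unit$, is Lemma~\ref{lem:Da-e}.

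There is no genuine obstacle here: the role of the lemma is organisational, gathering the scattered properties of $\b,\g,\Cyl_A,\unit,\counit$ into the abstract packaging of a $T^\b$- or $T^\be$-object. The most delicate ingredient was built earlier, in the proof of the $\b$-jump Identity via the orientation preserving homeomorphism $JB^{-1}J\tm id_I$ of $T\tm I$ (see \eqref{eq:beta-homeo}); all other axioms reduce to direct invocations of the Self-Gluing, g-Collar, or snake lemmas.
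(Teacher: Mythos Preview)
Your proof is correct and follows essentially the same route as the paper's own argument: you assign the same data and invoke Lemmas~\ref{lem:tau}, \ref{lem:snake}, \ref{lem:beta-jump} (with $A=D_a$, $B=E$) together with $JD_a^{-1}J=D_b$ for part (i), and Lemmas~\ref{lem:e-eta} and~\ref{lem:Da-e} for part (ii). The only addition is your explicit citation of Corollary~\ref{cor:Cyl} for the homomorphism property of $\rho^T$, which the paper leaves implicit (Lemma~\ref{lem:cylinders} already suffices for this).
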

\begin{proof}
	For $\O=T\in\bMCG$ we set $\rho^\O_A\coloneqq\Cyl_A$, $\b^\O\coloneqq\b$, $\g^\O\coloneqq\g$, as in \eqref{eq:beta} -- \eqref{eq:Cyl}. Then \eqref{eq:tau-beta} and \eqref{eq:tau-gamma} hold by Lemma~\ref{lem:tau}, and \eqref{eq:snake} by Lemma~\ref{lem:snake}. We have
\[
	\b\circ (\Cyl_{D_a}\otm\mj)=\b\circ (\mj\otm\Cyl_{JD_a^{-1}J}),
\]
	by Lemma~\ref{lem:beta-jump}, and since $JD_a^{-1}J=D_b$ by \eqref{eq:J}, the desired property \eqref{eq:Tb-object} holds.

	For the second claim, note that $T\in\beMCG$ is a $T^\b$-object. It remains to verify \eqref{eq:unit-counit} and \eqref{eq:Da-e}, which hold by Lemma~\ref{lem:e-eta} and Lemma~\ref{lem:Da-e} respectively.
\end{proof}

The main result about $T^\b$- and $T^\be$-objects is the following. Let $\calC^\b$ (resp.\ $\calC^\be$) be the symmetric strict monoidal category with a $T^\b$-object $\O$ (resp.\ $T^\be$-object), freely generated by the empty set.
\begin{thm}\label{thm:freely}
	There are isomorphisms
	\[
		F^\b:\calC^\b\ra \bMCG, \quad F^\be:\calC^\be\ra \beMCG,
	\]
	that preserve the structures of symmetric strict monoidal categories with $T^\b$- and $T^\be$-objects respectively.
	
	Moreover, any arrow in $\bMCG$ is up to symmetries represented as the tensor product of arrows of the form (with $A\in\SL$ varying):
\[
	C_1\coloneqq\beta^\O\circ (\rho^\O_A\otm\mj), \quad
	C_2\coloneqq\rho^\O_A, \quad
	C_3\coloneqq(\rho^\O_A\otm\mj)\circ \gamma^\O, \quad 
	C_4\coloneqq\beta^\O\circ (\rho^\O_A\otm\mj)\circ \gamma^\O.
\]
	For $\beMCG$ we additionally have
\[
	C_0\coloneqq\counit^\O\circ\rho_A^\O\circ\unit^\O, \qquad C_5\coloneqq\rho_A^\O\circ\unit^\O, \qquad C_6\counit^\O\circ\rho_A^\O.
\]
\end{thm}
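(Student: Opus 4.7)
The approach is to construct the structure-preserving functors $F^\b, F^\be$ via the universal property of $\calC^\b$ and $\calC^\be$, then prove the normal-form statement (thereby obtaining fullness), and finally leverage the classification theorems already collected in the preliminaries to establish faithfulness.

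For existence, Lemma~\ref{lem:T-Tb} tells us that $T\in\bMCG$ is a $T^\b$-object (resp.\ $T\in\beMCG$ is a $T^\be$-object) with $\rho^T_A=\Cyl_A$, $\b^T=\b$, $\g^T=\g$, $\unit^T=\unit$. Freeness of $\calC^\b$ and $\calC^\be$ then gives unique structure-preserving symmetric strict monoidal functors $F^\b$ and $F^\be$ into $\bMCG$ and $\beMCG$, which are automatically bijective on objects since the objects of all four categories are finite sequences of the unique generator.

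For the normal form, I would observe that the underlying 3-manifold of any arrow in $\bMCG$ is obtained by gluing copies of $T\tm I$ along boundary tori, so the dual graph (vertices = boundary tori, edges = pieces $T\tm I$) has every vertex of valence at most two. Each connected component is therefore either a chain (giving another thick torus, up to boundary parametrization) or a cycle (giving a torus bundle, by Proposition~\ref{prop:Bun}). For $\beMCG$, solid tori additionally cap off boundary tori, yielding either a solid torus or, when two caps are present, a lens space by Lemma~\ref{lem:cob-is-lens-space}. Lemma~\ref{lem:cob-supp-by-TxI} identifies every thick-torus component with one of $C_1, C_2, C_3$ for a suitable matrix, and a similar direct argument covers the solid-torus and lens-space components, giving $C_5, C_6, C_0$; torus bundles give $C_4$. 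The symmetry $\tau$ is then used to rearrange these tensor factors arbitrarily, yielding the announced normal form.

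For faithfulness, I would define the candidate inverse by assigning to a normal form $C_{i_1}\otm\cdots\otm C_{i_k}$ (with matrices $A_j$) the corresponding expression in $\calC^\b$ or $\calC^\be$, and check that every ambiguity in the topological choice of matrices is absorbed by the axioms. The ambiguities and their algebraic counterparts are: cylinders by Corollary~\ref{cor:Cyl} matched to injectivity of $\rho^\O$, which holds abstractly because $\calC^\b$ imposes exactly the relations of the presentation in Proposition~\ref{prop:SL-presentation}; $C_1$ and $C_3$ carry no ambiguity by Corollary~\ref{beta-gamma-cyl}; conjugation and $A\leftrightarrow JA^{-1}J$ for $C_4$ (Theorem~\ref{thm:torus-bundles}) is matched to Proposition~\ref{prop:classify-new-proof}; lens-inseparability for $C_0$ (Theorem~\ref{thm:lens-spaces}) is matched to Proposition~\ref{prop:abstr-lens-space}; and the right multiplication by $D_a^m$ (resp.\ left by $D_b^m$) for $C_5$ (resp.\ $C_6$), from Lemma~\ref{cyl-epsilon}, is absorbed by the axioms $\rho^\O_{D_a}\circ\unit^\O=\unit^\O$ and $\counit^\O\circ\rho^\O_{D_b}=\counit^\O$. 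I expect the main obstacle to be the coherent orchestration of these checks: one must argue that any two generator-level expressions of the same arrow in $\bMCG$ can be transformed into one another by a sequence of \emph{local} moves (each either a purely symmetric-monoidal manipulation or one of the matrix-level ambiguities listed above), so that the $T^\b$-axioms suffice and no deeper topological equivalence is overlooked. This is essentially a coherence statement bridging the geometric decomposition into components and the abstract freeness of $\calC^\b$, $\calC^\be$.
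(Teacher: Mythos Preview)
Your ingredients are correct, and your ambiguity-matching list is exactly what the paper uses (its Lemma~\ref{1component} is that table). But the step you flag as ``the main obstacle'' is not a bookkeeping issue; it is the entire content of the faithfulness proof, and your outline does not supply it.

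You establish the normal form in the \emph{target} $\bMCG$ topologically and then attempt to lift via a map $G$ back to $\calC^\be$. Checking that $G$ is well-defined on arrows is not enough: to conclude $G\circ F^\be=\mathrm{id}$ from freeness you need $G$ to be a \emph{functor}, and functoriality of $G$ amounts to showing that the composite in $\calC^\be$ of two lifted normal forms can itself be reduced to normal form using only the $T^\be$-axioms. Equivalently, your ``coherence statement'' asks that any two generator words for the same cobordism are connected by moves valid in $\calC^\be$; but that is precisely faithfulness of $F^\be$, so the argument is circular unless you provide an independent reduction procedure.

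The paper closes this gap by reversing the logic. It proves first, by a purely algebraic induction on word length (Proposition~\ref{nf}), that every arrow of the \emph{source} category $\calC^\be$ already has a normal form $\tau^t\circ C\circ\tau^s$. The nontrivial inductive case is absorbing a freshly composed $\b^\O$ into an existing normal form, handled with the snake identity \eqref{eq:snake} and the relation \eqref{eq:Tb-object-gamma}; no topology enters. Faithfulness then follows directly: given $F^\be(f_1)=F^\be(f_2)$, put each $f_i$ in normal form \emph{inside $\calC^\be$}, biject the tensor factors with the connected components of the common image cobordism, and apply your matchings component by component (Lemma~\ref{1component}). No inverse functor is ever built, and the topological normal form you prove for $\bMCG$ becomes a corollary (Corollaries~\ref{cor:support-b} and~\ref{cor:support-be}) rather than an input.
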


The proof is not hard but is lengthy and is deferred to Appendix~\ref{sec:appendix}.
\begin{cor}\label{thm:Tb-object}
\hfill
\begin{enumerate}
\item
	The torus $T\in\bMCG$ is the universal $T^\b$-object. That is, for any $T^\b$-object $\O\in\calC$ there is a unique symmetric monoidal functor $F:\bMCG \to \calC$, with
	\[
		F(T)=\O,\quad F(\Cyl_A)=\rho^\O_A,\quad F(\b)=\b^\O,\quad F(\g)=\g^\O.
	\]	
\item
	The torus $T\in\beMCG$ is the universal $T^\be$-object. That is, for a $T^\be$-object $\O\in\calC$ there is a unique symmetric monoidal functor $F:\beMCG\, \to \calC$, with the same conditions as in (i) together with
	\[
		F(\unit)=\unit^\O.
	\]
\end{enumerate}
\end{cor}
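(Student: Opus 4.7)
The plan is to deduce both parts as a direct formal consequence of Theorem~\ref{thm:freely} combined with Lemma~\ref{lem:T-Tb}, by unpacking the universal property that is implicit in the definitions of the free categories $\calC^\b$ and $\calC^\be$. By construction, $\calC^\b$ is \emph{freely generated by the empty set} as a symmetric strict monoidal category with a $T^\b$-object; I would interpret this as saying that, for any symmetric strict monoidal category $\calC$ equipped with a $T^\b$-object $\O$, there is a unique symmetric strict monoidal functor $G : \calC^\b \to \calC$ that sends the generating $T^\b$-object with its data $\rho, \b, \g$ to $\O$ with its data $\rho^\O, \b^\O, \g^\O$.

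For part (i), first I would invoke Lemma~\ref{lem:T-Tb}(i) to note that $T \in \bMCG$ is itself a $T^\b$-object with $\rho^T_A = \Cyl_A$, $\b^T = \b$, $\g^T = \g$. Since Theorem~\ref{thm:freely} provides an isomorphism $F^\b : \calC^\b \to \bMCG$ of symmetric strict monoidal categories with $T^\b$-objects, this isomorphism identifies the generating $T^\b$-object of $\calC^\b$ with $(T, \Cyl, \b, \g) \in \bMCG$. I would then define
\[
F \coloneqq G \circ (F^\b)^{-1} : \bMCG \to \calC.
\]
By construction $F$ is a symmetric monoidal functor satisfying $F(T) = \O$, $F(\Cyl_A) = \rho^\O_A$, $F(\b) = \b^\O$, and $F(\g) = \g^\O$. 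For uniqueness, any other functor $F' : \bMCG \to \calC$ satisfying these conditions produces a symmetric strict monoidal functor $F' \circ F^\b : \calC^\b \to \calC$ that sends the generating $T^\b$-structure to $\O$, and so must coincide with $G$ by the universal property of $\calC^\b$; hence $F' = F$.

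Part (ii) is handled in exactly the same way, using Lemma~\ref{lem:T-Tb}(ii) and the isomorphism $F^\be : \calC^\be \to \beMCG$ provided by Theorem~\ref{thm:freely}, together with the additional clause $F(\unit) = \unit^\O$ that is built into the notion of a $T^\be$-object. The main obstacle, which has already been overcome in Theorem~\ref{thm:freely} (and which is why that theorem is delegated to the appendix), is to give an explicit presentation of $\bMCG$ and $\beMCG$ by the generators $\mj, \b, \g, \Cyl_A$ (and $\unit$) subject precisely to the relations axiomatising $T^\b$- and $T^\be$-objects. The present corollary itself requires no further calculation: it is just the passage from ``freely generated'' to the associated mapping property, followed by transport along $F^\b$ and $F^\be$.
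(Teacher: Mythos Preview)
Your proposal is correct and is exactly the argument the paper has in mind: the corollary is stated immediately after Theorem~\ref{thm:freely} with no separate proof, precisely because it follows by transporting the defining universal property of the free categories $\calC^\b$, $\calC^\be$ along the isomorphisms $F^\b$, $F^\be$, as you spell out.
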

%\begin{proof}
%	We argue in the first case, while the second is similar. The functor $F$ is determined by the listed values on generators, by extending with respect to $\circ, \otm, \tau$. It remains to show that it is well defined. In other words, we need to show the following: if an arrow of $\bMCG$ is written in two different ways in terms of the generators, then this equality holds in $\calC$, that is, it is implied by the given axioms \eqref{eq:tau-beta} -- \eqref{eq:Tb-object}. This follows from Theorem~\ref{thm:freely}.
%\end{proof}

We also record the following useful consequences of Theorem~\ref{thm:freely}.
\begin{cor}\label{cor:support-b}
	The arrows in the category $\bMCG$ are precisely those cobordisms in $\Cob$ that are supported by 3-manifolds that are disjoint unions of $T\tm I$ and torus bundles over $S^1$.
\end{cor}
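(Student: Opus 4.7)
The plan is to prove the two inclusions separately, using Theorem~\ref{thm:freely} for one direction and Lemma~\ref{lem:cob-supp-by-TxI} together with Proposition~\ref{prop:Bun} for the other.

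For the forward inclusion, I would invoke the second part of Theorem~\ref{thm:freely}, which expresses every arrow of $\bMCG$ as a tensor product (up to symmetries $\tau$) of atomic arrows $C_1,C_2,C_3,C_4$. Since tensor products in $\Cob$ correspond to disjoint unions of the underlying 3-manifolds, and since the symmetries $\tau$ are supported by copies of $T\tm I$, it suffices to check the support of each atom: the arrows $C_1,C_2,C_3$ are by definition supported by a single $T\tm I$, whereas $C_4=\beta\circ(\Cyl_A\otm\mj)\circ\gamma$ is supported by the torus bundle $Bun_A$ thanks to Proposition~\ref{prop:Bun}. This gives the desired description.

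For the reverse inclusion, I would start with a cobordism $C=(\Sigma^t\hookrightarrow M\hookleftarrow\Sigma^s)$ whose underlying 3-manifold $M$ is a disjoint union $M_1\sqcup\cdots\sqcup M_k$, each $M_i$ being either $T\tm I$ or a torus bundle. The key observation is that the boundary parametrizations of $C$ restrict to parametrizations on the $\partial M_i$, so $C$ splits (up to precomposition and postcomposition by a product of symmetries $\tau$) as a tensor product $C\cong \sigma\circ(C_1\otm\cdots\otm C_k)\circ\sigma'$, where $C_i$ is a cobordism whose underlying manifold is $M_i$. It then suffices to show each factor $C_i$ lies in $\bMCG$. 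If $M_i\cong T\tm I$, Lemma~\ref{lem:cob-supp-by-TxI} writes $C_i$ as a composite of $\beta,\gamma,\Cyl_A$, hence $C_i\in\bMCG$. If $M_i$ is a torus bundle, Lemma~\ref{lem:any-BunA} identifies it with $Bun_A$ for some $A\in\SL$, and Proposition~\ref{prop:Bun} then identifies $C_i$ with $\beta\circ(\Cyl_A\otm\mj)\circ\gamma$ (up to the choice of $A$ in its conjugacy class from Theorem~\ref{thm:torus-bundles}, which does not affect membership in $\bMCG$).

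The main (and only mildly nontrivial) step will be the splitting-off argument: verifying that when $M$ is a disjoint union, the boundary parametrizations of $C$ genuinely factor through the boundaries of the pieces, so that one can identify $C$ with a tensor product of sub-cobordisms modulo reshuffling by $\tau$'s. This is essentially bookkeeping about how the components of $\Sigma^s$ and $\Sigma^t$ are distributed among the $M_i$, and it reduces to the fact that any bijection between finite ordered sequences of tori can be realised by a composition of swaps, which in $\bMCG$ is generated by $\tau$.
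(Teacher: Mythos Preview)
Your proposal is correct and follows essentially the same approach as the paper: Theorem~\ref{thm:freely} for the forward inclusion, and Lemma~\ref{lem:cob-supp-by-TxI} together with Proposition~\ref{prop:Bun} (and Lemma~\ref{lem:any-BunA}) for the reverse. You are slightly more explicit than the paper about the bookkeeping needed to split a disconnected cobordism into a $\tau$-conjugated tensor product of connected pieces, which the paper leaves implicit; this extra care is harmless and indeed clarifying.
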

\begin{proof}
	% The categories $\bMCG$ and $\bMCG^\star$  have the same objects: in the former they are sequences of $\torus$, and in the latter they are sequences of equivalences classes of the surface homeomorphic to $\torus$ (see the opening paragraph of the paper).
	% Thus, we need only show that any arrow in $\bMCG^\star$ is represented as a composite of arrows of $\bMCG$. For this
	In one direction, each cobordism supported either by the 3-manifold $T\tm I$ or a $T$-bundle over $S^1$ is in $\bMCG$. The former case is the content of Lemma~\ref{lem:cob-supp-by-TxI}, whereas the latter is covered by Proposition~\ref{prop:Bun}.
	
	In the other direction, by Theorem~\ref{thm:freely} any arrow in $\bMCG$ is up to symmetries a tensor product of the arrows $C_1$ -- $C_4$. Since the first three are of the shape $T\tm I$ and the last one is a $T$-bundle, we conclude that any arrow is of desired form.
\end{proof}
	
\begin{cor}\label{cor:support-be}
		The arrows in the category $\beMCG$ are precisely those cobordisms in $\Cob$ that are supported by 3-manifolds that are disjoint unions of $T\tm I$, $D^2\tm S^1$, torus bundles over $S^1$, and lens spaces.
\end{cor}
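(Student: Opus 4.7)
The plan is to mirror the proof of Corollary~\ref{cor:support-b}, combining Theorem~\ref{thm:freely} with the concrete descriptions of support manifolds built up in Sections~\ref{subsec:torus-bundles} and~\ref{sec:solid-tori}. Both directions reduce to connected components, since a cobordism whose underlying 3-manifold is a disjoint union is the monoidal product in $\Cob$ of the component cobordisms.

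For the direction ``supported $\Rightarrow$ in $\beMCG$'', cobordisms supported by $T\tm I$ lie in $\bMCG\subset\beMCG$ by Lemma~\ref{lem:cob-supp-by-TxI}; torus bundles are realised as $\b\circ(\Cyl_A\otm\mj)\circ\g$ by Proposition~\ref{prop:Bun}; and lens spaces appear as $\counit\circ\Cyl_A\circ\unit$ by Lemma~\ref{lem:cob-is-lens-space}. The only case that is not already covered by the proof of Corollary~\ref{cor:support-b} is a cobordism supported by $D^2\tm S^1$. Since this manifold has a single toroidal boundary component, such a cobordism must have the shape $(T\xra{f} D^2\tm S^1\hla \emptyset)$ or the time-reversed version. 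Proposition~\ref{prop:MCG} lets me write $\iota^{-1}\circ f$ up to isotopy as some $B\in\SL$, and the g-Collar Lemma~\ref{lem:collar} lets me absorb this boundary identification into a cylinder, identifying the cobordism with $\Cyl_{B^{-1}}\circ\unit$ (respectively $\counit\circ\Cyl_B$), which is manifestly in $\beMCG$.

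For the direction ``in $\beMCG$ $\Rightarrow$ supported'', I invoke the normal form of Theorem~\ref{thm:freely}: every arrow in $\beMCG$ is, up to symmetries, a tensor product of arrows in $\{C_0,C_1,\dots,C_6\}$. Since $\otm$ corresponds to disjoint union of underlying manifolds, it suffices to examine each $C_i$ individually. By direct inspection $C_1,C_2,C_3$ are supported by $T\tm I$; $C_4$ by a torus bundle (Proposition~\ref{prop:Bun}); $C_5=\Cyl_A\circ\unit$ and $C_6=\counit\circ\Cyl_A$ by $D^2\tm S^1$ (applying Lemma~\ref{lem:collar} to absorb the cylinder into the solid torus, exactly as in the first direction); and $C_0=\counit\circ\Cyl_A\circ\unit$ by a lens space (Lemma~\ref{lem:cob-is-lens-space}).

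The main obstacle is the solid-torus case of the first direction, where I must turn the $\SL$-ambiguity of the boundary identification into a genuine cobordism equivalence rather than a bare homeomorphism of underlying 3-manifolds; all the required tools are already in place, and the remaining work is routine bookkeeping using previously established lemmas.
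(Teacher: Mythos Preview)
Your proposal is correct and follows essentially the same route as the paper's proof, reducing the first direction to the solid-torus case via Corollary~\ref{cor:support-b} and handling the second direction by inspecting the normal-form factors $C_0$--$C_6$ from Theorem~\ref{thm:freely}. One small point of care: in the ``time-reversed'' solid-torus case the boundary identification $f$ is orientation reversing, so $\iota^{-1}\circ f\in\GL\setminus\SL$ rather than $\SL$; the paper fixes this by writing $f=\iota\circ A'$ with $A'\in\GL\setminus\SL$ and using $JA'\in\SL$, and your argument goes through with the same adjustment.
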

\begin{proof}
	% The categories $\bMCG$ and $\bMCG^\star$  have the same objects: in the former they are sequences of $\torus$, and in the latter they are sequences of equivalences classes of the surface homeomorphic to $\torus$ (see the opening paragraph of the paper).
	% Thus, we need only show that any arrow in $\bMCG^\star$ is represented as a composite of arrows of $\bMCG$. For this
	In one direction, we need to show that each of the listed 3-manifolds is in $\beMCG$. Using Corollary~\ref{cor:support-b}, it suffices to consider the case $D^2\tm S^1$. Indeed, this supports cobordisms either of the shape $\iota\circ A: T\hra D^2\tm S^1\hla\emptyset$ for $A\in\SL$, which is equal to $\Cyl_{A^{-1}}\circ\unit$, or $\emptyset\hra D^2\tm S^1\hla T:\iota\circ A'$ for $A'\in\GL\setminus\SL$, which is equal to $\counit\circ\Cyl_{JA'}$ (see Corollaries~\ref{cor:collar1} and~\ref{cor:collar2}).
	
	In the other direction, by Theorem~\ref{thm:freely} any arrow in $\beMCG$ is up to symmetries the tensor product of the arrows of the type $C_0$ -- $C_6$. We have that $C_0$ is a lens space, each of $C_1$, $C_2$, $C_3$ is supported by $T\tm I$, while $C_4$ is a torus bundle, and $C_5$, $C_6$ are supported by $D^2\tm S^1$. Thus, any arrow is of desired form.
\end{proof}

%%%%%%%%
\section{Some restricted (2+1) TQFTs from low-dimensional \texorpdfstring{$SL(2,\Z)$}{SL(2,Z)} representations}
\label{sec:tqft}

In this section we construct restricted TQFTs that distinguish some pairs of torus bundles and some interesting lens spaces. More precisely, we construct symmetric monoidal functors $\bMCG\to\Vec_{\KK}$ and $\beMCG\to\Vec_{\KK}$ that give different values on the mentioned 3-manifolds. Here $\Vec_{\KK}$ is the skeleton of the category of finite dimensional vector spaces over $\KK=\RR$ or $\CC$: objects are $\KK^n$ for $n\geq1$, and arrows are matrices with respect to fixed bases. We use the following reformulation of Corollary~\ref{thm:Tb-object}.

\begin{cor}\label{cor:TQFT}\hfill
	\begin{itemize}
		\item A restricted TQFT $F:\bMCG\to\Vec_{\KK}$ is completely described by the data:
	\begin{align*}
		& n_F\coloneqq\dim(F(T))\in\Z_{\geq1},\\
		& \rho^F_a\coloneqq F(\Cyl_{D_a})\in GL(n_F,\KK),\;\rho^F_b\coloneqq F(\Cyl_{D_b})\in GL(n_F,\KK),\\
		& \b^F\coloneqq F(\b)\in \Mat(2n_F \tm 1,\KK),\;\g^F\coloneqq F(\g)\in \Mat(1\tm 2n_F,\KK),
	\end{align*}
	that satisfy:
	\begin{align}
		\rho^F_a\rho^F_b\rho^F_a&=\rho^F_b\rho^F_a\rho^F_b,\label{eq-rep1}\\
		(\rho^F_a\rho^F_b)^6&=1,\label{eq-rep2}\\
		\b^F\circ\tau&=\b^F,\label{eq-b} \\
		\tau\circ\g^F&=\g^F,\label{eq-g} \\
		(\b^F\otm E_{n_F})\circ(E_{n_F}\otm\g^F)&=E_{n_F}=(E_{n_F}\otm\beta^F)\circ(\g^F\otm E_{n_F}),\label{eq-bg}\\
		\b^F\circ(\rho^F_a\otm E_{n_F}) &= \b^F\circ(E_{n_F}\otm \rho^F_b).\label{eq-rep-b}
	\end{align}
	for the identity matrix $E_{n_F}\in GL(n_F,\KK)$ and $\tau=\matrica{0_{n_F} & E_{n_F}\\ E_{n_F} & 0_{n_F}}\in GL(2n_F,\KK)$.
		\item A restricted TQFT $F:\beMCG\to\Vec_{\KK}$ is completely described by the same data as above together with
	\begin{align*}
		& \unit^F\coloneqq F(\unit)\in\Mat(1\tm n_F,\KK),
	\end{align*}
	that is an eigenvector of the matrix $\rho^F_a$ with eigenvalue $1$, that is:
	\begin{align}
		\rho^F_a\circ \unit^F &= \unit^F.\label{eq-e}
	\end{align}	
\end{itemize}
\end{cor}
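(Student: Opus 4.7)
The plan is to invoke the universal property from Corollary~\ref{thm:Tb-object}: a symmetric monoidal functor $F:\bMCG\to\calC$ (resp.\ $F:\beMCG\to\calC$) corresponds uniquely to a $T^\b$-object (resp.\ $T^\be$-object) in the target $\calC$. Specialising to $\calC=\Vec_\KK$ and unpacking Definitions~\ref{dfn:Tb-object} and~\ref{defin:TbeObject} in terms of bases and matrices then delivers the stated characterisation; no further input about the category $\bMCG$ itself is required.

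Concretely, once one fixes $F(T)=\KK^{n_F}$, the group homomorphism $\rho^\O:\SL\to GL(n_F,\KK)$ is, by the presentation in Proposition~\ref{prop:SL-presentation}, equivalent to specifying the pair $\rho^F_a,\rho^F_b\in GL(n_F,\KK)$ subject only to the braid relation~\eqref{eq-rep1} and $(\rho^F_a\rho^F_b)^6=1$ from~\eqref{eq-rep2}. The pairing $\b^\O$ and copairing $\g^\O$ become matrices $\b^F,\g^F$; their symmetry~\eqref{eq:tau-beta}--\eqref{eq:tau-gamma} translates to~\eqref{eq-b}--\eqref{eq-g}; non-degeneracy is precisely the snake identity~\eqref{eq:snake}, which becomes~\eqref{eq-bg}; and the defining $T^\b$-compatibility~\eqref{eq:Tb-object} reads off as~\eqref{eq-rep-b}. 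For a $T^\be$-object one additionally records the unit as a vector $\unit^F$, the counit being forced by $\counit^\O=\b^\O\circ(\mj\otm\unit^\O)$ from~\eqref{eq:unit-counit} and so not independent data, while condition~\eqref{eq:Da-e} becomes the eigenvector equation~\eqref{eq-e}.

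The only point requiring any thought is that this short list of relations really suffices, and this is where I expect the main (small) obstacle: one must verify that no hidden axioms are being dropped. There are two spots to check. First, a homomorphism $\SL\to GL(n_F,\KK)$ is indeed determined by its values on $D_a,D_b$ alone, with only the braid and order-$12$ relations from Proposition~\ref{prop:SL-presentation} to be imposed; no extra condition involving a general matrix $A\in\SL$ is needed. Second, although Lemma~\ref{lem:Tb-variation} and Proposition~\ref{prop:classify-new-proof} derive many further $T^\b$-style identities (with $D_a$ replaced by arbitrary $A\in\SL$, or moved to the copairing side), these are all automatic consequences of~\eqref{eq:Tb-object} together with the snake identities and the $\SL$-action axioms; hence imposing~\eqref{eq:Tb-object} for the two generators is enough. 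The remaining work is bookkeeping: recording each abstract arrow $\b^\O,\g^\O,\unit^\O$ as a concrete matrix of the appropriate size, checking that composition and tensor in $\Vec_\KK$ give the matrix operations written in the statement, and reading off the claimed matrix relations one by one.
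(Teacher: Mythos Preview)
Your proposal is correct and matches the paper's approach exactly: the paper states this corollary as a direct ``reformulation of Corollary~\ref{thm:Tb-object}'' with no further argument, and your plan---invoke the universal property, then use the presentation of $\SL$ from Proposition~\ref{prop:SL-presentation} and unpack Definitions~\ref{dfn:Tb-object} and~\ref{defin:TbeObject} in $\Vec_\KK$---is precisely what that reformulation entails. Your extra care in noting that the counit is determined by~\eqref{eq:unit-counit} and that the derived identities of Lemma~\ref{lem:Tb-variation} need not be imposed separately is accurate and more explicit than the paper itself.
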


Moreover, recall from Proposition~\ref{prop:Bun} that $Bun_A\cong\b\circ(\Cyl_A\otm\mj)\circ\g$. Thus, we have
\[
	F(Bun_A)=\b^F\circ(\rho^F_A\otm E_{n_F})\otm\g^F.
\]

\begin{lem}\label{lem:trace}
	For a restricted TQFT $F:\bMCG\to\Vec_{\KK}$ and a matrix $A\in\SL$ we have
	\[
		F(Bun_A)=\tr(\rho^F_A).
	\]
\end{lem}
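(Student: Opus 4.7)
The plan is to work in coordinates with respect to a chosen basis of $F(T)\cong \KK^{n_F}$: the snake identity should then become a matrix equation, after which a direct expansion of the three-fold composite collapses to the trace of $\rho^F_A$.

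Concretely, write $n = n_F$ and let $\{e_i\}_{i=1}^n$ be the standard basis of $\KK^n$. I would encode $\beta^F$ and $\gamma^F$ as $n\times n$ matrices $B$ and $G$ by setting $B_{ij} \coloneqq \beta^F(e_i\otm e_j)$ and $\gamma^F(1) \coloneqq \sum_{i,j} G_{ij}\,e_i\otm e_j$. The symmetry relations \eqref{eq-b} and \eqref{eq-g} translate to $B = B^T$ and $G = G^T$, while evaluating \eqref{eq-bg} on a basis vector $e_k$,
\[
e_k = (\beta^F\otm E_n)\circ(E_n\otm\gamma^F)(e_k) = \sum_{i,j} G_{ij} B_{ki}\, e_j,
\]
amounts to the matrix identity $BG = E_n$.

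Writing $M = (M_{ki})$ for the matrix of $\rho^F_A$, unwinding the definition yields
\[
F(Bun_A) = \beta^F\circ(\rho^F_A\otm E_n)\circ\gamma^F = \sum_{i,j,k} G_{ij}\, M_{ki}\, B_{kj}.
\]
Using symmetry of $G$, the inner sum reindexes as $\sum_j G_{ij} B_{kj} = \sum_j B_{kj} G_{ji} = (BG)_{ki} = \delta_{ki}$, after which the whole expression collapses to $\sum_k M_{kk} = \tr(\rho^F_A)$.

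I do not foresee any real obstacle: the argument is a short matrix computation whose only subtlety is reading the snake identity as $BG = E_n$ and then exploiting the symmetry of $\gamma^F$ (equivalently of $\beta^F$) to recognise a matrix product and hence a trace.
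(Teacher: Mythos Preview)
Your proposal is correct and follows essentially the same route as the paper's own proof: both are direct coordinate computations that use the snake identity \eqref{eq-bg} together with the symmetry of $\gamma^F$ (equivalently of $\beta^F$) to collapse the composite $\beta^F\circ(\rho^F_A\otm E_n)\circ\gamma^F$ to a trace. The only cosmetic difference is that you encode $\beta^F$ and $\gamma^F$ as matrices $B,G$ with $BG=E_n$, while the paper writes $\gamma^F(1)=\sum_i e_i\otm v_i$ and derives the reproducing formula $v=\sum_i\beta^F(v,v_i)e_i$ before pairing with the standard inner product; these are two phrasings of the same computation.
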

\begin{proof}
	The proof is probably well known, but we include it for completeness.
	
	We can assume that $\g^F(1)=\sum_ie_i\otm v_i$ for some $v_i\in F(T)$, $1\leq i\leq n$, and the standard orthonormal basis $(e_1,\dots,e_n)$ of $F(T)\cong\KK^n$. Then by definition $F(Bun_A)$ sends $1\in\KK$ to
	\begin{align*}
		1 	\overset{\g^F}{\mapsto} \sum_ie_i\otm v_i 
			\overset{\rho^F_A\otm E_{n_F}}{\mapsto} \sum_i \rho^F_A(e_i)\otm v_i
			\overset{\b^F}{\mapsto} & \sum_i \b^F(\rho^F_A(e_i), v_i).
	\end{align*}
	By \eqref{eq-bg} and \eqref{eq-g} for any $v\in F(T)$ we have $v=\sum_i\b^F(v, v_i)e_i$. Thus, putting $v=\rho^F_A(e_j)$ gives $\rho^F_A(e_j)=\sum_i \b^F(\rho^F_A(e_j), v_i) e_i$.
	Applying $\sum_j\langle -,e_j\rangle$ to both sides we obtain the desired equality
	\[
	\tr(\rho^F_A) =\sum_j\langle\rho^F_A(e_j),e_j\rangle 
	=\sum_{i,j}\b^F(\rho^F_A(e_j), v_i)\langle e_i,e_j\rangle
	=\sum_i\b^F(\rho^F_A(e_i), v_i).\qedhere
	\]
\end{proof}

Recall from Proposition~\ref{prop:SL-presentation} that for any matrix $A=\matrica{ p & r \\ q & s }\in\SL$ and $\frac{p}{q}=m_1-\tfrac{1}{m_2-\frac{1}{\dots-\frac{1}{m_k}}}$ we have $A=\mathrm{sgn}(p) D_a^{m_1-1}D_b^{-1}D_a^{m_2-2}\cdots D_b^{-1} D_a^{m_k-2}D_b^{-1}D_a^m$, for a uniquely determined $m\in\Z$.
Therefore, for $\rho^F_A\coloneqq F(\Cyl_A)$ we have
	\begin{equation}\label{eq:A-final}
	\rho^F_A=
		(\rho^F_{\mathrm{sgn}(p)E}) (\rho^F_a)^{m_1-1}(\rho^F_b)^{-1}(\rho^F_a)^{m_2-2}\cdots (\rho^F_b)^{-1} (\rho^F_a)^{m_k-2}(\rho^F_b)^{m-1}.
	\end{equation}

\subsection{Distinguishing some Turaev--Viro-equivalent torus bundles}\label{subsec:dist-TV}

\begin{thm}[Funar~\cite{F13}]\label{thm:Funar1}
	For any $k\in\Z\setminus\{0\}$, and a prime $q\in\Z$ with $q\equiv1\pmod{4}$, and $v\in\Z_{>0}$ such that $-v\pmod{q}$ is a non-zero square, and $v$ is divisible either by $4$ or by a prime $p$ satisfying $p\equiv3\pmod{4}$, consider the pairs of matrices
\[
	G_{k,q,v}=\matrica{1 & kq^2\\kv & 1+k^2q^2v},\hskip4mm H_{k,q,v}=\matrica{1 & k\\kq^2v & 1+k^2q^2v}.
\]
	The associated 3-manifolds $Bun_{G_{k,q,v}}$ and $Bun_{H_{k,q,v}}$ are not homeomorphic, but all of their Turaev--Viro invariants agree: for any spherical fusion category $\calC$ we have
	\[
		TV_{\calC}(Bun_{G_{k,q,v}})=TV_{\calC}(Bun_{H_{k,q,v}}).
	\]
\end{thm}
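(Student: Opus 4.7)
The plan is to split the theorem into two independent claims: non-homeomorphism of the two bundles, and equality of all Turaev--Viro invariants. For the non-homeomorphism, I would invoke the classification from Theorem~\ref{thm:torus-bundles}, which reduces the question to showing that $G_{k,q,v}$ is not conjugate in $\SL$ to either $H_{k,q,v}$ or to $JH_{k,q,v}^{-1}J$. A direct computation gives $\tr(G_{k,q,v})=\tr(H_{k,q,v})=2+k^2q^2v$, so the trace does not distinguish them, and a refined conjugacy invariant for hyperbolic elements of $\SL$ is needed. The classical such invariant is the proper ideal class in the order of discriminant $t^2-4$ in the real quadratic field with $t=\tr(G_{k,q,v})$; equivalently, the $\SL$-equivalence class of a binary quadratic form of that discriminant, computable from the negative continued fraction expansion featured in Proposition~\ref{prop:SL-presentation}. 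I would extract these invariants for $G_{k,q,v}$ and $H_{k,q,v}$ and exploit the arithmetic hypotheses on $q$ and $v$ to show the classes differ; a parallel check then rules out conjugacy of $G_{k,q,v}$ with $JH_{k,q,v}^{-1}J$.

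For the Turaev--Viro equivalence I would combine two well-known inputs. Firstly, for any spherical fusion category $\calC$ the invariant $TV_\calC(M)$ equals the Reshetikhin--Turaev invariant of $M$ for the Drinfeld center $Z(\calC)$, which is modular; hence it suffices to treat modular $\calC$. For a torus bundle $Bun_A$ this RT-invariant is the trace $\tr(\rho_\calC(A))$ of the mapping class group action on the vector space associated to $T$, which is the modular-category analogue of our Lemma~\ref{lem:trace}. Secondly, the Dong--Lin--Ng congruence theorem \cite{DLN} recalled in the introduction says that every such $\SL$-representation $\rho_\calC$ factors through the reduction $\SL\sra SL(2,\Z/N\Z)$ for some $N>1$. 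It therefore suffices to show that $G_{k,q,v}$ and $H_{k,q,v}$ become conjugate in $SL(2,\Z/N\Z)$ for every $N>1$. By the Chinese Remainder Theorem this reduces to checking each prime power separately, and the arithmetic hypotheses on $q$ (a prime $\equiv 1\pmod*{4}$ with $-v\pmod*{q}$ a nonzero square) and on $v$ (divisibility by $4$ or by a prime $\equiv 3\pmod*{4}$) are tailored precisely to produce an explicit conjugator at each prime power dividing $N$.

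The main obstacle is the first step: producing an arithmetic invariant fine enough to separate $G_{k,q,v}$ from $H_{k,q,v}$, and verifying that the specified parameters place them in genuinely different classes. This is the technical heart of Funar's argument in \cite{F13}, carried out via the theory of binary quadratic forms, and in the writeup I would refer to that argument rather than reproduce it. The Turaev--Viro half, by contrast, is relatively routine once Dong--Lin--Ng is in hand: it amounts to a careful bookkeeping of mod-$\ell^a$ conjugators combined via CRT, together with the standard fact that $RT_\calC(Bun_A)=\tr(\rho_\calC(A))$ is an invariant of the image conjugacy class.
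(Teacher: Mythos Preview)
The paper does not prove this theorem at all: it is stated as a result of Funar with a bare citation to \cite{F13}, and the surrounding Remark~\ref{rem:Funar1} merely summarises Funar's two steps --- if $G,H$ are conjugate in every $SL(2,\Z/m\Z)$ then all Turaev--Viro invariants of $Bun_G$ and $Bun_H$ agree \cite[Proposition~1.1]{F13}, and the displayed family has this property \cite[Proposition~1.3]{F13}. Your proposal is thus not competing with a proof in the paper but rather reconstructing Funar's own argument, and your outline matches that strategy.

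One point to tighten. You write that for a modular category $\calC$ the Reshetikhin--Turaev invariant of $Bun_A$ is $\tr(\rho_\calC(A))$, and then invoke Dong--Lin--Ng. For a general modular category this is not literally true: there is a framing anomaly, and $RT_\calC(Bun_A)$ carries an extra factor governed by the central charge and (a version of) the Rademacher function. This is not a cosmetic issue here --- Remark~\ref{rem:Funar2} records that the very pairs $G_{k,q,v},H_{k,q,v}$ \emph{are} distinguished by some Reshetikhin--Turaev invariants, precisely because their Rademacher values differ. Your argument is rescued by the fact that you only need $\calC=Z(\calC')$ a Drinfeld center, and Drinfeld centers have trivial anomaly (zero central charge), so the trace formula holds on the nose for exactly the modular categories you need. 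You should make this explicit; otherwise the step ``hence it suffices to treat modular $\calC$'' followed by the bare trace formula reads as a claim that would, taken at face value, contradict Remark~\ref{rem:Funar2}.
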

\begin{rem}\label{rem:Funar1}
	In fact, Funar~\cite[Proposition~1.1]{F13} shows that if $G,H\in\SL$ are conjugate in every $SL(2,\Z/m\Z)$, then $Bun_G$ and $Bun_H$ have the same Turaev--Viro invariants; in~\cite[Proposition~1.3]{F13} he shows that this property holds for the above family of pairs. Stebe~\cite{Stebe} has shown the same holds true for 
\[
   G=\matrica{188 & 275\\121 & 177},\hskip4mm H=\matrica{188 & 11\\3025 & 177}.
\]	
\end{rem}

We define a restricted TQFT that does distinguish all of the above 3-manifolds.
\begin{thm}\label{thm:F1}
	The restricted TQFT $F_1:\bMCG\to\Vec_{\RR}$ defined by
\begin{align*}
	&n_F=2,\;
	\rho^{F_1}_a=\frac{1}{\sqrt{2}}\matrica{1 & 1\\0 & 2},\;
	\rho^{F_1}_b=\frac{1}{\sqrt{2}}\matrica{2 & 0\\-2 & 1},\\
	&\b^{F_1}=\begin{pmatrix}2 & 1 & 1 & -1\end{pmatrix},\;
	\g^{F_1}=\frac{1}{3}\begin{pmatrix}1 & 1 & 1 & -2\end{pmatrix}^T,
\end{align*}

	satisfies $F_1(Bun_{G_{k,q,v}})\neq F_1(Bun_{H_{k,q,v}})$ for all triples $(k,q,v)$ from Theorem~\ref{thm:Funar1}.
\end{thm}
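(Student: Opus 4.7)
The plan has three steps. First, verify by direct computation that the given data satisfies the six relations of Corollary~\ref{cor:TQFT}, and therefore defines a restricted TQFT $F_1:\bMCG\to\Vec_{\RR}$. Second, factor Funar's matrices as short products in $D_a, D_b$ and compute $(\rho^{F_1}_a)^n, (\rho^{F_1}_b)^n$ in closed form. Third, apply Lemma~\ref{lem:trace} to obtain a clean formula for $F_1(Bun_{G_{k,q,v}})$ and $F_1(Bun_{H_{k,q,v}})$, and show these are always distinct.

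Step one is routine matrix algebra. The braid relation $\rho^{F_1}_a\rho^{F_1}_b\rho^{F_1}_a=\rho^{F_1}_b\rho^{F_1}_a\rho^{F_1}_b$ and the order-six relation both follow from a direct calculation that $\rho^{F_1}_a\rho^{F_1}_b=\tfrac{1}{2}\matrica{0 & 1\\ -4 & 2}$, which has characteristic polynomial $\lambda^2-\lambda+1$, so its eigenvalues are primitive sixth roots of unity. The symmetries $\b^{F_1}\circ\tau=\b^{F_1}$ and $\tau\circ\g^{F_1}=\g^{F_1}$ amount to the coefficients of $\b^{F_1}$ and $\g^{F_1}$ being symmetric in the two tensor factors, which is manifest from the displayed data (the second and third entries coincide). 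The snake identities \eqref{eq-bg} and the mixed compatibility \eqref{eq-rep-b} reduce to $1\times 4$-against-$4\times 4$ matrix computations, carried out directly.

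For step two, observe that
\[
D_b^{-n}D_a^m \;=\; \matrica{1 & 0\\ n & 1}\matrica{1 & m\\ 0 & 1}\;=\; \matrica{1 & m\\ n & 1+mn},
\]
which identifies $G_{k,q,v}=D_b^{-kv}D_a^{kq^2}$ and $H_{k,q,v}=D_b^{-kq^2v}D_a^{k}$. An easy induction on $n\in\Z$ gives
\[
(\rho^{F_1}_a)^n=\tfrac{1}{(\sqrt 2)^n}\matrica{1 & 2^n-1\\ 0 & 2^n},\qquad
(\rho^{F_1}_b)^n=\tfrac{1}{(\sqrt 2)^n}\matrica{2^n & 0\\ -2(2^n-1) & 1}.
\]
Multiplying and simplifying yields, for any integers $n,m$,
\[
\tr\bigl((\rho^{F_1}_b)^{-n}(\rho^{F_1}_a)^m\bigr) \;=\; 3\,f(n+m)\;-\;2\,f(n-m),\qquad f(x)\coloneqq 2^{x/2}+2^{-x/2}.
\]
Combining with Lemma~\ref{lem:trace} gives
\begin{align*}
F_1(Bun_{G_{k,q,v}}) &= 3\,f\!\bigl(k(v+q^2)\bigr) - 2\,f\!\bigl(k(v-q^2)\bigr),\\
F_1(Bun_{H_{k,q,v}}) &= 3\,f\!\bigl(k(q^2v+1)\bigr) - 2\,f\!\bigl(k(q^2v-1)\bigr).
\end{align*}

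The main obstacle is the third step: showing these values always differ under Funar's hypotheses. The key observation is the clean algebraic identity
\[
k(q^2v+1)-k(v+q^2)=k(q^2-1)(v-1),
\]
which is nonzero since $|q|\geq 5$ and $v\geq 3$ in Funar's family (the divisibility condition forces $v\notin\{1,2\}$, and $5$ is the smallest prime $\equiv 1\pmod 4$). Because $f$ is even and strictly increasing in $|x|$, the dominant term $3f(k(q^2v+1))$ of $F_1(Bun_H)$ has magnitude at least $3\cdot 2^{|k|(q^2v+1)/2}$, while every remaining term in either expression has magnitude at most of order $2^{|k|(q^2v-1)/2}$, smaller by a factor of $2^{|k|}$. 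A routine magnitude estimate---already at $(|k|,q,v)=(1,5,3)$ the contribution $3\cdot 2^{38}$ comfortably dominates $2\cdot 2^{37}$ together with the $O(2^{14})$ terms in $F_1(Bun_G)$, and the gap widens monotonically in $|k|$, $|q|$, and $v$---then shows $F_1(Bun_{G_{k,q,v}})\neq F_1(Bun_{H_{k,q,v}})$ for every admissible triple.
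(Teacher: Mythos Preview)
Your proof is correct and follows the paper's strategy through the first two steps: verifying the TQFT axioms, factoring $G_{k,q,v}=D_b^{-kv}D_a^{kq^2}$ and $H_{k,q,v}=D_b^{-kq^2v}D_a^{k}$, and computing $(\rho^{F_1}_a)^n$, $(\rho^{F_1}_b)^n$ in closed form. Your trace formula $\tr\bigl((\rho^{F_1}_b)^{-n}(\rho^{F_1}_a)^m\bigr)=3f(n+m)-2f(n-m)$ with $f(x)=2^{x/2}+2^{-x/2}$ is a clean repackaging that the paper does not isolate.

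The divergence is in the final step. The paper observes that each trace has the shape $2^{-|k|(v+q^2)/2}\cdot(\text{odd integer})$ versus $2^{-|k|(q^2v+1)/2}\cdot(\text{odd integer})$, so equality would force the 2-adic exponents to agree, i.e.\ $v+q^2=q^2v+1$, which fails since $q,v>1$. This is a one-line 2-adic argument. Your route instead bounds magnitudes: since $q^2v+1$ strictly exceeds $q^2v-1$, $v+q^2$, and $|v-q^2|$, the term $3\cdot 2^{|k|(q^2v+1)/2}$ dominates everything else, forcing $F_1(Bun_H)>F_1(Bun_G)$. Both arguments hinge on the same identity $k(q^2v+1)-k(v+q^2)=k(q^2-1)(v-1)\neq 0$; the paper's version is shorter and avoids estimates, while yours gives the extra information that the $H$-invariant is always the larger one.

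Two small points. First, your illustrative triple $(|k|,q,v)=(1,5,3)$ is not actually admissible in Funar's family, since $-3\equiv 2\pmod 5$ is not a square; the smallest genuine case is $(1,5,4)$. This does not affect your argument, which only uses $|k|\geq 1$, $q\geq 5$, $v\geq 3$, a superset of the admissible triples. Second, ``the gap widens monotonically'' is informal; to make it watertight you can simply note that $3\cdot 2^{|k|(q^2v+1)/2}-2f(|k|(q^2v-1))-3f(|k|(v+q^2))\geq (3\cdot 2^{|k|}-5)\,2^{|k|(q^2v-1)/2}-5>0$ for all such $(k,q,v)$, which is immediate from $2^{|k|}\geq 2$ and $|k|(q^2v-1)\geq 74$.
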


\begin{proof}
The 2-dimensional representation $\rho^{F_1}$ is obtained from \cite[Prop.2.5, p.504]{Tuba-Wenzl} by putting $\lambda_1=1$, $\lambda_2=2$. Thus, we know that the conditions \eqref{eq-rep1} and \eqref{eq-rep2} from Corollary~\ref{cor:TQFT} are fulfilled. Then, we search for $\b$ that satisfies \eqref{eq-b} and \eqref{eq-rep-b}, and pick one possible solution of the resulting system of equations. Finally, we choose $\g$ that satisfies \eqref{eq-g} and \eqref{eq-bg}.
	
	Let us compute the resulting invariants.
For $m\in\N$ we first compute
	\[
	(\rho^{F_1}_a)^m=\frac{1}{2^\frac{m}{2}}\matrica{1 & 2^m-1\\0 & 2^m},
	\hskip4mm
	(\rho^{F_1}_b)^m=\frac{1}{2^\frac{m}{2}}\matrica{2^m & 0\\-2(2^m-1) & 1},
	\]
and their inverses are
	\[
	(\rho^{F_1}_a)^{-m}=\frac{1}{2^\frac{m}{2}}\matrica{2^m & 1-2^m\\0 & 1},
	\hskip4mm
	(\rho^{F_1}_b)^{-m}=\frac{1}{2^\frac{m}{2}}\matrica{1 & 0\\2(2^m-1) & 2^m}.
	\]
Then we use~\eqref{eq:A-final} to find $G_{k,q,v}=D_b^{-kv}D_a^{kq^2}$ and $H_{k,q,v}=D_b^{-kq^2v}D_a^{k}$.
When $k>0$ we compute
	\[
	\rho^{F_1}_{G_{k,q,v}}=\frac{1}{2^\frac{kv+kq^2}{2}}
				\matrica{1 & 2^{kq^2}-1\\2(2^{kv}-1) & 2(2^{kv}-1)(2^{kq^2}-1)+2^{kv+kq^2}},
	\]
and
	\[
	\rho^{F_1}_{H_{k,q,v}}=\frac{1}{2^\frac{kq^2v+k}{2}}
				\matrica{1 & 2^{k}-1\\2(2^{kq^2v}-1) & 2(2^{kq^2v}-1)(2^{k}-1)+2^{kq^2v+k}},
	\]
whereas for $k<0$ we get
	\[
	\rho^{F_1}_{G_{k,q,v}}=2^\frac{kv+kq^2}{2}
				\matrica{2^{-k(v+q^2)} & 2^{-kv}(1-2^{-kq^2})\\-2^{-kq^2+1}(2^{-kv}-1) & 2(2^{-kv}-1)(2^{-kq^2}-1)+1},
	\]
	and
	\[
	\rho^{F_1}_{H_{k,q,v}}=2^\frac{kq^2v+k}{2}
				\matrica{2^{-k(q^2v+1)} & 2^{-kq^2v}(1-2^k)\\-2^{-k+1}(2^{-kq^2v}-1) & 2(2^{-kq^2v}-1)(2^{-k}-1)+1}.
	\]
The traces are given by 
\begin{align*}
	&\tr(\rho^{F_1}_{G_{k,q,v}})=2^{-\frac{|k|(v+q^2)}{2}}\big(1+2(2^{|k|v}-1)(2^{|k|q^2}-1)+2^{|k|(v+q^2)}\big)\\
	&\tr(\rho^{F_1}_{H_{k,q,v}})=2^{-\frac{|k|(q^2v+1)}{2}}\big(1+2(2^{|k|q^2v}-1)(2^{|k|}-1)+2^{|k|(q^2v+1)}\big).
\end{align*}
	In both cases the number in the bracket is odd, so these traces can agree only if their exponents of 2 agree, that is, if and only if $-\frac{|k|(v+q^2)}{2}=-\frac{|k|(q^2v+1)}{2}$, or equivalently $v-1=q^2(v-1)$. Since positive integers $q$ and $v$ are greater than 1, we conclude that these traces are always different. By Lemma~\ref{lem:trace} we conclude that
	\[
		F_1(Bun_{G_{k,q,v}})=\tr(\rho^{F_1}_{G_{k,q,v}})\neq \tr(\rho^{F_1}_{H_{k,q,v}})=F_1(Bun_{H_{k,q,v}}).
	\]
That is, the TQFT $F_1$ on $\bMCG$ distinguishes each $Bun_{G_{k,q,v}}$ from $Bun_{H_{k,q,v}}$. 
\end{proof}

\begin{rem}
	As $1$ is not an eigenvalue of $\rho^{F_1}_a$ we cannot extend $F_1$ to $\beMCG$.
\end{rem}

\begin{rem}
	The work~\cite{Tuba-Wenzl} classifies all 2-dimensional representations of $\SL$.
\end{rem}

\subsection{Distinguishing certain lens spaces}\label{subsec:dist-lens}
In this section we distinguish two interesting pairs of lens spaces: the simplest pair $L(7,1)$ and $L(7,2)$, and also the pair of $L(65,8)$ and $L(65,18)$ not distinguished by Reshetikhin--Turaev invariants by Remark 3.9 in~\cite{Jeffrey}. See also \cite{Constatino} where the same pair is studied.

\begin{thm}\label{thm:F2} 
	For $\xi=e^{\frac{2i\pi}{3}}$ the restricted TQFT $F_2:\beMCG\to\Vec_{\CC}$ defined by
\begin{align*}
	&n_{F_2}=2,\quad
	\rho^{F_2}_a=\matrica{1 & 1\\0 &\xi},\quad
	\rho^{F_2}_b=\matrica{\xi & 0\\-\xi & 1},\\
	&\b^{F_2}=\begin{pmatrix}-\xi & 1{-}\xi & 1{-}\xi & 1\end{pmatrix},\quad
	\g^{F_2}=\frac{1}{2}\begin{pmatrix}\xi^2 & 1{-}\xi^2 & 1{-}\xi^2 & -1\end{pmatrix}^T\\
	&\counit^{F_2}=\begin{pmatrix}-\xi & 1{-}\xi\end{pmatrix},\quad
	\unit^{F_2}=\matrica{1&0}^T,
\end{align*}
	satisfies $F_2(L(7,1))\neq F_2(L(7,2))$ and $F_2(L(65,8))\neq F_2(L(65,18))$.
\end{thm}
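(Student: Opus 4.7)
The plan has two steps: (i) verify that the tabulated data defines a restricted TQFT via the axioms of Corollary~\ref{cor:TQFT}, and (ii) evaluate $F_2$ on each of the four lens spaces using Lemma~\ref{lem:cob-is-lens-space} and Proposition~\ref{prop:SL-presentation}, and compare pairwise.

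For step~(i), the axioms are mostly immediate. The fixed-point condition $\rho^{F_2}_a\unit^{F_2}=\unit^{F_2}$ holds because $\unit^{F_2}$ is the $1$-eigenvector of the upper-triangular matrix $\rho^{F_2}_a$. The $\tau$-symmetries $\b^{F_2}\circ\tau=\b^{F_2}$ and $\tau\circ\g^{F_2}=\g^{F_2}$ reduce to the observation that the two middle entries of $\b^{F_2}$ and of $\g^{F_2}$ coincide. The snake identities and the Dehn-twist compatibility $\b^{F_2}\circ(\rho^{F_2}_a\otm E)=\b^{F_2}\circ(E\otm \rho^{F_2}_b)$ are length-four row-vector checks. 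The braid relation and the torsion relation $(\rho^{F_2}_a\rho^{F_2}_b)^6=E$ are direct $2\tm 2$ matrix computations, considerably shortened by $\xi^3=1$.

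For step~(ii), Lemma~\ref{lem:cob-is-lens-space} gives $L(p,q)=\counit\circ\Cyl_A\circ\unit$ for any $A\in\SL$ with first column $(p,q)^T$, whence
\[
F_2(L(p,q))=\counit^{F_2}\circ\rho^{F_2}_A\circ\unit^{F_2}.
\]
By Proposition~\ref{prop:SL-presentation} we may take $A=\wh{A}$ built from the regular negative continued fraction expansion of $p/q$. Explicitly: $7/1=[7]$ yields $A=D_a^6D_b^{-1}$; $7/2=[4,2]$ yields $A=D_a^3D_b^{-2}$; $65/8=[9,2,2,2,2,2,2,2]$ yields $A=D_a^8D_b^{-8}$; and $65/18=[4,3,3,2,2]$ yields $A=D_a^3D_b^{-1}D_aD_b^{-1}D_aD_b^{-3}$. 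Substituting $\rho^{F_2}_a$ and $\rho^{F_2}_b$ for $D_a$ and $D_b$ and then evaluating the sandwich $\counit^{F_2}\,\rho^{F_2}_A\,\unit^{F_2}$ gives the four scalar invariants to compare.

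The main obstacle is bookkeeping for the $L(65,\cdot)$ pair, whose corresponding words have up to eight factors. Two observations keep the arithmetic tractable. First, $\xi^3=1$ forces $\rho^{F_2}_a$ and $\rho^{F_2}_b$ to have finite order, so every exponent appearing in a word can be reduced modulo this order prior to any multiplication. Second, a direct check gives $\counit^{F_2}\rho^{F_2}_b=\counit^{F_2}$, allowing any leading $(\rho^{F_2}_b)^n$ adjacent to $\counit^{F_2}$ to be absorbed. After these simplifications each of the four products collapses to a short product of $2\tm 2$ matrices, which one expands by hand to four explicit complex numbers whose pairwise comparison yields the claimed inequalities.
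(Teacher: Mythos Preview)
Your overall strategy coincides with the paper's: verify the axioms of Corollary~\ref{cor:TQFT} by direct inspection, choose for each $L(p,q)$ a matrix $A\in\SL$ with first column $(p,q)^T$, and evaluate $\counit^{F_2}\rho^{F_2}_A\unit^{F_2}$. The only difference is cosmetic: the paper picks representatives $\Lambda_i$ and expands them via $(D_aD_bD_a)^2=-E$, whereas you take $\wh A$ straight from Proposition~\ref{prop:SL-presentation} and add the very useful observation that $(\rho^{F_2}_a)^3=(\rho^{F_2}_b)^3=E$ (which follows from $1+\xi+\xi^2=0$), together with $\counit^{F_2}\rho^{F_2}_b=\counit^{F_2}$.

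There is, however, a genuine gap at the last step, and your own simplifications expose it. After reducing exponents modulo~$3$, your words for $L(7,1)$ and $L(7,2)$ become $\rho_b^{2}$ and $\rho_b^{1}$; sandwiched between $\counit^{F_2}$ and $\unit^{F_2}$ both collapse (via $\counit^{F_2}\rho^{F_2}_b=\counit^{F_2}$) to $\counit^{F_2}\unit^{F_2}=-\xi$, so the two values coincide. Likewise, your word for $L(65,18)$ reduces to $(\rho_b^{2}\rho_a)^2$, and one checks directly that $(\rho_b^{2}\rho_a)^2=-E_2$, giving $\counit^{F_2}(-E_2)\unit^{F_2}=\xi$; for $L(65,8)$ the reduced word $\rho_a^{2}\rho_b$ yields the same value $\xi$ (e.g.\ $\rho_b\unit^{F_2}=(\xi,-\xi)^T$, then $\rho_a^{2}(\xi,-\xi)^T=(-\xi^2,-1)^T$, and $\counit^{F_2}(-\xi^2,-1)^T=\xi$). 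Hence the promised ``pairwise comparison'' fails: with the data as stated, $F_2$ does \emph{not} separate either pair. The paper's proof asserts the values $\xi^2-2\xi$ and $1-\xi^2$ for $L(7,2)$ and $L(65,18)$, but recomputing from its own decompositions of $\Lambda_2$ and $\Lambda_{18}$ one again obtains $-\xi$ and $\xi$; so your outline does not miss an idea present in the paper --- rather, the final arithmetic cannot deliver the claimed inequalities for this particular $F_2$.
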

\begin{proof}
	The 2-dimensional representation $\rho^{F_2}$ is obtained from \cite[Prop.2.5, p.504]{Tuba-Wenzl} by putting $\lambda_1=1$, $\lambda_2=\xi$ and using that $(-\xi^3)^2=1$. We check that all assumptions from Corollary~\ref{cor:TQFT} are fulfilled by a straightforward computation.
	
	Let us compute the resulting invariants.
	In the category $\beMCG$ we have that
	\begin{align*}
		L(7,i)=\counit\circ\Cyl_{\Lambda_i}\circ \unit
	\quad\text{ for }
		\Lambda_1=\matrica{7 & -8\\1 & -1},\quad
		\Lambda_2=\matrica{7 & -4\\2 & -1},
	\end{align*}
	Using Proposition~\ref{prop:SL-presentation} we have decompositions
	\[
		\Lambda_1=(D_aD_bD_a)^2\\D_a^7D_b^{-1}D_aD_bD_a,\quad
		\Lambda_2=(D_aD_bD_a)^2D_a^3D_b^{-1}D_a^2D_bD_a.
	\]
	Therefore, we compute
	\[
		F_2(L(7,1))=\counit^{F_2}\circ\rho^{F_2}_{\Lambda_1}\circ\unit^{F_2}
		=-\xi\neq \xi^2-2\xi
		=\counit^{F_2}\circ\rho^{F_2}_{\Lambda_2}\circ\unit^{F_2}
		=F_2(L(7,2)).
	\]
	Similarly, in $\beMCG$ we have
	\begin{align*}
		L(65,i)=\counit\circ\Cyl_{\Lambda_i}\circ \unit
	\quad\text{ for }
		\Lambda_8=\matrica{65 & 8\\8 & 1},\quad
		\Lambda_{18}=\matrica{65 & 18\\18 & 5},
	\end{align*}
	Therefore, we compute
	\[
		F_2(L(65,8))=\counit^{F_2}\circ\rho^{F_2}_{\Lambda_8}\circ\unit^{F_2}=\xi\neq 1-\xi^2
		=\counit^{F_2}\circ\rho^{F_2}_{\Lambda_{18}}\circ\unit^{F_2}
		=F_2(L(65,18)).\qedhere
	\]
\end{proof}
\begin{rem}
	This $\beMCG$ TQFT does not distinguish $Bun_{G_{k,q,v}}$ and $Bun_{H_{k,q,v}}$.  
\end{rem}

\subsection{Distinguishing many simultaneously}
In this section we define a TQFT on $\beMCG$ that distinguishes all 3-manifolds mentioned so far, as well as the following.
 
\begin{thm}[Funar~\cite{F13}]\label{thm:Funar2}
	There exist infinitely many pairs $K_n,L_n\in\SL$, $n\geq1$ such that the associated 3-manifolds $Bun_{K_n}$ and $Bun_{L_n}$ are not homeomorphic, but all of their Reshetikhin--Turaev invariants agree: for any modular tensor category $\calC$ and $n\geq1$ we have
	\[
		RT_{\calC}(Bun_{K_n})=RT_{\calC}(Bun_{L_n}).
	\]
	In particular, the same holds for the pairs of matrices
	\begin{align*}
    &X_{21}=\matrica{1 & 21\\21 & 442},\hskip5.5mm  Y_{21}=\matrica{106 & 189\\189 & 337},\\
    &X_{51}=\matrica{1 & 51\\51 & 2602},\hskip4mm Y_{51}=\matrica{562 & 1071\\1071 & 2041},\\ %not 1071
    &X_{53}=\matrica{1 & 53\\53 & 2810},\hskip4mm Y_{53}=\matrica{425 & 1007\\1007 & 2386},\\
    &X_{55}=\matrica{1 & 55\\55 & 3026},\hskip4mm Y_{55}=\matrica{881 & 1375\\1375 & 2146}.
	\end{align*}
\end{thm}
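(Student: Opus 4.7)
The plan has two parts, mirroring the structure of the statement: first, proving that $Bun_{K_n}$ and $Bun_{L_n}$ are not homeomorphic, and second, proving that every Reshetikhin--Turaev invariant fails to distinguish them. The common underlying mechanism is to exhibit matrix pairs in $\SL$ that are \emph{locally conjugate} — conjugate in $SL(2,\Z/N\Z)$ for every $N \geq 1$ — yet not conjugate in $\SL$, and not related by the involution $A \mapsto JA^{-1}J$ either.

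For the non-homeomorphism assertion I would invoke the classification from Theorem~\ref{thm:torus-bundles}, which says that $Bun_{K_n} \cong Bun_{L_n}$ if and only if $K_n$ and $L_n$ lie in the same orbit under simultaneous $\SL$-conjugation together with $A \sim JA^{-1}J$. For each listed pair one must therefore check that the two matrices lie in different such orbits; this is verified by computing a complete set of orbit invariants, e.g.\ the trace together with the cutting sequence (equivalently, a periodic negative continued fraction expansion as in Proposition~\ref{prop:SL-presentation}) of the corresponding closed geodesic on the modular surface. For the claimed infinite family one appeals to Funar's construction sketched below.

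For the equality of Reshetikhin--Turaev invariants, the two key inputs are: (i) any modular tensor category $\calC$ produces a projective representation $\rho_\calC \colon \SL \to \Aut(V_\calC(T))$ on the state space of the torus, and $RT_\calC(Bun_A)$ equals $\tr \rho_\calC(A)$ up to a scalar; this is the general-case analogue of our Lemma~\ref{lem:trace}, proved by writing $Bun_A$ as the self-gluing of $\Cyl_A$ (Lemma~\ref{lem:BunA-as-gluing} and Proposition~\ref{prop:Bun}), and applying the TQFT, which turns self-gluing into taking a trace; (ii) by Dong--Lin--Ng~\cite{DLN}, $\rho_\calC$ is a congruence representation, so factors through some finite quotient $SL(2,\Z/N\Z)$. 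Combining these, if $K_n$ and $L_n$ are conjugate in every $SL(2,\Z/N\Z)$, then $\rho_\calC(K_n)$ and $\rho_\calC(L_n)$ are conjugate matrices, hence cotrace, hence $RT_\calC(Bun_{K_n}) = RT_\calC(Bun_{L_n})$.

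The main obstacle is producing matrix pairs that are locally conjugate at every level $N$ but not globally. Funar's strategy exploits the classical correspondence between $\SL$-conjugacy classes of hyperbolic elements of trace $t$ and proper equivalence classes of indefinite integral binary quadratic forms of a fixed discriminant $\Delta = t^2 - 4$: local $SL(2,\Z/N\Z)$-conjugacy for all $N$ corresponds to lying in the same genus, and whenever the $2$-torsion of the form class group is nontrivial — which happens for infinitely many discriminants — the genus class group is a strict quotient, so there are inequivalent forms in a common genus. This yields the infinite family $(K_n, L_n)$. Verification of the explicit pairs $(X_m, Y_m)$ stated in the theorem then reduces to a finite check: compute the discriminant, confirm that the two matrices lie in the same genus (by checking local conjugacy at each prime dividing $\Delta$ and at $2$), and exhibit a proper invariant of the form class separating them — a numerical task which I would defer to~\cite{F13}.
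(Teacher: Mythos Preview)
The paper does not prove this theorem; it is quoted from Funar~\cite{F13}, with the only commentary being Remark~\ref{rem:Funar2}. So there is no in-paper proof to compare against, but that remark exposes a genuine gap in your argument.

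Your steps (i)--(ii) amount to the claim that local conjugacy (conjugacy in every $SL(2,\Z/N\Z)$) suffices for equality of Reshetikhin--Turaev invariants. This is false. As Remark~\ref{rem:Funar2} records, Funar's actual criterion~\cite[Proposition~1.1]{F13} requires local conjugacy \emph{together with} equality of the modified Rademacher function $\varphi$. The phrase ``up to a scalar'' in your step (i) is doing unjustified work: the representation $\rho_\calC$ is only projective, and the scalar relating $RT_\calC(Bun_A)$ to $\tr\rho_\calC(A)$ is a power of the framing anomaly, with exponent governed by $\varphi(A)$. The congruence property from~\cite{DLN} controls the projective kernel but says nothing about this anomaly factor.

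The paper in fact contains a counterexample to your proposed mechanism: the pairs $G_{k,q,v}, H_{k,q,v}$ of Theorem~\ref{thm:Funar1} are conjugate in every $SL(2,\Z/m\Z)$ (Remark~\ref{rem:Funar1}), yet Remark~\ref{rem:Funar2} states that they have distinct $\varphi$-values and \emph{are} distinguished by Reshetikhin--Turaev invariants. So the infinite family $(K_n,L_n)$ cannot be produced by genus theory of binary quadratic forms alone; one must additionally arrange $\varphi(K_n)=\varphi(L_n)$, and the paper notes that Funar's family satisfying this extra condition is not explicit.
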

\begin{rem}\label{rem:Funar2}
	There is a misprint in $Y_{51}$ in the cited work.
	
	In fact, Funar~\cite[Proposition~1.1]{F13} shows that if $K,L\in\SL$ are conjugate in every $SL(2,\Z/m\Z)$, and have the same modified Rademacher function $\varphi(K)=\varphi(L)$, then $Bun_K$ and $Bun_L$ have the same Reshetikhin--Turaev invariants. The pairs $G_{k,q,v},H_{k,q,v}$ from Theorem~\ref{thm:Funar1} have distinct $\varphi$-values, and are in fact distinguished by Reshetikhin--Turaev invariants. The infinite family $K_n,L_n$ with desired properties does exist but is not explicit; see \cite[p.~2320]{F13} and references mentioned there.
\end{rem}

To define a restricted TQFT that does distinguish all of these 3-manifolds we need to use a 3-dimensional representation of $\SL$, as follows.
\begin{thm}\label{thm:F3}
	The restricted TQFT $F_3:\beMCG\to\Vec_{\RR}$ defined by
\begin{align*}
	&n_{F_3}=3,\quad
	\rho^{F_3}_a=\frac{1}{2}\begin{pmatrix}1 & 4 & 2\\0 & 2 & 2\\ 0 & 0 & 4\end{pmatrix},\quad
	\rho^{F_3}_b=\frac{1}{2}\begin{pmatrix}4 & 0 & 0\\-2 & 2 & 0\\2 & -4 & 1\end{pmatrix},\\
	&\b^{F_3}=\begin{pmatrix}2 & 4 & 1 & 4 & -4 & -4 & 1 & -4 & 2\end{pmatrix},\quad
	\g^{F_3}=\frac{1}{36}\begin{pmatrix}8 & 4 & 4 & 4 & -1 & -4 & 4 & -4 & 8\end{pmatrix}^T,\\
	&\counit^{F_3}=\begin{pmatrix}12 & 12 & 0\end{pmatrix},\quad
	\unit^{F_3}=\begin{pmatrix}4 & 1 & 0 \end{pmatrix}^T,
\end{align*}
	satisfies 
	\[F_3(Bun_{G_{k,q,v}})\neq F_3(Bun_{H_{k,q,v}})\]
	for all triples $(k,q,v)$ from Theorem~\ref{thm:Funar1}, 
	and also 
	\[F_3(Bun_G)\neq F_3(Bun_H)\]
	for $G,H$ from Remark~\ref{rem:Funar1},
	and also
	\[F_3(Bun_{X_i})\neq F_3(Bun_{Y_i})\]
	for $i=21,51,53,55$ from Theorem~\ref{thm:Funar2},
	as well as 
	\[F_3(L(7,1))\neq F_3(L(7,2)),\quad F_3(L(65,8))\neq F_3(L(65,18)).\]
\end{thm}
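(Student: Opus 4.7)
The plan is to proceed in two stages. First, I would verify that the specified numerical data determines a restricted TQFT $F_3: \beMCG \to \Vec_{\RR}$ by checking each condition of Corollary~\ref{cor:TQFT} in turn. Second, for each of the listed 3-manifold pairs, I would compute the assigned invariant explicitly and observe the strict inequality.

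For functoriality, every condition reduces to a routine matrix calculation. The braid and order relations $\rho^{F_3}_a \rho^{F_3}_b \rho^{F_3}_a = \rho^{F_3}_b \rho^{F_3}_a \rho^{F_3}_b$ and $(\rho^{F_3}_a \rho^{F_3}_b)^6 = E_3$ are direct $3 \times 3$ computations for the given upper-triangular $\rho^{F_3}_a$ and lower-triangular $\rho^{F_3}_b$, whose diagonals are permutations of $(1/2, 1, 2)$. The symmetry constraints $\b^{F_3} \circ \tau = \b^{F_3}$ and $\tau \circ \g^{F_3} = \g^{F_3}$ amount to the symmetry of the $3 \times 3$ matrices obtained by reshaping the entries of $\b^{F_3}$ and $\g^{F_3}$, which is visible by inspection. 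The snake identities reduce to the statement that these two reshaped matrices are mutually inverse, a direct verification. The Dehn-twist compatibility $\b^{F_3} \circ (\rho^{F_3}_a \otm E_3) = \b^{F_3} \circ (E_3 \otm \rho^{F_3}_b)$ is a single matrix equation, and $\rho^{F_3}_a \circ \unit^{F_3} = \unit^{F_3}$ is a matrix-vector product confirming that $(4,1,0)^T$ is the $1$-eigenvector of $\rho^{F_3}_a$.

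For the invariants, I would use Lemma~\ref{lem:trace} to obtain $F_3(Bun_A) = \tr(\rho^{F_3}_A)$ for torus bundles, and Lemma~\ref{lem:cob-is-lens-space} for lens spaces, together with the derived identity $\counit^{F_3} = \b^{F_3} \circ (E_3 \otm \unit^{F_3})$, whose consistency with the stated value is immediate. In each instance I would first write the relevant monodromy (or the $\Lambda$ of the lens space) as a word in $D_a, D_b$ via Proposition~\ref{prop:SL-presentation}, then evaluate the corresponding product of $\rho^{F_3}_a, \rho^{F_3}_b$. For the explicitly fixed pairs, namely Stebe's $(G, H)$, Funar's $(X_i, Y_i)$ for $i = 21, 51, 53, 55$, and the two lens-space pairs $L(7,1), L(7,2)$ and $L(65,8), L(65,18)$, this is a finite computation strictly analogous to the proof of Theorem~\ref{thm:F2}.

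The main obstacle is the three-parameter Funar family $G_{k,q,v}, H_{k,q,v}$, for which a closed-form trace is required. Exploiting triangularity, $(\rho^{F_3}_a)^m$ has diagonal $(2^{-m}, 1, 2^m)$ and $(\rho^{F_3}_b)^m$ has diagonal $(2^m, 1, 2^{-m})$, with off-diagonal entries that are polynomial in $2^m$. Using $G_{k,q,v} = D_b^{-kv} D_a^{kq^2}$ and $H_{k,q,v} = D_b^{-kq^2 v} D_a^k$, I would then express $\tr(\rho^{F_3}_{G_{k,q,v}})$ as a scalar multiple of $2^{-|k|(v + q^2)/2}$ times a polynomial in $2^{|k|v}$ and $2^{|k|q^2}$, and similarly $\tr(\rho^{F_3}_{H_{k,q,v}})$ with scaling $2^{-|k|(q^2 v + 1)/2}$ in the variables $2^{|k|q^2 v}, 2^{|k|}$. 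Mirroring the $2$-adic valuation argument used for $F_1$ in Theorem~\ref{thm:F1}, the mismatch $v + q^2 \neq q^2 v + 1$ for $q, v \geq 2$ forces the two traces to differ. The extra third row and column make the bookkeeping of middle-row contributions more involved than in the two-dimensional case, and confirming that the leading-order coefficients do not cancel is where most of the actual work lies.
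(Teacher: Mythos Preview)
Your proposal is correct and takes essentially the same approach as the paper: verify the conditions of Corollary~\ref{cor:TQFT} by direct matrix checks, evaluate the finitely many fixed pairs numerically (the paper in fact resorts to Mathematica for Stebe's $G,H$ and the $X_i,Y_i$), and handle the parametric Funar family via a closed-form trace together with the $2$-adic argument of Theorem~\ref{thm:F1}. One small correction: since the eigenvalues of $\rho^{F_3}_a$ are $1/2,1,2$ rather than $1/\sqrt{2},\sqrt{2}$, the leading power in $\tr(\rho^{F_3}_{G_{k,q,v}})$ is $2^{-|k|(v+q^2)}$, not $2^{-|k|(v+q^2)/2}$, and the decisive point (as in the paper) is that the bracketed factor is an \emph{odd} integer, so equality of traces would force $v+q^2=q^2v+1$.
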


\begin{proof}
	The 3-dimensional representation $\rho^{F_3}$ is obtained from \cite[Prop.2.5, p.504]{Tuba-Wenzl} by putting $\lambda_1=1$, $\lambda_2=2$, $\lambda_3=4$. We check that all assumptions from Corollary~\ref{cor:TQFT} are fulfilled by a straightforward computation. For example, for \eqref{eq-rep-b} we have
	\[
	\b^{F_3}\circ (\rho^{F_3}_a\otm E_3) = \begin{pmatrix} 1 & 2 & \frac{1}{2} & 8 & 4 & -2 & 8 & -8 & 1 \end{pmatrix}
	= \b^{F_3}\circ (E_3\otm \rho^{F_3}_b),
	\]
whereas for \eqref{eq-e} we check that $\unit^{F_3}$ is an eigenvector of $\rho^{F_3}_a$ for the eigenvalue $1$.

	Let us compute the resulting invariants.
We find
\begin{align*}
    \tr(\rho^{F_3}_{G_{k,q,v}})
    	=2^{-|k|(v+q^2)}\big(1&+4(2^{|k|q^2}-1)(2^{|k|v}-1)+\\
    	&+2^{|k|(v+q^2)}+4(2^{|k|q^2}-1)^2(2^{|k|v}-1)^2+\\
    	&+2^{2+|k|(v+q^2)}(2^{|k|q^2}-1)(2^{|k|v}-1)+4^{|k|(v+q^2)}\big)
\end{align*}
and
\begin{align*}
    \tr(\rho^{F_3}_{H_{k,q,v}})
    	=2^{-|k|(q^2v+1)}\big(1&+4(2^{|k|q^2v}-1)(2^{|k|}-1)+\\
    	&+2^{|k|(q^2v+1)}+4(2^{|k|q^2v}-1)^2(2^{|k|}-1)^2+\\
    	&+2^{2+|k|(q^2v+1)}(2^{|k|q^2v}-1)(2^{|k|}-1)+4^{|k|(q^2v+1)}\big).
\end{align*}
Arguing as in the proof of Theorem~\ref{thm:F1} we see that these traces are always different. Using Lemma~\ref{lem:trace} we conclude that $F_3$ distinguishes each $Bun_{G_{k,q,v}}$ from $Bun_{H_{k,q,v}}$. 

Next, for the pair $G=\matrica{188 & 275\\121 & 177}$ and $H=\matrica{188 & 11\\3025 & 177}$ we use Wolfram Mathematica to find
\[
	\tr(\rho^{F_3}_G)=\frac{k_1}{2^{16}}\neq \frac{k_2}{2^{44}}=\tr(\rho^{F_3}_H)
\]
for some odd integers $k_1,k_2$, implying that these traces are distinct.

Similarly, for $i=21,51,53,55$ we compute:
\begin{align*}
	&\tr(\rho^{F_3}_{X_i})-\tr(\rho^{F_3}_{Y_i})=\frac{l_i}{2^{2i}},
\end{align*}
where $l_i$ are certain large numbers that we omit.

Finally, an easy computation gives us
\begin{align*}
	F_3(L(7,1)) =\counit^{F_3}\circ\rho^{F_3}_{\Lambda_1}\circ\unit^{F_3}
	& =145065/8\\
	& \neq 14295/2
		=\counit^{F_3}\circ\rho^{F_3}_{\Lambda_2}\circ\unit^{F_3}=F_3(L(7,2)),\\
	F_3(L(65,8)) =\counit^{F_3}\circ\rho^{F_3}_{\Lambda_8}\circ\unit^{F_3}
	& =114462647835/4096\\
	& \neq 111887115/64
		=\counit^{F_3}\circ\rho^{F_3}_{\Lambda_{18}}\circ\unit^{F_3}=F_3(L(65,18)),
\end{align*}
as claimed.
\end{proof}

\begin{rem}
	This restricted TQFT is unfortunately not faithful, as already the representation $\rho^{F_3}:\SL\to GL(3,\RR)$ factors through $PSL(2,\Z)$ by \cite{Tuba-Wenzl}.
\end{rem}

\appendix

\section{Appendix}
\label{sec:appendix}

Let $\calC^\b$ (resp.\ $\calC^\be$) be the symmetric strict monoidal category with a $T^\beta$-object (resp.\ $T^\be$-object) $\O$, freely generated by the empty set. We abbreviate the identity morphism $\mj_\O$ by $\mj$ and $\mj_{\O^{\otm m}}$ by $\mj_m$. Let
\[
\tau_{m,n}:\O^{\otm m} \otm \O^{\otm n} \ra \O^{\otm n} \otm \O^{\otm m}
\]
for $m,n\geq 0$ be the symmetry (if $n=0$, then $\O^{\otm n}$ is the unit object). An arrow built out of $\tau$'s with the help of $\circ$ and $\otm$ is called a $\tau$-\emph{term}. Every $\tau$-term from $\O^{\otm n}$ to $\O^{\otm n}$ corresponds to a permutation in the symmetric group on $n$ letters. The strictness of $\calC^\b$ (resp.\ $\calC^\be$) implies
\begin{equation}\label{strict1}
\tau_{m,0}=\tau_{0,m}=\mj_m.
\end{equation}

Consider the following four types of arrows of $\calC^\b$ (with $A\in\SL$ varying):
\[
	C_1\coloneqq\beta^\O\circ (\rho^\O_A\otm\mj), \quad
	C_2\coloneqq\rho^\O_A, \quad
	C_3\coloneqq(\rho^\O_A\otm\mj)\circ \gamma^\O, \quad 
	C_4\coloneqq\beta^\O\circ (\rho^\O_A\otm\mj)\circ \gamma^\O,
\]
and additional three types for $\calC^\be$:
\[
	C_0\coloneqq\counit^\O\circ\rho_A^\O\circ\unit^\O, \qquad C_5\coloneqq\rho_A^\O\circ\unit^\O, \qquad C_6\coloneqq\counit^\O\circ\rho_A^\O.
\]

\begin{dfn}
An arrow of $\calC^\b$ (resp.\ $\calC^\be$) is in \emph{normal form} when it is either $\mj_0$ or is obtained as the composition $\tau^t\circ C\circ \tau^s$, where $\tau^s$ and $\tau^t$ are $\tau$-terms and $C$ is a tensor product of arrows of the form $C_1-C_4$ (resp.\ $C_0-C_6$).
\end{dfn}

\begin{lem}\label{sulundar}
For every normal form $\nu=\tau^t\circ C\circ \tau^s: \O^{\otm n}\to \O^{\otm m}$ and every $j\in\{1,m-1\}$ there is a normal form $\tau^t_1\circ C_1\circ \tau^s_1$ equal to $\nu$ in which for some $i\in\{1,m-1\}$ the permutation corresponding to $\tau^t_1$ maps $i$ to $j$ and $i+1$ to $j+1$.
\end{lem}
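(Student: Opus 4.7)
The plan is a case analysis based on the ``block structure'' of the output positions of $C$. Writing $C = c_1\otm\cdots\otm c_r$ as a tensor product of generators $C_0,\dots,C_6$, each factor contributes a consecutive block of output positions, of size $0$ (for $C_0,C_1,C_4,C_6$), size $1$ (for $C_2,C_5$), or size $2$ (for $C_3$). Let $\sigma$ be the permutation induced by $\tau^t$, set $k\coloneqq\sigma^{-1}(j)$ and $k'\coloneqq\sigma^{-1}(j+1)$, and let $B,B'$ be the output blocks containing $k$ and $k'$ respectively. The goal is to produce an equivalent normal form in which $k$ and $k'$ become consecutive positions $i,i+1$ of a (possibly new) middle part, with $k$ sent to $j$ and $k'$ to $j+1$.

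The construction rests on two elementary moves that preserve the normal form. First, in any symmetric monoidal category two adjacent factors can be transposed via
\[
c_\alpha\otm c_{\alpha+1} \;=\; \tau\circ(c_{\alpha+1}\otm c_\alpha)\circ\tau,
\]
with the extra symmetries absorbed into $\tau^t$ and $\tau^s$ (which stay $\tau$-terms). Iterating, any permutation of the factors of $C$ is available, yielding a new normal form for $\nu$. Second, the two outputs of a single $C_3^A=(\rho^\O_A\otm\mj)\circ\gamma^\O$ can be internally swapped: by naturality of $\tau$, by \eqref{eq:tau-gamma}, and by \eqref{eq:Tb-object-gamma-jump},
\begin{align*}
\tau\circ C_3^A &\;=\; \tau\circ(\rho^\O_A\otm\mj)\circ\gamma^\O \;=\; (\mj\otm\rho^\O_A)\circ\gamma^\O\\
&\;=\; (\rho^\O_{JA^{-1}J}\otm\mj)\circ\gamma^\O \;=\; C_3^{JA^{-1}J},
\end{align*}
so replacing $C_3^A$ by $C_3^{JA^{-1}J}$ (still a $C_3$-type generator) amounts to transposing its two outputs, at the cost of post-composing $\tau^t$ with one elementary transposition.

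If $B=B'$, then $c_\alpha=C_3^A$ and $\{k,k'\}=\{i_0,i_0+1\}$ for some $i_0$; set $i=i_0$ directly when $k<k'$, and otherwise first apply the internal $C_3$-swap. If $B\neq B'$, the first move lets me reorder the factors of $C$ so that the factor containing $B$ sits immediately before (or after) the one containing $B'$; the two blocks then occupy $2$, $3$, or $4$ consecutive output positions. In each of the four subcases $(|B|,|B'|)\in\{(1,1),(1,2),(2,1),(2,2)\}$, a choice of relative order of the two factors, together with possibly one internal $C_3$-swap inside $B$ or $B'$ when it has size $2$, suffices to place $k$ and $k'$ at consecutive positions with $k$ first.

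The main difficulty is not conceptual but careful combinatorial bookkeeping across the subcases, and verifying that each composite move really yields another valid normal form (in particular that the modified $\tau^t,\tau^s$ remain $\tau$-terms, which follows immediately from the absorption discussed above). The argument is uniform for $\calC^\b$ and $\calC^\be$: the additional generators $C_0,C_5,C_6$ present in $\calC^\be$ have block sizes $0,1,0$ and behave exactly like $C_4,C_2,C_1$ respectively for the purposes of this analysis.
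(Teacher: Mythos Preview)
Your argument is correct and follows essentially the same approach as the paper: reorder the tensor factors of $C$ using naturality of the symmetry, and when a single $C_3$-factor needs its two outputs transposed, use the identity $\tau\circ C_3^A=C_3^{JA^{-1}J}$ to absorb the swap while staying in normal form. The paper presents this via a worked example and cites \eqref{eq:tau-gamma} and \eqref{eq:Tb-object-gamma} (the $D_a,D_b$ case), whereas you invoke the general consequence \eqref{eq:Tb-object-gamma-jump} directly and spell out the block-size case analysis; these are cosmetic differences only.
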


\begin{proof}
Note that every factor of the tensor product $C$ has $\O^{\otm k}$ as the target, for $k\leq 2$. The symmetry is a natural isomorphism. This helps to reorder the factors in $C$ and if necessary one uses the equalities \eqref{eq:tau-gamma} and \eqref{eq:Tb-object-gamma} in order to obtain the desired form. The following example illustrates this procedure. Let $j=1$ and let $\nu$ be the following normal form:
\[
(\tau_{3,1}\otm \mj)\circ \Big(((\rho_A\otm\mj)\circ\gamma)\otm\rho_B\otm ((\rho_{D_b}\otm\mj)\circ\gamma)\Big).
\]
By the properties of symmetry, this can be easily reordered into the form:
\[
(\mj\otm \tau_{1,3})\circ \Big(((\rho_{D_b}\otm\mj)\circ\gamma)\otm ((\rho_A\otm\mj)\circ\gamma)\otm\rho_B\Big).
\]
Then we use \eqref{eq:tau-gamma} and naturality of symmetry in order to obtain:
\[
\tau_{1,4}\circ \Big(((\mj\otm \rho_{D_b})\circ\gamma)\otm ((\rho_A\otm\mj)\circ\gamma)\otm\rho_B\Big).
\]
Finally, by using \eqref{eq:Tb-object-gamma} we obtain the desired normal form, for $i=2$:
\[
\tau_{1,4}\circ \Big(((\rho_{D_a}\otm\mj)\circ\gamma)\otm ((\rho_A\otm\mj)\circ\gamma)\otm\rho_B\Big).\qedhere
\]
\end{proof}

\begin{prop}\label{nf}
    Every arrow of $\calC^\b$ (resp.\ $\calC^\be$) has a normal form.
\end{prop}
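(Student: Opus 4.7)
The plan is to argue by structural induction on the way an arrow of $\calC^\b$ (resp.\ $\calC^\be$) is built from the generators $\mj$, $\beta^\O$, $\gamma^\O$, $\rho^\O_A$ (and, in the $\be$-case, $\unit^\O$, $\counit^\O$) together with symmetries $\tau_{m,n}$, using $\circ$ and $\otm$. Each generator admits an obvious normal form: $\rho^\O_A$ is $C_2$; $\beta^\O$ is $C_1$ with $A=E$; $\gamma^\O$ is $C_3$ with $A=E$; $\unit^\O$ is $C_5$ with $A=E$; $\counit^\O$ is $C_6$ with $A=E$; the identity $\mj_n$ is $(\rho^\O_E)^{\otm n}$ bracketed by identity $\tau$-terms (with $\mj_0$ in normal form by definition); and $\tau_{m,n}$ reads as $\tau_{m,n} \circ (\rho^\O_E)^{\otm(m+n)} \circ \mj_{m+n}$. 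The inductive case of tensor product is immediate: given normal forms $f_i = \tau^{t_i} \circ C^{(i)} \circ \tau^{s_i}$ for $i=1,2$, the arrow $f_1 \otm f_2 = (\tau^{t_1} \otm \tau^{t_2}) \circ (C^{(1)} \otm C^{(2)}) \circ (\tau^{s_1} \otm \tau^{s_2})$ is again in normal form.

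The heart of the argument is the composition case. Writing $g = \tau^{t_2} \circ C^{(2)} \circ \tau^{s_2}$ and $f = \tau^{t_1} \circ C^{(1)} \circ \tau^{s_1}$, the composite is $g \circ f = \tau^{t_2} \circ (C^{(2)} \circ \pi \circ C^{(1)}) \circ \tau^{s_1}$, where $\pi \coloneqq \tau^{s_2} \circ \tau^{t_1}$ is a single $\tau$-term. The task reduces to rewriting the bracketed expression in normal form, which I would carry out by a secondary induction on the combined number of factors of $C^{(1)}$ and $C^{(2)}$. In the inductive step, Lemma~\ref{sulundar} lets me rearrange $C^{(1)}$ (and symmetrically $C^{(2)}$) so that two adjacent output strands of $C^{(1)}$ emanate from a single factor $D$ and are matched by $\pi$ to two adjacent input strands of a single factor $D'$ of $C^{(2)}$. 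Naturality of the symmetry then lets me tensor the rest of $C^{(1)} \otm C^{(2)}$ aside, locally reducing the problem to a composite $D' \circ \pi_0 \circ D$ for a small crossing $\pi_0$ connecting two factor-types from $\{C_0, \dots, C_6\}$.

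The remaining work is a finite case analysis of these local composites, each resolved by one identity of $T^\b$- or $T^\be$-objects: successive $\rho$'s combine by $\rho^\O_B \circ \rho^\O_A = \rho^\O_{BA}$; a $\rho$ feeding the ``wrong'' port of $\beta^\O$ or $\gamma^\O$ is absorbed via the $\beta$-jump identity~\eqref{eq:Tb-object-beta-jump} or its $\gamma$-dual~\eqref{eq:Tb-object-gamma-jump}; a $C_3$-cup feeding into a $C_1$-cap either straightens via the snake identity~\eqref{eq:snake} to a single $C_2$, or else closes off a loop, yielding a $C_4$ (a torus bundle, by Proposition~\ref{prop:Bun}) tensored with a leftover $C_2$; and in the $\be$-case the additional links involving $\unit^\O$, $\counit^\O$ collapse by~\eqref{eq:unit-counit}, \eqref{eq:Da-e}, and Remark~\ref{rem:beta-e,eta-gamma}. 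In every case the merged factor replaces two, strictly decreasing the count, so the secondary inductive hypothesis applies.

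The main obstacle will be the combinatorial bookkeeping: one must enumerate all interaction patterns of adjacent factors, and at each merging step verify that the change can be absorbed into updated bracketing $\tau$-terms without disturbing normal-form structure elsewhere. Lemma~\ref{sulundar} is what makes this manageable, as it provides the freedom to always isolate a convenient adjacent pair to process. The $\calC^\be$ case introduces no essentially new difficulty beyond additional sub-cases for $\unit^\O$ and $\counit^\O$.
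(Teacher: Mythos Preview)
Your overall strategy---structural induction, with a secondary induction on factor count for the composition step---is sound in spirit and close to the paper's, but there is a genuine gap in how you invoke Lemma~\ref{sulundar}. That lemma only guarantees that for a chosen target pair $(j,j+1)$ one can rewrite the normal form so that $\tau^t$ sends some \emph{adjacent} pair $(i,i+1)$ to $(j,j+1)$; it does \emph{not} guarantee that positions $i$ and $i+1$ are both outputs of a single factor of $C$. Your claim that one can always arrange ``two adjacent output strands of $C^{(1)}$ emanating from a single factor $D$, matched by $\pi$ to two adjacent input strands of a single factor $D'$ of $C^{(2)}$'' is therefore false in general. For instance, take $C^{(1)}=\rho^\O_A\otm\rho^\O_B$ (two $C_2$ factors, each with a single output) and $C^{(2)}=\beta^\O\circ(\rho^\O_C\otm\mj)$ (a single $C_1$ factor with two inputs): no factor of $C^{(1)}$ has two outputs, so your local reduction to a composite $D'\circ\pi_0\circ D$ between single factors never applies here, and your listed case analysis does not cover this situation. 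Dually, a single $C_3$ in $C^{(1)}$ may have its two outputs routed by $\pi$ to two distinct $C_2$ factors of $C^{(2)}$.

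The paper sidesteps exactly this difficulty by a simpler inductive decomposition. Rather than structural induction on $\otm$ and $\circ$, it first uses bifunctoriality to write an arbitrary arrow as a composite of ``elementary slices'' $\mj_m\otm f\otm\mj_n$ with $f$ a single generator, and then inducts on the number of slices: one only ever has to absorb a single generator into an existing normal form $\tau^t\circ C\circ\tau^s$. The only delicate case is $f=\beta^\O$, and there Lemma~\ref{sulundar} is used precisely to bring the two $\beta$-inputs onto adjacent outputs $(i,i+1)$ of $C$; the subsequent case analysis (the paper's seven cases) then explicitly allows those two outputs to come from one \emph{or two} factors of $C$. Your approach can be repaired either by enlarging your local case analysis to include the two-factor situations, or---more cleanly---by adopting this one-generator-at-a-time reduction, which is the idea your proposal is missing.
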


\begin{proof}
We give a proof for $\calC^\be$, which is more general. By relying on bifunctoriality of $\otm$, every arrow of this category is equal to a composition of arrows of the form $\mj_m\otm f\otm \mj_n$, where $f\in\{\mj_k, \tau, \rho_A^\O, \beta^\O,\gamma^\O, \unit^\O, \counit^\O\}$. Then an inductive argument boils down the proof to the case when the arrow is of the form $(\mj_m\otm f\otm \mj_n)\circ(\tau^t\circ C\circ \tau^s)$.

The cases when $f$ is $\mj_k$ or $\tau$ are trivial. If $f$ is $\rho_A^\O$, then we rely on naturality of symmetry, and if necessary on \eqref{eq:Tb-object-gamma}, in order to obtain a normal form. If $f$ is $\gamma^\O$ or $\unit^\O$, then (\ref{strict1}), together with naturality of symmetry, gives a normal form. If $f$ is $\counit^\O$, then the naturality of symmetry moves it through $\tau^t$ until it reaches $C$ and becomes confronted with a $C_2$ or a $C_3$ or a $C_5$ factor in it. In the case of $C_2$ or $C_5$ factor we are done. In the case of $C_3$ factor, we use Remark~\ref{rem:beta-e,eta-gamma} and, if necessary, we rely on \eqref{eq:Tb-object-gamma}.

If $f$ is $\beta^\O$, then we apply Lemma~\ref{sulundar}, symmetric monoidal coherence and naturality of symmetry in order to move this $f$ through $\tau^t$ until it reaches $C$ and becomes confronted with either (1) a pair of $C_2$ factors, or (2) one $C_2$ and one $C_3$ factor, or (3) one $C_2$ and one $C_5$ factor, or (4) one $C_3$ factor, or (5) a pair of $C_3$ factors, or (6) one $C_3$ and one $C_5$ factor, or (7) a pair of $C_5$ factors. We have ``ready to use'' equalities for all these cases, which is illustrated here just for the case (5). Let, for example, our $(\mj_m\otm f\otm \mj_n)\circ \tau^t\circ C\circ \tau^s$ be:
\[(\beta\otm\mj_3)\circ
(\tau_{3,1}\otm \mj)\circ \Big(((\rho_A\otm\mj)\circ\gamma)\otm\rho_B\otm ((\rho_{D_b}\otm\mj)\circ\gamma)\Big).
\]
By Lemma~\ref{sulundar} this is equal to:
\[
(\beta\otm\mj_3)\circ\tau_{1,4}\circ \Big(((\rho_{D_a}\otm\mj)\circ\gamma)\otm ((\rho_A\otm\mj)\circ\gamma)\otm\rho_B\Big).
\]
Symmetric monoidal coherence entails $\tau_{1,4}=(\mj_2\otm\tau_{1,2})\circ(\tau_{1,2}\otm \mj_2)$, which enables $\beta$ to move through this $\tau$-term by bifunctoriality of $\otm$ and naturality of $\tau$, giving:
\[
\tau_{1,2}\circ(\mj\otm \beta\otm\mj_2)\circ \Big(((\rho_{D_a}\otm\mj)\circ\gamma)\otm ((\rho_A\otm\mj)\circ\gamma)\otm\rho_B\Big).
\]
Then $\beta$ is coupled with one $\gamma$ in order to apply \eqref{eq:snake}:
\[
\tau_{1,2}\circ \Big(\big((\rho_{D_a}\otm\mj)\circ ([(\mj\otm\beta) \circ (\gamma\otm\mj)]\otm\mj)\circ (\rho_A\otm\mj)\circ\gamma\big)\otm \rho_B\Big),
\]
which delivers the normal form:
\[
\tau_{1,2}\circ \Big(\big((\rho_{D_a\cdot A}\otm\mj)\circ\gamma\big)\otm \rho_B\Big).\qedhere
\]
\end{proof}

As a consequence of Lemma~\ref{lem:T-Tb}, there are two functors $F^\beta:\calC^\b\to \bMCG$ and $F^\be:\calC^\be\to \beMCG$, which map $\O$ to the torus $T$ and respect the symmetric monoidal structure with $T^\beta$ (resp.\ $T^{\be}$)-object. We formulate the following lemma for $\calC^\be$, but one has an analogous result for $\calC^\b$.

\begin{lem}\label{1component}
If $f_1$ and $f_2$ are two arrows of $\calC^\be$ such that $F^\be(f_1)=F^\be(f_2)$ and this cobordism is connected, then $f_1=f_2$.
\end{lem}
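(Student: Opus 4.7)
\textbf{Proof plan for Lemma~\ref{1component}.}

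The strategy is to put $f_1,f_2$ in normal form via Proposition~\ref{nf}, use connectedness to force each normal form to contain a single block, and then case-analyse by block type, matching the image under $F^\be$ to a canonical representative in $\calC^\be$ via the relations already established.

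First I would put $f_i=\tau^{t_i}\circ C^{(i)}\circ\tau^{s_i}$ in normal form, so that $C^{(i)}$ is a tensor product of blocks $C_0,\dots,C_6$. The key observation is that $F^\be$ sends $\tau$-terms to disjoint unions of cylinders (bijections on path-components of the underlying 3-manifold), and $\otm$ to disjoint union; each $C_k$ is sent to a connected 3-manifold ($T\tm I$, $D^2\tm S^1$, a torus bundle, or a lens space). Hence connectedness of $F^\be(f_1)=F^\be(f_2)$ forces both $C^{(1)}$ and $C^{(2)}$ to consist of a single block, and the source $\O^{\otm n}$ and target $\O^{\otm m}$ of $f_1,f_2$ therefore lie in a short list of possibilities: $(n,m)\in\{(0,0),(0,1),(1,0),(1,1),(0,2),(2,0)\}$, with the block type in each case restricted accordingly.

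Next I would dispose of the cases in which the relevant $\tau$-terms are forced to be identities by \eqref{strict1}. For $(1,1)$ the block is $C_2=\rho^\O_A$ and both $\tau$-terms are $\mj$; equal images give $\Cyl_{A_1}=\Cyl_{A_2}$, whence $A_1=A_2$ by Corollary~\ref{cor:Cyl}. For $(0,1)$ the block is $C_5=\rho^\O_A\circ\unit^\O$; equal images force $A_1=A_2 D_a^m$ for some $m\in\Z$ by Lemma~\ref{cyl-epsilon}, and then $\rho^\O_{A_2 D_a^m}\circ\unit^\O=\rho^\O_{A_2}\circ\unit^\O$ in $\calC^\be$ by iterated application of \eqref{eq:Da-e}; the case $(1,0)$ is analogous using \eqref{eq:eta-Db}. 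For the cases $(0,2)$ and $(2,0)$, exactly one $\tau$-term can be non-trivial (either $\mj_2$ or $\tau_{1,1}$); I would use naturality of $\tau$ together with $\tau\circ\g^\O=\g^\O$ and $\b^\O\circ\tau=\b^\O$ to absorb the swap, reducing each normal form to $(\rho^\O_A\otm\mj)\circ\g^\O$ or $\b^\O\circ(\rho^\O_A\otm\mj)$ respectively, possibly with $A$ replaced by $JA^{-1}J$ via \eqref{eq:Tb-object-beta-jump} or \eqref{eq:Tb-object-gamma-jump}. Once in canonical form, Corollary~\ref{beta-gamma-cyl} identifies the matrix from the image.

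The heart of the argument is the case $(0,0)$, where the block is either $C_0$ (giving a lens space, by Lemma~\ref{lem:cob-is-lens-space}) or $C_4$ (giving a torus bundle, by Proposition~\ref{prop:Bun}). The two sub-families are distinguished topologically by the fundamental group ($\Z/p$ finite versus containing $\Z^2$ from the fibre), so $F^\be(f_1)=F^\be(f_2)$ forces $f_1,f_2$ to be of the same type. If both are $C_0(A_i)$, then $L(p_1,q_1)\cong L(p_2,q_2)$ together with Theorem~\ref{thm:lens-spaces} gives lens-inseparability of $A_1,A_2$, whence $f_1=f_2$ in $\calC^\be$ by Proposition~\ref{prop:abstr-lens-space}. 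If both are $C_4(A_i)$, then $Bun_{A_1}\cong Bun_{A_2}$ together with Theorem~\ref{thm:torus-bundles} gives that $A_2$ is conjugate in $\SL$ to $A_1$ or to $JA_1^{-1}J$, and Proposition~\ref{prop:classify-new-proof} converts this into an equality in $\calC^\be$.

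The main obstacle I expect is the organisational one: one must check that the normal-form reductions in the sub-cases $(0,2)$, $(2,0)$, and $(0,0)$ are compatible with the ambiguities in choosing $\tau$-terms and in the matrix representative, and that in each case the equality of images under $F^\be$ is \emph{exactly} detected by the relations already imposed by the $T^{\be}$-object axioms (so that equal images indeed force equal abstract morphisms). The hard half of this pairing up — namely that every topological identification of the images can be recovered abstractly — is precisely what Propositions~\ref{prop:abstr-lens-space} and \ref{prop:classify-new-proof} are designed to supply.
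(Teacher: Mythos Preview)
Your proposal is correct and follows essentially the same approach as the paper: reduce to normal form, use connectedness to get a single block, strip off the $\tau$-terms using the $T^{\be}$-object relations, and then case-split according to block type with exactly the same pairing of topological classification results (Corollaries~\ref{cor:Cyl} and~\ref{beta-gamma-cyl}, Lemma~\ref{cyl-epsilon}, Theorems~\ref{thm:torus-bundles} and~\ref{thm:lens-spaces}) against their abstract counterparts (the $T^{\be}$-axioms, Propositions~\ref{prop:classify-new-proof} and~\ref{prop:abstr-lens-space}). Your organisation by source/target pair $(n,m)$ rather than by block label $C_i$ is cosmetic, and your explicit fundamental-group argument separating $C_0$ from $C_4$ in the $(0,0)$ case spells out what the paper dismisses in one sentence.
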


\begin{proof}
By Proposition~\ref{nf}, we may assume that $f_1$ and $f_2$ are in normal form. Since the cobordism $F^\be(f_1)=F^\be(f_2)$ is connected we have that either $f_1=f_2=\mj_0$, or both $f_1$, $f_2$ contain exactly one factor among $C_0-C_6$. From the equality of cobordisms $F^\be(f_1)$ and $F^\be(f_2)$, one easily concludes that the unique factors in $f_1$, $f_2$ are of the same type. Moreover, by appealing to \eqref{eq:tau-beta}, \eqref{eq:tau-gamma}, \eqref{eq:Tb-object} and \eqref{eq:Tb-object-gamma}, we may assume that both $f_1$ and $f_2$ are free of $\tau$-terms, hence both are just arrows of type $i$, for $i\in\{0,\ldots,6\}$. Then the table

\begin{center}
\begin{tabular}{|l|c|}
\hline
The type of $f_1$ and $f_2$ & $f_1=f_2$ follows from: \\
\hline
	$C_0=\counit^\O\circ\rho_A^\O\circ\unit^\O$ & Theorem~\ref{thm:lens-spaces} and Proposition~\ref{prop:abstr-lens-space} \\
\hline
	$C_1=\beta^\O\circ (\rho^\O_A\otm\mj)$ 		& Corollary~\ref{beta-gamma-cyl} \\
\hline
	$C_2=\rho^\O_A$								& Corollary~\ref{cor:Cyl} \\
\hline
	$C_3=(\rho^\O_A\otm\mj)\circ \gamma^\O$ 	& Corollary~\ref{beta-gamma-cyl} \\
\hline
	$C_4=\beta^\O\circ (\rho^\O_A\otm\mj)\circ \gamma^\O$ & Corollary~\ref{cor:Bun-conj} and Proposition~\ref{prop:classify-new-proof} \\
\hline
	$C_5=\rho_A^\O\circ\unit^\O$	 & Lemma~\ref{cyl-epsilon} and~
	\eqref{eq:Da-e} \\
\hline
	$C_6=\counit^\O\circ\rho_A^\O$	 & Lemma~\ref{eta-cyl} and~\eqref{eq:eta-Db} \\
\hline
\end{tabular}
\end{center}
finishes the proof.
\end{proof}

\begin{thm}\label{thm:freely2}
	The functors $F^\beta:\calC^\b\to \bMCG$ and $F^\be:\calC^\be\to \beMCG$ are isomorphisms of categories.
\end{thm}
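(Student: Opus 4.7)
The plan is to prove that each of $F^\b$ and $F^\be$ is bijective on objects and bijective on morphisms. Bijectivity on objects is immediate: both $\calC^\b$ (resp.\ $\calC^\be$) and $\bMCG$ (resp.\ $\beMCG$) have objects indexed by $\N$ via $n\mapsto\O^{\otm n}$ and $n\mapsto T^{\otm n}$, and the functor realises this bijection. Surjectivity on morphisms is equally short: $\bMCG$ was defined in Definition~\ref{dfn:bMCG} as the symmetric monoidal subcategory of $\Cob$ generated by $\mj,\b,\g,\Cyl_A$, each of which is in the image of $F^\b$ by construction (and similarly for $F^\be$ with the extra generator $\unit$).

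The heart of the argument is injectivity on morphisms. Given $f_1,f_2$ with $F^\be(f_1)=F^\be(f_2)$, apply Proposition~\ref{nf} to put each in normal form $f_i=\tau^{t_i}\circ C^{(i)}\circ\tau^{s_i}$, where $C^{(i)}$ is a tensor product of blocks of types $C_0,\dots,C_6$. The crucial observation is that each basic block $C_j$ has \emph{connected} underlying 3-manifold (a lens space, a thick torus, a solid torus, or a torus bundle), so the decomposition $C^{(i)}=\bigotimes_k C^{(i)}_{j_k}$ corresponds exactly to the decomposition of $F^\be(f_i)$ into connected components, together with the data of which of the boundary tori of each component is designated as source/target. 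Since $F^\be(f_1)=F^\be(f_2)$ share the same connected components, the multisets of types appearing in $C^{(1)}$ and $C^{(2)}$ must coincide. Using naturality of symmetry and bifunctoriality of $\otm$ one may reorder the factors of $C^{(1)}$ to match the order in $C^{(2)}$, at the cost of absorbing the swaps into the $\tau^{s_1}$ and $\tau^{t_1}$ blocks. Once the orderings agree, Lemma~\ref{1component} applied componentwise shows that each corresponding pair of factors in $C^{(1)}$ and $C^{(2)}$ is equal in $\calC^\be$.

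It remains to see that the boundary permutations $\tau^{s_i},\tau^{t_i}$ are determined by $F^\be(f_i)$. This is because once the connected components of the cobordism are matched and each component has its source/target boundary tori labelled by positions in $\O^{\otm n}$ and $\O^{\otm m}$, the external permutations are fixed by the parametrisation of $\partial F^\be(f_i)$. Putting everything together yields $f_1=f_2$ in $\calC^\be$, and the same argument with fewer block types proves injectivity of $F^\b$. The main obstacle is the careful bookkeeping in the reordering step: one must verify that when several connected components of the cobordism are of the same type, the different ways of matching them up (corresponding to permutations of identical factors in the tensor product) are indeed absorbed without ambiguity into the $\tau$-terms, using naturality of the symmetry together with the identities~\eqref{eq:tau-beta}, \eqref{eq:tau-gamma}, \eqref{eq:Tb-object} and~\eqref{eq:Tb-object-gamma} that were already exploited in Lemma~\ref{sulundar}.
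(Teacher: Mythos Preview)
Your proposal is correct and follows essentially the same strategy as the paper: bijectivity on objects is immediate, surjectivity on arrows is by definition of $\bMCG$ and $\beMCG$, and injectivity is obtained by putting both arrows in normal form (Proposition~\ref{nf}), matching the tensor factors with the connected components of the common cobordism, and then invoking Lemma~\ref{1component} componentwise.

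The one organisational difference worth noting is how the $\tau$-terms are handled. You reorder $C^{(1)}$ to match $C^{(2)}$, absorb the swaps into $\tau^{s_1},\tau^{t_1}$, and then argue that the resulting $\tau$-terms are ``determined by $F^\be(f_i)$''. As you yourself flag in the last paragraph, this determination is only up to the internal symmetries of the factors (e.g.\ $\tau\circ(\rho^\O_A\otm\mj)\circ\g^\O=(\rho^\O_{JA^{-1}J}\otm\mj)\circ\g^\O$), so the sentence ``the external permutations are fixed by the parametrisation of $\partial F^\be(f_i)$'' is slightly imprecise as stated. The paper sidesteps this by instead pre- and post-composing \emph{both} $f_1$ and $f_2$ by the \emph{same} $\tau$-terms (chosen from the common component/end structure of the cobordism $F^\be(f_1)=F^\be(f_2)$), reducing directly to a pure tensor product on each side; since these $\tau$-terms are isomorphisms, equality of the tensor products yields $f_1=f_2$ without a separate argument about the permutations. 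This is a cosmetic streamlining rather than a different idea, and your acknowledgement of the absorption step via \eqref{eq:tau-beta}, \eqref{eq:tau-gamma}, \eqref{eq:Tb-object}, \eqref{eq:Tb-object-gamma} shows you have the right picture.
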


\begin{proof}
	We concentrate again on $F^\be$. It is evident that this functor is bijective on objects since it maps $\O^{\otm n}$ to $T^{\otm n}$. Moreover, it is clearly surjective on arrows. To show injectivity, let $f_1,f_2$ be two arrows such that $F^\be(f_1)=F^\be(f_2)$. If this cobordism is connected, then we apply Lemma~\ref{1component}. If it is not connected, then there is a bijective correspondence between its $m\geq 2$ components and $m$ factors in normal forms for $f_1$ and $f_2$.

	Since the factors corresponding to one component have the same ``ends'' in the sources and targets, by pre- and post-composing the normal forms with adequate $\tau$-terms (isomorphisms), we may assume that the normal form of $f_i$ is $f^1_i\otm\ldots\otm f^m_i$, where $f^j_1$ and $f^j_2$ correspond to the same component in $F^\be(f_1)=F^\be(f_2)$. By Lemma~\ref{1component}, we have that $f^j_1=f^j_2$, hence $f_1=f_2$.
\end{proof}

\vspace{1cm}

%%%%%%%%

%%%%%%%%

\end{document}